\documentclass[12pt, a4paper]{amsart}
\usepackage{thmtools}

\usepackage{marginnote}
\usepackage{listings}
\usepackage{amssymb,amsmath,amsthm}
\usepackage{tikz-cd}
\usetikzlibrary{arrows}
\usepackage{graphicx}
\usepackage[hidelinks]{hyperref}
\usepackage{mathtools}
\usepackage[titletoc,title]{appendix}
\usepackage{graphicx}
\usepackage{caption}
\usepackage{subcaption}
\usepackage{blindtext}
\usepackage[titletoc,title]{appendix}
\usepackage[english]{babel}
\usepackage{mathrsfs}
\usepackage{cite}
\usepackage{mathabx}
\usepackage{microtype}
\usepackage[capitalize]{cleveref}

\newcommand{\F}{\mathbb{F}}

\newtheorem{theorem}{Theorem}[section]
\newtheorem{lemma}[theorem]{Lemma}
\newtheorem{corollary}[theorem]{Corollary}
\newtheorem{proposition}[theorem]{Proposition}
\theoremstyle{definition}

\newtheorem{remark}[theorem]{Remark}

\newtheorem{question}[theorem]{Question}

\newtheorem*{theorem*}{Theorem}
\newtheorem*{conjecture*}{Conjecture}

\newcommand{\Z}{\mathbb{Z}}

\makeatletter
\def\Ddots{\mathinner{\mkern1mu\raise\p@
\vbox{\kern7\p@\hbox{.}}\mkern2mu
\raise4\p@\hbox{.}\mkern2mu\raise7\p@\hbox{.}\mkern1mu}}

\makeatother

\newcommand{\normal}[1]{\langle\!\langle #1 \rangle\!\rangle}

\DeclareSymbolFontAlphabet{\amsmathbb}{AMSb}

\DeclareMathOperator{\rk}{rk}

\DeclarePairedDelimiter\abs{\lvert}{\rvert}

\makeatletter
\let\oldabs\abs
\def\abs{\@ifstar{\oldabs}{\oldabs*}}

\newcounter{cases}
\newcounter{subcases}[cases]

\newcounter{marcocomments}

\newcounter{bobcomments}

\begin{document}

\title[Complete rewriting systems for graphs of free groups]{
Finite complete rewriting systems for graphs of free groups with 
applications to free-by-cyclic, one-relator, and three-manifold groups}
\author[R.~D.~Gray]{Robert D. Gray}
\address{School of Engineering, Mathematics and Physics, University of East Anglia, Norwich NR4 7TJ, UK}
\email{Robert.D.Gray@uea.ac.uk}

\author[M. Linton]{Marco Linton}
\address{Instituto de Ciencias Matem\'{a}ticas, CSIC-UAM-UC3M-UCM, Madrid, Spain}
\email{marco.linton@icmat.es}

\thanks{This research was supported by the EPSRC Fellowship grant EP/V032003/1 ‘Algorithmic, topological and geometric aspects of infinite groups, monoids and inverse semigroups’. The second author was supported by the grant 202450E223 (Impulso de líneas científicas estratégicas de ICMAT)}


\maketitle

\begin{abstract}
We prove that any finite graph of finitely generated free groups admits a finite complete rewriting system after possibly taking a free product with a free group of rank two. As a corollary of our main result, we obtain that any HNN-extension of a finitely generated free group over a finitely generated subgroup admits a finite complete rewriting system. The proof of this result involves establishing a combination theorem for finite complete rewriting systems of graphs of groups. We then use this result, together with other tools, to give partial solutions to several fundamental open problems about finite complete rewriting systems for hyperbolic, one-relator, fully residually free, and three-manifold groups. 
In particular we prove that if $G = \langle \F, t \mid t^{-1}ft = \psi(f), \, \forall f\in \F\rangle$ is the mapping torus of an injective endomorphism $\psi\colon \F\to \F$ of a free group $\F$ (of possibly infinite rank) then every finitely generated subgroup of $G$ admits a finite complete rewriting system. It follows that any finitely generated virtually free-by-cyclic group, and any finitely generated subgroup of such a group, admits a finite complete rewriting system. 
We apply this to show that every finitely generated subgroup of a locally quasi-convex hyperbolic and virtually compact special group admits a finite complete rewriting system. This includes all one-relator groups with torsion (and all their finitely generated subgroups) and all hyperbolic fully residually free groups. Moreover, we show there is an algorithm that computes a finite complete rewriting system for any such group, given a presentation for its containing group and a finite list of generators for the subgroup. We also prove that for every compact three-manifold $M$, the group $\pi_1(M) \ast \Z$ admits a finite complete rewriting system. Furthermore, we show that the fundamental group of any compact three-manifold is autostackable and thus has a rational cross section and admits a bounded regular convergent prefix-rewriting system.  
\end{abstract}

\section{Introduction and summary of results}

A presentation for a group may be viewed as a rewriting system which in some cases can be used to give solutions to fundamental algorithmic problems for the group. For example, if the rewriting system is finite and complete (also called convergent, which means that it is both confluent and terminating) then this gives an algorithm for computing a regular set of normal forms for the elements of the group and thus gives a solution to the word problem. Once a finite complete rewriting system for a group is known, the algorithm for computing normal forms using the system is then easy to implement on a computer. Rewriting methods in group theory have a rich history going back to classical work of Dehn and Thue in the early 20th-century. Numerous classical algorithms in group theory may be viewed as string rewriting processes e.g. the Nielsen method in free groups, Dehn's algorithm in hyperbolic groups, and Tits' algorithm in Coxeter groups. As well as providing a nice solution to the word problem, and a regular set of normal forms, complete rewriting systems for groups have also been applied to prove other results e.g. in \cite{Grigorchu1997} they use a particular finite complete rewriting systems for surface groups to explicitly compute the complete growth functions of surface groups. Rewriting systems also have important applications in rings and algebras e.g. the theory of Gr\"{o}bner--Shirshov bases and Buchberger’s algorithm for computing normal forms and solving ideal-membership problems; see \cite{AdamsGrobner}. 

The question of which groups admit finite complete presentations has received a lot of attention in the literature. Finite complete rewriting systems have been constructed for Baumslag--Solitar groups \cite[p429]{HoltBook}, surface groups \cite{LeChenadec1986, hermiller1994rewriting, Grigorchu1997}, torus knot groups \cite{Dekov1997}, alternating knot groups \cite{Chouraqui2009}, virtually abelian groups, polycyclic groups, and constructible solvable groups \cite{Groves1993}, 
Coxeter groups of large type \cite{hermiller1994rewriting}, 
Artin groups of finite type \cite{Hermiller1999Second}, 
right-angled Artin groups  \cite{Hermiller1995}, iterated Baumslag--Solitar (Gersten) groups 
\cite{hermiller2001measuring}, and the Greendlinger group\cite{Otto1984}. 

However, many fundamental questions about which groups admit finite complete remain open. Some key open questions in this area include the following: 
\begin{enumerate} 
\item
Do all hyperbolic groups have finite complete rewriting systems? See \cite[Problem~2.6]{Diekert2010}.
\item 
Does every one-relator group have a finite complete rewriting system? See \cite{Otto1997}, \cite[Problem 2.9]{Diekert2010} and \cite[Section~1.8.3.]{linton2025theory}.
\item
Does every finitely generated fully residually free group have  a finite complete rewriting system? See 
\cite[Problem~2.7]{Diekert2010}.
\item
Does every finitely generated three-manifold group have a finite complete rewriting system? See \cite{Hermiller1999}.
\end{enumerate}
In this paper we shall add the class of fundamental groups of finite graphs of finitely generated free groups to the list of groups admitting finite complete rewriting systems, with a caveat. Then we shall show how our result may be applied to give partial solutions to each of the four open problems listed above.

There will be a slight technicality which will arise in the proof of our main theorem which means we actually establish \emph{stably} finite complete rewriting systems: that is, $G$ stably admits a finite complete rewriting system if $G*F_k$ admits a finite complete rewriting system for some $k\geqslant 0$. More precisely, we shall prove:

\begin{restatable}{theorem}{MainTheoremA}
\label{thm:main_free}
If $G$ splits as a finite graph of finitely generated free groups, then $G$ stably admits a finite complete rewriting system. More precisely, if $\Gamma$ is the underlying graph of the graph of groups, then:
\begin{enumerate}
\item $G*F_k$ admits a finite complete rewriting system, where $k = \min\{2, |V(\Gamma)| - 1\}$.
\item $G*\Z$ admits a finite complete rewriting system if $G$ contains a proper finite index subgroup.
\end{enumerate} 
\end{restatable}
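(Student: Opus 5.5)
The plan is to prove a combination theorem for finite complete rewriting systems over graphs of groups and then feed it free vertex groups with suitably chosen complete systems. The combination theorem should say the following. Let $G$ be the fundamental group of a finite graph of groups with a maximal tree $T$. Suppose each vertex group $G_v$ has a finite complete rewriting system over generators $A_v$. Suppose further that, for each edge $e$, the images of the edge group in $G_v$ are \emph{well-placed}: the irreducible words of $G_v$ factor as (irreducible word of the edge subgroup)$\cdot$(irreducible word representing a coset representative), and the set of such right transversal words is closed under taking prefixes, or is at least a regular, suitably closed set. Then $G$ admits a finite complete rewriting system. The model is the classical construction for amalgams and HNN extensions via Bass--Serre normal forms $g_0 t_1 g_1 \cdots t_n g_n$ with each $g_i$ a transversal word. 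The new rules are of the form $t^{-1} w\, t \to \psi(w)$, where $w$ ranges over a finite generating set of the edge group, together with rules pushing generator letters across stable letters. Termination would be shown with a recursive-path or weight ordering that counts stable letters first and then uses the vertex orderings. Confluence would be checked on the finitely many critical pairs, which reduce to the compatibility of the transversal with the rewriting in each vertex group.

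The second step is to realize these hypotheses for free vertex groups with finitely generated edge subgroups. For a finitely generated subgroup $H\le F$, Stallings folding gives a finite core graph, and from it a Schreier-type transversal for $H$ in $F$ that is prefix-closed and regular. The trouble is that the edge group's generators must themselves appear as rewriting-irreducible letters. The natural fix is to change the generating set of $F$, adding letters for a basis of $H$, and to orient the rules so that words in $H$ are rewritten into the new letters. Several edge groups at the same vertex cannot in general simultaneously be made into free factors or otherwise compatible. Here the stabilization enters: taking a free product with $\Z$ or $F_2$ provides extra letters. These letters allow each vertex group to be replaced by a free group in which all incident edge subgroups embed in a controlled, "letter-like" way, for example via a conjugation trick such as $H\mapsto xHx^{-1}$ with $x$ a new free letter, so that the images are malnormal-ish and have free-factor normal forms.

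For the count $k=\min\{2,|V(\Gamma)|-1\}$, I would first collapse along the maximal tree as far as possible. With a single vertex, $G$ is a multiple HNN extension of one free group and no stabilization is needed, since one can choose an ordering making each $t_i$ conjugation reduce. With more vertices, each vertex needs an auxiliary free letter. I would show that two letters, or one letter per non-base vertex, suffice by reusing the same added letters at every vertex after conjugating them into the different vertex groups. For part (2), if $G$ has a proper finite index subgroup, the plan is to use a finite-index normal subgroup $K$, or rather the map $G\to$ finite quotient, to rewrite $G*\Z$ as a graph of free groups with one fewer required stabilizing letter. Alternatively, one could present $G*\Z$ as an HNN-type extension of a graph with fewer vertices in which the extra $\Z$ acts as a stable letter realizing the needed twist.

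The main obstacle I expect is confluence in the combination theorem. In particular, it is the overlap between rules of the form $t^{-1}wt\to\psi(w)$ with $w$ in the edge group and the vertex group's own rules, when $w$ is not a single letter. Ensuring that the edge subgroup's normal forms are exactly the words on designated generating letters is what forces the change of generators and the stabilization. I would attack this first by trying to impose that every edge subgroup is generated by a subset of the vertex generating set, with all other rules not touching those letters on the left.
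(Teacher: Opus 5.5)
\textbf{The gap is at the vertex level, and it is the point you flag yourself.} Your combination theorem needs each vertex system to be ``well-placed'' with respect to all incident edge subgroups simultaneously. This is essentially the paper's \emph{relative} notion: irreducible words split as an edge-group word followed by a coset-representative word. You concede this cannot in general be arranged in a free group, and your proposed repair does not supply it. Malnormality and being a free factor are conjugation-invariant, so replacing $H$ by $xHx^{-1}$ with $x$ a new free letter does not by itself make $H$ any more letter-like; its Stallings graph is that of $H$ with one $x$-edge attached.

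Your single-vertex claim also contradicts your own obstruction. A single HNN extension $A*_C$ already has the two edge subgroups $C$ and $\phi(C)$ at one vertex. Choosing an ordering in which $t^{-1}wt\to\psi(w)$ decreases gives termination only, not confluence, i.e.\ not uniqueness of what may follow a stable letter. So the case $|V(\Gamma)|=1$ (\cref{cor:HNN}), and everything built on it, is unproved.

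Two further moves also fail. ``Collapsing along the maximal tree'' turns vertex groups into amalgams of free groups, which are no longer free. And a combination theorem aimed directly at $\pi_1(\mathcal{G})$ has no stable letter on tree edges across which to push edge-group elements; the paper leaves that version as an open question.

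\textbf{The missing idea is to weaken the vertex hypothesis} so that it holds for the standard free-reduction system and any finite family of finitely generated subgroups at once, by letting an auxiliary letter carry the coset information. A system is \emph{compatible} with $H$ if the following holds after adjoining a letter $t_H$, letters generating $H$, and finitely many rules $t_Hu\to v\,t_Hw$ with $v$ over the $H$-letters: the system still terminates, and the words $w$ with $t_Hw$ irreducible form a transversal. For finitely generated $H\leqslant F$, the Schreier coset graph of $H$ has only finitely many edges $e$ outside a spanning tree, and the rules $t_Hp_ex_e\to w_e\,t_Hq_e^{-1}$ do the job (\cref{thm:compatible_free}).

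In the combination theorem the stable letter $t_e$ plays the role of $t_H$. That is exactly why every edge, tree edges included, must keep its stable letter. One reduces to a single vertex by identifying all vertices (vertex group $*_vG_v$, still free), not by collapsing edges. One then obtains a finite complete rewriting system for $F(\mathcal{G},\Gamma)\cong\pi_1(\mathcal{G})*F_{|V(\Gamma)|-1}$. This is the true source of the stabilization, and it gives (1) whenever $|V(\Gamma)|\leqslant 3$.

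The bound $2$ and part (2) come from finite-index arguments plus the Groves--Smith theorem that finite complete rewriting systems pass to finite-index overgroups, not from reusing letters:
\begin{enumerate}
\item Gluing $n$ copies of $\mathcal{G}$ along a spanning tree preserves the hypotheses and gives $F_{|V|-1}*(*^n\pi_1(\mathcal{G}))$. For $n=|V|$, its free product with $F_2$ is the kernel of a map $\pi_1(\mathcal{G})*F_2\to\Z/|V|\Z$ factoring through $F_2$.
\item If $H<G$ is proper of finite index, then $H$ is again a graph of finitely generated free groups. Moreover $(H*F_2)*F_{[G:H]-2}=H*F_{[G:H]}$ has finite index in $G*\Z$.
\end{enumerate}
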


As immediate corollaries, we obtain:

\begin{corollary}\label{cor:FreeProdAmalg}
If $G = A*_CB$ with $A, B, C$ finitely generated free, then $G*\Z$ admits a finite complete rewriting system.
\end{corollary}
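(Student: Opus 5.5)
This is a direct application of \cref{thm:main_free}. The amalgam $G = A*_C B$ is the fundamental group of a graph of groups whose underlying graph $\Gamma$ has two vertices, labelled by $A$ and $B$, and one edge, labelled by $C$. The edge monomorphisms are the inclusions $C \hookrightarrow A$ and $C \hookrightarrow B$. All vertex and edge groups are finitely generated free, so $G$ splits as a finite graph of finitely generated free groups and the hypotheses of \cref{thm:main_free} hold.

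We then apply part (1) of \cref{thm:main_free}. Since $|V(\Gamma)| = 2$, we get $k = \min\{2, |V(\Gamma)|-1\} = \min\{2,1\} = 1$. Hence $G * F_1 = G*\Z$ admits a finite complete rewriting system, which is exactly the claim.

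There is no real obstacle. The only points to check are that an amalgamated free product is the fundamental group of the one-edge, two-vertex graph of groups (the standard Bass--Serre description), and that the rank formula in part (1) gives $k=1$ here. Degenerate cases cause no trouble. If $C$ is trivial, then $G = A*B$ is free, and the same graph of groups still applies. If $C$ equals $A$ or $B$, the theorem still applies verbatim, because \cref{thm:main_free} makes no properness assumption on the edge inclusions. Part (2) of the theorem is not needed.
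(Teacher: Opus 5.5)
Your proposal is correct and is exactly the paper's argument: the paper presents this as an immediate corollary of part (1) of \cref{thm:main_free}. The amalgam is a two-vertex, one-edge graph of finitely generated free groups, so $k=\min\{2,1\}=1$ and $G*F_1=G*\Z$ admits a finite complete rewriting system.
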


\begin{corollary}
\label{cor:HNN}
If $G = A*_C$ with $A, C$ finitely generated free, then $G$ admits a finite complete rewriting system.
\end{corollary}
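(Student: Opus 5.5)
An HNN-extension $G = A*_C$ is the fundamental group of a graph of groups whose underlying graph $\Gamma$ has a single vertex, with vertex group $A$, and a single loop edge, with edge group $C$. I will apply Theorem~\ref{thm:main_free}(1) directly. The vertex and edge groups are finitely generated free, so $G$ splits as a finite graph of finitely generated free groups. Since $|V(\Gamma)| = 1$, the theorem gives
\[
k = \min\{2, |V(\Gamma)| - 1\} = 0 ,
\]
so $G * F_0 = G$ admits a finite complete rewriting system.

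Two conventions need checking; neither is a real obstacle. First, $F_0$ must mean the trivial group, so that $G*F_0 \cong G$. This matches the definition of stably admitting a finite complete rewriting system, which allows $k \geqslant 0$. Second, the theorem must cover the degenerate cases. If $C$ is trivial, then $G = A * \Z$, which is still a one-vertex, one-edge graph of free groups, so the theorem applies unchanged. If $A$ or $C$ has rank zero, the groups are still finitely generated free, which is harmless.

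One remark on ordering. The formula for $k$ is the only point where the one-vertex hypothesis enters, so this corollary needs no separate argument beyond invoking part (1) of the theorem. In particular, part (2) and any finite-index considerations are not needed.
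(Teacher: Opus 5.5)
Your proposal is correct and follows the paper's own route: the paper treats this as an immediate consequence of Theorem~\ref{thm:main_free}(1). A single-vertex graph gives $k=\min\{2,|V(\Gamma)|-1\}=0$; equivalently, the spanning tree has no edges, so $F(\mathcal{G},\Gamma)=\pi_1(\mathcal{G})=G$ in \cref{thm:combination}, and no free factor is needed.
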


As a slightly less immediate corollary, using the Dunwoody resolution (see \cite[Lemma 1]{BF91}) we have: 

\begin{corollary}
Every finitely presented subgroup of a graph of free groups stably admits a finite complete rewriting system.
\end{corollary}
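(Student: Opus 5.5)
Let $H$ be a finitely presented subgroup of $G = \pi_1(\mathcal{G})$, where $\mathcal{G}$ is a graph of free groups. The plan is to replace the splitting of $H$ induced by $\mathcal{G}$ with a \emph{finite} graph of finitely generated free groups, and then apply Theorem~\ref{thm:main_free}.

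The first step is the induced splitting. Since $H$ acts on the Bass--Serre tree $T$ of $\mathcal{G}$, it inherits a graph of groups decomposition. Its vertex and edge groups are the intersections of $H$ with conjugates of the vertex and edge groups of $\mathcal{G}$. All of these are subgroups of free groups, so they are free, but they need not be finitely generated, and the underlying graph $H\backslash T$ need not be finite.

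The second step is to pass to a finite splitting, using that $H$ is finitely presented. Here I would invoke the Dunwoody resolution in the form of \cite[Lemma 1]{BF91}: if a finitely presented group $H$ acts on a simplicial tree $T$, there is a $H$-tree $T'$ with finite quotient $H\backslash T'$ and an $H$-equivariant simplicial map $T'\to T$. Moreover, the edge stabilisers of $T'$ are finitely generated, and each vertex and edge stabiliser of $T'$ is contained in a stabiliser for $T$. So $H$ splits as a finite graph of groups whose vertex and edge groups are subgroups of conjugates of vertex groups of $\mathcal{G}$, hence free, and whose edge groups are finitely generated.

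The third step is to check that the vertex groups are also finitely generated. This is the step I expect to need care. A finitely generated group that splits as a finite graph of groups with finitely generated edge groups has finitely generated vertex groups: one retracts onto a vertex group by killing the other vertex groups and the stable letters, modulo edge groups. Equivalently, this is the standard fact that vertex groups of a finite graph of groups with finitely generated edge groups and finitely generated fundamental group are finitely generated. Applying it, the splitting of $H$ is a finite graph of finitely generated free groups.

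The final step applies Theorem~\ref{thm:main_free}(1). It gives that $H * F_k$ admits a finite complete rewriting system for some $k\leqslant 2$, that is, $H$ stably admits one.
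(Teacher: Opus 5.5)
Your proposal is correct and is essentially the paper's argument: the paper simply invokes the Dunwoody resolution \cite[Lemma 1]{BF91} to split $H$ as a finite graph of groups with finitely generated free edge and vertex groups, and then applies Theorem~\ref{thm:main_free}(1). The one inaccuracy is the ``retraction'' heuristic in your third step: killing the other vertex groups and the stable letters only produces a quotient of a vertex group $H_v$, not a retraction onto it. The standard fact you then quote is nevertheless true, as follows. Replace each vertex group by the subgroup generated by its incident edge groups together with the finitely many vertex-group elements appearing in a finite generating set of $H$. The normal form theorem then shows that the resulting sub-graph of groups has fundamental group embedding in, hence equal to, $H$. This forces each of these finitely generated subgroups to be the whole vertex group.
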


We shall prove Theorem~\ref{thm:main_free} by first introducing (see Section~\ref{sec:combining}) the notion of a finite complete rewriting system compatible with a collection of subgroups, and then proving a combination result for graphs of groups in Theorem~\ref{thm:combination}. Associated with a graph of groups is a group $F(\mathcal{G}, \Gamma)$ which naturally arose in Serre's development of the theory of graphs of groups \cite{Serre} and is a natural homomorphic pre-image of the fundamental group $\pi_1(\mathcal{G})$. Our combination result Theorem~\ref{thm:combination} gives sufficient conditions for constructing a finite complete rewriting system for $F(\mathcal{G}, \Gamma)$ from complete rewriting systems of the vertex groups. The proof of Theorem~\ref{thm:main_free} makes use of this combination result together with \cref{thm:compatible_free} which states that the standard finite complete rewriting system of a finitely generated free group is compatible with any collection of finitely generated free groups.

Combining \cref{thm:main_free} with a result of Feighn--Handel \cite{Feighn1999} 
and a result of Groves--Smith \cite{Groves1993}
we obtain a more concrete result:

\begin{restatable}{theorem}{MainTheoremB}
\label{rewriting_main}
If $\F$ is a (not necessarily finitely generated) free group, $\psi\colon \F\to \F$ is a monomorphism and 
\[
G = M(\psi) := \langle \F, t \mid t^{-1}ft = \psi(f), \, \forall f\in \F\rangle
\]
is a mapping torus of a free group, then any finitely generated subgroup of $G$ admits a finite complete rewriting system. 

In particular, any finitely generated virtually free-by-cyclic group, and any finitely generated subgroup of such a group, admits a finite complete rewriting system. 
\end{restatable}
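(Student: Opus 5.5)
The plan is to reduce everything to \cref{cor:HNN} and to known closure properties. First recall the Feighn--Handel theorem \cite{Feighn1999}: every finitely generated subgroup $H$ of a mapping torus $M(\psi)$ of a free group is itself a mapping torus of a free group. Moreover, by their result such an $H$ has a presentation of the form
\[
H \cong \langle A, t \mid t^{-1} a t = \phi(a),\ a \in B \rangle,
\]
where $A$ is a finitely generated free group, $B \leqslant A$ is a finitely generated subgroup and $\phi \colon B \to A$ is injective. In other words, $H$ is an HNN-extension $A*_B$ of a finitely generated free group over a finitely generated subgroup. (If $H$ happens to lie in the normal closure of $\F$, then $H$ is a finitely generated subgroup of the free group, hence free, and the standard finite complete rewriting system applies.)

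The first step is to verify that the Feighn--Handel conclusion indeed places $H$ in the scope of \cref{cor:HNN}. I expect this to be the main point to check carefully: their statement is phrased as ``$H$ is finitely presented and is an HNN-extension of a finitely generated free group with finitely generated associated subgroups''. This is exactly the hypothesis of \cref{cor:HNN}, with $C = B$ and the two embeddings given by the inclusion and $\phi$. Applying \cref{cor:HNN} then yields a finite complete rewriting system for $H$, which proves the first assertion. Note that no free factor $\Z$ or $F_2$ is needed here, because the graph of groups has a single vertex, so $k=0$ in \cref{thm:main_free}.

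For the second assertion, let $G$ be a finitely generated virtually free-by-cyclic group and $K \leqslant G$ finitely generated. Choose a finite index subgroup $G_0 \leqslant G$ of the form $F \rtimes \Z$ with $F$ free. We may take $F$ finitely generated, or not; the first part allows arbitrary rank, and $F\rtimes\Z$ is the mapping torus of an automorphism. Then $K_0 = K \cap G_0$ has finite index in $K$, so it is finitely generated, and it is a finitely generated subgroup of a mapping torus. By the first part, $K_0$ admits a finite complete rewriting system. Finally, apply the Groves--Smith result \cite{Groves1993} that admitting a finite complete rewriting system passes to finite extensions: if a finite index subgroup has one, so does the group. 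This gives the system for $K$.

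The only potential subtlety is that the Groves--Smith theorem requires the finite index subgroup to be normal, or is stated for extensions. If so, I would replace $K_0$ by its normal core in $K$. The core is still of finite index in $K$ and still a finitely generated subgroup of $G_0$, so the first part applies to it as well.
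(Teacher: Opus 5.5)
Your proposal is correct and is essentially the paper's argument. Feighn--Handel's Proposition~2.3 presents $H$ as an HNN-extension of a finitely generated free group with finitely generated associated subgroups; \cref{cor:HNN} then gives the finite complete rewriting system; and the Groves--Smith finite-index result handles the virtually free-by-cyclic case (in the form of \cref{prop:finite_index} it does not need normality).

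There is one small slip in your parenthetical case. The normal closure of $\F$ in $M(\psi)$ is the ascending union $\bigcup_{n\geqslant 0} t^n\F t^{-n}$, which is in general only locally free, not free (e.g.\ $\Z[1/2]$ in $\langle a,t\mid t^{-1}at=a^2\rangle$). Still, a finitely generated subgroup lies in some $t^n\F t^{-n}\cong\F$, so your conclusion that $H$ is free holds.
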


It appears that finite complete rewriting systems were not known to exist even for mapping tori of finitely generated free groups. However, it is important to stress that the fact that any \{finitely generated free\}-by-cyclic group (the case when $\F$ is finitely generated and $\psi$ is an isomorphism) admits a finite complete rewriting system is already known and follows from \cite[p286]{Groves1993}. Theorem~\ref{rewriting_main} says much more than this since it applies to any \{finitely generated\} free-by-cyclic group and thus applies to any finitely generated subgroup of a  \{finitely generated free\}-by-cyclic group $F_n \rtimes \Z$. To obtain applications like Corollary~\ref{cor:PrimRank} below for one-relator groups with torsion (and the other applications in this paper), one needs Theorem~\ref{rewriting_main} for general \{finitely generated\} free-by-cyclic groups since one-relator groups with torsion, for example, are not virtually \{finitely generated free\}-by-cyclic. We note that it has recently been proved \cite{Li26} that in fact the \{finitely generated\} free-by-cyclic groups are precisely the finitely generated subgroups of the \{finitely generated free\}-by-cyclic groups.

Theorem~\ref{thm:main_free} and Theorem~\ref{rewriting_main} can be applied to give partial solutions to each of the four fundamental open problems listed above. This is possible due to the fact that quite a number of groups are now known to be virtually free-by-cyclic.  After proving Theorem~\ref{thm:main_free}, then in the second half of the paper, we shall give some of these applications. Some of the applications are immediate consequences of Theorem~\ref{rewriting_main} while others, like our applications to three-manifold groups, require a little more work to establish. In particular the proofs of our results on three-manifold groups will also involve further applications of our combination result Theorem~\ref{thm:combination}. We shall now give a summary of the main applications of Theorem~\ref{thm:main_free} and Theorem~\ref{rewriting_main} that we will establish in this paper. 

It was proved in \cite[Theorem A.2]{fisher2026coherent}
that if $G$ is a virtually compact special and locally quasi-convex hyperbolic group, then $G$ is virtually free-by-cyclic. Using that result we will apply Theorem~\ref{rewriting_main} to prove the following.  

\begin{theorem}\label{thm:locallyQuasConv}
If $G$ is a locally quasi-convex hyperbolic and virtually compact special group, then finitely generated subgroups of $G$ admit finite complete rewriting systems. Moreover, there is an algorithm which, on input a finite presentation for $G$ and a finite subset $S\subset G$, computes a finite complete rewriting system for the subgroup $H = \langle S\rangle\leqslant G$.
\end{theorem}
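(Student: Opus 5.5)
The plan is to reduce to Theorem~\ref{rewriting_main} via the result of \cite[Theorem A.2]{fisher2026coherent}. Let $G$ be locally quasi-convex, hyperbolic and virtually compact special, and let $H \leqslant G$ be finitely generated. By \cite[Theorem A.2]{fisher2026coherent}, $G$ has a finite index subgroup $G_0$ which splits as $\F \rtimes \Z$ with $\F$ free; since $G$ is hyperbolic and finitely generated, $G_0$ is finitely generated. Hence $G$ is a finitely generated virtually free-by-cyclic group. The second paragraph of Theorem~\ref{rewriting_main} then says that every finitely generated subgroup of $G$ admits a finite complete rewriting system, and in particular $H$ does. This settles the existence part with essentially no work.

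The algorithmic part is the real content. I would proceed in three steps.

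First, from the finite presentation of $G$ and the finite set $S$, effectively find a finite presentation of $H$. Since $G$ is locally quasi-convex and hyperbolic, $H$ is quasi-convex and hence finitely presented. I would run in parallel:
\begin{enumerate}
\item a procedure that enumerates candidate hyperbolicity constants $\delta$, using Papasoglu's algorithm to certify hyperbolicity once the right $\delta$ is reached;
\item quasi-convexity constants $\lambda$, checked by the standard semi-decision procedure that constructs a finite-state automaton for the quasi-convex subgroup (Kapovich's algorithm).
\end{enumerate}
Quasi-convexity with known constants then yields a finite presentation $\langle S \mid R\rangle$ of $H$, with relators of length bounded in terms of $\delta$ and $\lambda$.

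Second, search for a finite complete rewriting system for $H$. The naive approach enumerates all finite rewriting systems over alphabets $X$, together with maps to words in $S$ and back. For each system we check three things:
\begin{enumerate}
\item it presents $H$, verified by Tietze transformations searched exhaustively;
\item it is terminating;
\item it is locally confluent.
\end{enumerate}
Local confluence is decidable for finite systems via critical pairs. Termination is undecidable in general, so instead I would enumerate pairs consisting of a rewriting system and a certificate of termination, namely a recursive path ordering or a length-lexicographic ordering compatible with all rules, which is checkable.

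This forces a point I must verify: the systems produced in the proof of Theorem~\ref{rewriting_main} should be terminating with respect to a decidably verifiable ordering such as a wreath-product or recursive-path ordering. If the combination theorem and the Groves--Smith construction only yield termination via orderings of that kind, the enumeration succeeds. Since some system exists by the first paragraph, the search halts.

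The main obstacle is making this second step genuinely algorithmic, namely certifying termination. The fallback is to avoid blind search and instead follow the proof of Theorem~\ref{rewriting_main} constructively:
\begin{enumerate}
\item compute a finite index free-by-cyclic subgroup of $H$ by enumerating finite index subgroups and testing for a splitting, using decidability results for free-by-cyclic structure;
\item obtain a graph of finitely generated free groups via Feighn--Handel, computing the splitting effectively;
\item apply the explicit combination construction of Theorem~\ref{thm:combination}, which is constructive given the data;
\item pass back to $H$ via Groves--Smith's construction for finite extensions, which is also explicit.
\end{enumerate}
I expect the effectiveness of the Feighn--Handel step, that is, computing the splitting of a finitely generated free-by-cyclic group as a graph of finitely generated free groups, to be the point requiring the most care.
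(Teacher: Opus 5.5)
Your existence argument is the paper's: \cite[Theorem A.2]{fisher2026coherent} followed by Theorem~\ref{rewriting_main}. Your first algorithmic step, producing a finite presentation of $H$ via Papasoglu's and Kapovich's procedures, is a reasonable unpacking of Grunschlag's effective coherence result \cite{Grunschlag1999}, which the paper cites. The gap is the passage from that presentation to a finite complete rewriting system, and neither of your routes completes it.

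In the blind search, the inference ``some system exists, so the search halts'' is invalid. Termination is not semi-decidable, so the search halts only if some complete system for $H$ has a certificate from your chosen class, and you give no argument for this. For the systems built in the paper it is not established. The rules $(t_Hp_ex_e, w_et_Hq_e^{-1})$ of Theorem~\ref{thm:compatible_free} can be length-increasing, so shortlex does not orient them. Moreover, the termination proofs of Theorems~\ref{thm:compatible_free} and~\ref{thm:combination} use semantic measures rather than path orderings: the number of traversals of non-tree edges in a Schreier graph, and tuples of maximal derivation lengths.

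Your constructive fallback stops exactly where you flag difficulty. You propose to detect a free-by-cyclic structure, whose fibre may be infinitely generated, so it is unclear what finite object you would test for. You then propose to compute a Feighn--Handel splitting from it, without a method for either step.

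The missing idea is that nothing has to be computed from the free-by-cyclic structure; it suffices to \emph{recognise} a presentation of the right shape. Let $G_0\leqslant G$ be the finite index free-by-cyclic subgroup. Then $H_0=H\cap G_0$ has finite index in $H$ and is a finitely generated subgroup of a mapping torus. So by \cite[Proposition 2.3]{Feighn1999} it has a presentation $\langle X, t \mid t^{-1}u_it=w_i,\ 1\leqslant i\leqslant m\rangle$ with $\{u_1,\ldots,u_m\}$ and $\{w_1,\ldots,w_m\}$ each freely independent in $F(X)$. Whether a given finite presentation has this form is decidable, since free independence is checked by Stallings folding.

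So, starting from the finite presentation of $H$, dovetail two enumerations: finite index subgroups (with Reidemeister--Schreier presentations and coset data), and Tietze transformations. Stop when such an HNN presentation appears; this must eventually happen. From it, the system of Theorem~\ref{thm:combination}, assembled from the compatible systems of Theorem~\ref{thm:compatible_free} (computable by Remark~\ref{rem:algorithmic}), is written down explicitly. Its completeness is already proved rather than searched for, and the Groves--Smith construction lifts it to $H$. This is Theorem~\ref{thm:computingFCRSGeneral}, which the paper applies to $H$ directly after the effective coherence step.
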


Then applying \cite[Theorem 1.4]{linton2025geometry} we obtain the following which gives a partial solution to the open question of whether all one-relator groups admit finite complete rewriting systems. We denote by $\pi(w)$ the primitivity rank of the relator word $w \in F$ which is defined as follows:    
\[
\pi(w) = \mathrm{min}\{
\mathrm{rk}(H) \mid 
w \in H \leq F, w \; \mbox{not primitive in} \; H \} \in \mathbb{N} \cup \{ \infty \}. 
\]

\begin{corollary}\label{cor:PrimRank}
If $G = F/\normal{w}$ is a finitely generated one-relator group with $\pi(w)\neq 2$ (e.g. if $G$ is a one-relator group with torsion) then all finitely generated subgroups of $G$ admit finite complete rewriting systems. Moreover, there is an algorithm which, for any finite subset $S\subset G$, computes a finite complete rewriting system for the subgroup $H = \langle S\rangle\leqslant G$.
\end{corollary}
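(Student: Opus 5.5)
The plan is to deduce Corollary~\ref{cor:PrimRank} from Theorem~\ref{thm:locallyQuasConv} by verifying its hypotheses using \cite[Theorem 1.4]{linton2025geometry}.

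\textbf{Reduction to the locally quasi-convex case.} As I recall it, \cite[Theorem 1.4]{linton2025geometry} says that if $\pi(w)\neq 2$ then $G=F/\normal{w}$ is either hyperbolic and virtually compact special with every finitely generated subgroup quasi-convex, or else $\pi(w)=1$. I need to check the exact formulation, in particular how the degenerate cases are stated.

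\textbf{The degenerate cases.}
\begin{itemize}
\item $\pi(w)=1$ means $w$ is a proper power, so $G$ is a one-relator group with torsion. Such groups are hyperbolic (Newman's spelling theorem) and virtually compact special (Wise). They are also locally quasi-convex, as I believe \cite{linton2025geometry} shows. If instead the cited theorem covers only $\pi(w)\geqslant 3$, this case has to be supplied separately from the literature on one-relator groups with torsion.
\item $\pi(w)=\infty$ means $w$ is primitive in $F$ (or trivial), so $G$ is free. Free groups are trivially locally quasi-convex, hyperbolic and compact special. Their finitely generated subgroups are free of finite rank, and these have the standard finite complete rewriting system.
\end{itemize}
The parenthetical remark in the statement then follows because a proper power $w=u^n$ with $n\geqslant 2$ has $\pi(w)=1\neq 2$.

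\textbf{Applying Theorem~\ref{thm:locallyQuasConv}.} In the remaining cases, Theorem~\ref{thm:locallyQuasConv} shows that every finitely generated subgroup admits a finite complete rewriting system. For the algorithmic statement, the presentation $\langle X\mid w\rangle$ is a finite presentation of $G$, so the algorithm of Theorem~\ref{thm:locallyQuasConv} applies to any finite $S\subset G$.

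One subtlety is whether we must first decide which case occurs. Computing $\pi(w)$ is algorithmic: $\pi(w)$ is attained among the finitely many subgroups containing $w$ that arise from Stallings folding, so one can search them. Primitivity of $w$ in $F$ is decidable by Whitehead's algorithm. The free case can also be handled uniformly, since a free group satisfies the hypotheses of Theorem~\ref{thm:locallyQuasConv} anyway.

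\textbf{Main obstacle.} The only real issue is matching the hypotheses exactly: making sure \cite[Theorem 1.4]{linton2025geometry} gives hyperbolicity, virtual compact specialness and local quasi-convexity precisely under the condition $\pi(w)\neq 2$, including the torsion case. Everything else is formal.
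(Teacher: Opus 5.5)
Your proposal is correct and takes the same route as the paper: it uses \cite[Theorem 1.4]{linton2025geometry} to place $G$ within the hypotheses of Theorem~\ref{thm:locallyQuasConv}, then applies that theorem, including its algorithmic clause with the finite presentation $\langle X\mid w\rangle$. Your separate handling of the free and proper-power cases is a harmless safeguard. Two small points: the parenthetical remark uses the Karrass--Magnus--Solitar theorem (torsion forces $w$ to be a proper power, hence $\pi(w)=1$), and $w=1$ gives $\pi(w)=0$ rather than $\infty$, though $G$ is free either way.
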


Theorem~\ref{thm:locallyQuasConv} also applies to some other one-relator groups not covered by Corollary~\ref{cor:PrimRank}. 
For example, by\cite[Corollary 1.5]{KL24b}  it may be applied to prove that small cancellation one-relator groups, and all their finitely generated subgroups, all admit finite complete rewriting systems. Since generic groups in the few relator model (and thus, generic one-relator groups) are small cancellation, this implies that a generic one-relator group (and all its finitely generated subgroups) admits a finite complete rewriting system.

In addition to the above classes of one-relator groups, it is proved by Fisher in \cite{fisher2026cohomological}  that any virtually RFRS one-relator group is virtually free-by-cyclic and so Theorem~\ref{rewriting_main} applies to show all of those one-relator groups admit finite complete rewriting systems. Wise \cite[Conjecture 17.8]{WiseWhatsNext} has conjectured that every hyperbolic one-relator group is virtually free‑by‑cyclic and so if that conjecture holds it would clearly follow from Theorem~\ref{rewriting_main} that every hyperbolic one-relator group admits a finite complete rewriting system.
Also, Theorem~\ref{rewriting_main} applies to one-relator groups that are (virtually) mapping tori of free groups e.g. Baumslag--Solitar groups and other examples such as the non-linear residually finite one-relator group of Drutu and Sapir \cite{DrutuSapir2005}.  
 
As mentioned above, it is an open problem as to whether all one-relator groups admit finite complete rewriting systems. It is even possible that every finitely generated subgroup of a one-relator groups admits a finite complete rewriting system. It follows from the Anick--Groves--Squier Theorem \cite{AGS} that if a group admits a finite complete rewriting system then it must be of type $\mathrm{FP}_\infty$. It may be shown\footnote{Since one-relator groups with torsion are coherent, every finitely generated subgroup of a one-relator group with torsion is finitely presented and thus is hyperbolic by \cite{Gersten} and hence is of type $\mathrm{FP}_\infty$ using the Rips complex. If $G$ is a torsion-free one-relator group then $G$ has cohomological dimension two and so all its subgroups have cohomological dimension two. Since $G$ is coherent \cite{JZL2025}, every finitely generated subgroup has type $\mathrm{FP}_2$ and cohomological dimension two and so has type $\mathrm{FP}_{\infty}$.} that every finitely generated subgroup of a one-relator group is of type $\mathrm{FP}_\infty$, so the standard obstruction to having a finite complete rewriting system is not possible here.  

Recall that a limit group is a finitely generated fully residually free group. Regarding the third of the fundamental open problems we list above, which asks about the existence of complete rewriting systems for limit groups \cite[Problem 2.7]{Diekert2010}, we may apply Theorem~\ref{rewriting_main} to prove the following:  

\begin{corollary}\label{theorem:LimitGroups}
Limit groups with cohomological dimension $2$   (equivalently, limit groups without $\Z^3$-subgroups) admit finite complete rewriting systems. In particular every  hyperbolic limit group admits a finite complete rewriting system.
\end{corollary}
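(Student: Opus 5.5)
The plan is to reduce the statement to Theorem~\ref{rewriting_main} by showing that limit groups of cohomological dimension at most $2$ are virtually free-by-cyclic, or at least embed as finitely generated subgroups of virtually free-by-cyclic groups. Two known inputs make this work. First, limit groups are virtually compact special (Wise), and hyperbolic limit groups are locally quasi-convex. Second, limit groups are coherent and all their finitely generated subgroups are again limit groups.

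For the hyperbolic case the argument is short. A hyperbolic limit group $G$ is finitely presented, hyperbolic and virtually compact special. It is also locally quasi-convex: a finitely generated subgroup is either quasi-convex, or it is virtually a fibre, which cannot happen in a limit group because of its hierarchy structure. This last claim follows from results of Wilton on limit groups, and is the input I would cite. Given local quasi-convexity, Theorem~\ref{thm:locallyQuasConv} applies directly and gives a finite complete rewriting system, even for all finitely generated subgroups.

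For the general cohomological dimension $2$ case I would use the following route. Limit groups are virtually RFRS, being virtually compact special. By Kielak's fibring criterion, or Fisher's refinement in dimension two, a virtually RFRS group of type $\mathrm{FP}_2$ with vanishing first $L^2$-Betti number virtually fibres with finitely generated free kernel. That conclusion is not available here, because limit groups generally have $b_1^{(2)}>0$ (for example surface groups). So I would aim for the weaker conclusion that $G$ is a finitely generated subgroup of a virtually free-by-cyclic group, which Theorem~\ref{rewriting_main} also covers. Concretely, Fisher's results on cohomological dimension two RFRS groups show that such a group virtually embeds in a free-by-cyclic group with finitely generated fibre. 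I would check that the version with $\mathrm{FP}_2$ and $\mathrm{cd}=2$ hypotheses applies to limit groups. Limit groups are $\mathrm{FP}_\infty$, so the finiteness hypothesis is satisfied, and I would record the equivalence of $\mathrm{cd}=2$ with having no $\Z^3$ subgroups via the abelian-hierarchy description. The cohomological dimension $\le 1$ case consists of free groups, which are trivial.

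The main obstacle is this virtual embedding step. If Fisher's theorem cannot be invoked verbatim, a backup is to use the cyclic JSJ hierarchy. A cd $2$ limit group is built from free groups, $\Z^2$ and surface groups by amalgamations over cyclic subgroups. One could then try to realise $G$ as a graph of free groups up to a stable factor, in which case Theorem~\ref{thm:main_free} gives only stable results. That is weaker than the statement, so the free-by-cyclic route is the one I would commit to.
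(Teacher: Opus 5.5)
Your hyperbolic case is a legitimate alternative route. It goes through Theorem~\ref{thm:locallyQuasConv}, hence through \cite[Theorem A.2]{fisher2026coherent} rather than Hagen--Wise. You do need to justify local quasi-convexity by Wilton's theorem that finitely generated subgroups of limit groups are virtual retracts, and hence undistorted. The ``quasi-convex or virtually a fibre'' dichotomy you invoke is not a general fact about hyperbolic groups.

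The genuine gap is in the cohomological dimension $2$ case. You commit to the claim that Fisher's results show a virtually RFRS group of type $\mathrm{FP}_2$ with $\mathrm{cd}=2$ virtually embeds in a \{finitely generated free\}-by-cyclic group. This is not a theorem, and it is false under those hypotheses. Take $F_2\times F_2$: it is a right-angled Artin group (so RFRS and compact special), of type $\mathrm{F}$, of cohomological dimension $2$, with no $\Z^3$ subgroups. Yet every finite index subgroup contains some $F_m\times F_n$ with $m,n\geqslant 2$, which is incoherent, whereas \{finitely generated free\}-by-cyclic groups are coherent \cite{Feighn1999}. The same group shows that your version of the fibring criterion is wrong: $b_1^{(2)}(F_2\times F_2)=0$ gives a virtual fibration whose kernel is finitely generated but not free. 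So checking ``$\mathrm{FP}_2$ and $\mathrm{cd}=2$'' for limit groups establishes nothing.

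What is missing is the top-dimensional $L^2$-condition. The result you are reaching for is due to Fisher \cite{fisher2026cohomological}, building on \cite{KL24b}. It says that a finitely generated virtually RFRS group $G$ of cohomological dimension $2$ with $b_2^{(2)}(G)=0$ is virtually free-by-cyclic, where the free fibre may be infinitely generated. That is exactly what Theorem~\ref{rewriting_main} needs, since $\F$ there is allowed to have infinite rank. It is also the only possible conclusion for limit groups: a closed surface group is \{infinitely generated free\}-by-$\Z$, but has no finitely generated normal subgroup with quotient $\Z$.

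So your detour via $b_1^{(2)}$ and embeddings into \{finitely generated free\}-by-cyclic groups is beside the point. To repair your route you must add and verify $b_2^{(2)}(G)=0$ for limit groups without $\Z^3$. One way is Mayer--Vietoris along an abelian hierarchy, since trivial and infinite amenable edge groups have vanishing first $L^2$-Betti number.

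The paper avoids all of this. It quotes Hagen--Wise \cite[Corollary B]{HW2010}, which states directly that every limit group without $\Z^3$, in particular every hyperbolic limit group, is virtually free-by-cyclic, and then applies Theorem~\ref{rewriting_main}.
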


This corollary follows from Theorem~\ref{rewriting_main} since it was proved by Hagen--Wise \cite[Corollary B]{HW2010} that every hyperbolic limit group (and more generally any limit group not embedding $\mathbb{Z}^3$) is virtually free-by-cyclic. 

The final problem to which we apply Theorem~\ref{rewriting_main} is the open question of whether all thee-manifold groups admit finite complete rewriting systems. In \cite{Hermiller1999}, Hermiller--Shapiro show that all closed geometric three-manifolds admit finite complete rewriting systems. Their proof was conditional on the virtual fibring conjecture for the hyperbolic case, which has been solved since their paper was published. At the end of \cite{Hermiller1999} they ask whether every fundamental group of a closed three-manifold admits a finite complete rewriting system. In Section~\ref{sec:3man} we shall prove the following result, almost answering their question.  

\begin{restatable}{theorem}{MainTheoremC}
\label{thm:3-manifold:intro}
If $M$ is a compact three-manifold, then $\pi_1(M) \ast \Z$ admits a finite complete rewriting system.  
\end{restatable}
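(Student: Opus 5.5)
The plan is to reduce to the prime, aspherical pieces of $M$ and then glue them back together along the JSJ and prime decompositions using the combination result, Theorem~\ref{thm:combination}. The extra free factor $\Z$ will absorb the technicality of stable rewriting systems, exactly as in Theorem~\ref{thm:main_free}.

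\textbf{Step 1: reduce to prime pieces.} First reduce to $M$ orientable and with no spherical boundary components, by capping off spheres and passing to the appropriate cover. Strictly, non-orientable pieces should be handled directly, since finite complete rewriting systems do not pass to finite extensions in general. By Kneser--Milnor, $\pi_1(M)$ is a free product $\pi_1(M_1)*\cdots*\pi_1(M_r)*F_s$, where each $M_i$ is prime and not $S^2\times S^1$. Free products of groups with finite complete rewriting systems admit finite complete rewriting systems: take the union of the systems on disjoint generating sets. It therefore suffices to treat each prime factor, provided the $*\Z$ can be handled once globally.

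\textbf{Step 2: non-aspherical and geometric prime pieces.} A prime piece with finite $\pi_1$ has a finite complete rewriting system trivially. The remaining prime pieces are aspherical. If such a piece has nonempty boundary, its group is virtually free-by-cyclic or a graph of free groups. Concretely, for Seifert fibred and hyperbolic pieces with boundary, Przytycki--Wise and Wise show that $\pi_1$ is virtually special and virtually free-by-cyclic, with finite index subgroups again being $3$-manifold groups; if the piece has compressible boundary, it splits further. For closed pieces that are geometric, Hermiller--Shapiro \cite{Hermiller1999} together with virtual fibring gives a finite complete rewriting system. For $\pi_1$ of a compact aspherical $3$-manifold with boundary, one also has that the group is a finitely generated subgroup of a free-by-cyclic group, so Theorem~\ref{rewriting_main} applies. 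The remaining care is to handle virtuality: Theorem~\ref{rewriting_main} is stated for subgroups, so one needs $\pi_1$ itself, not just a finite index subgroup, to embed in a mapping torus. This should hold for the relevant pieces by results on fibring of $3$-manifolds with boundary.

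\textbf{Step 3: the non-geometric closed case via JSJ.} For closed aspherical non-geometric $M$, the JSJ decomposition expresses $\pi_1(M)$ as a graph of groups. The vertex groups are $\pi_1$ of Seifert or hyperbolic pieces with torus boundary, and the edge groups are $\Z^2$. Here I would apply Theorem~\ref{thm:combination}. This requires, for each vertex group, a finite complete rewriting system compatible with the boundary $\Z^2$ subgroups in the sense of Section~\ref{sec:combining}. For Seifert pieces one builds these by hand from the central extension structure, and for hyperbolic pieces one uses the free-by-cyclic structure and compatibility with the free fibre subgroups, along the lines of \cref{thm:compatible_free}.

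The combination theorem produces a system for $F(\mathcal{G},\Gamma)$ rather than for $\pi_1$. Passing from $F(\mathcal{G},\Gamma)$ back to $\pi_1(\mathcal{G})$ introduces the extra free factor, and this is precisely where the $*\Z$ enters. I expect this step to be the main obstacle: ensuring that the vertex rewriting systems can be chosen compatible with all peripheral tori simultaneously. I would first try to arrange the fibrations so that the peripheral tori meet the fibre in cyclic subgroups, making compatibility reduce to the free-group case.
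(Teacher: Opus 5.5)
Your overall architecture is right: free product reduction, virtual free-by-cyclicity in the bounded case, and the combination theorem as the source of the free factor. But the proposal has a genuine gap, and it traces back to a false premise in Step~1. Finite complete rewriting systems \emph{do} pass to finite-index overgroups (Groves--Smith \cite{Groves1993}, \cref{prop:finite_index}); what is open is passage to finite-index subgroups. Your own Step~1 passes to a cover, so the worry is also internally inconsistent.

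Because you avoid covers, you attack the hardest cases head-on. In the paper, every closed irreducible manifold that virtually fibres is handled by a surface-by-cyclic finite-index subgroup (\cref{thm:SurfaceByCyclic}) plus Groves--Smith, with no free factor at all. This includes all closed hyperbolic and mixed ones, by \cite{Ag13,PW18}. The bounded aspherical case is covered directly by the ``virtually free-by-cyclic'' clause of \cref{rewriting_main}, so your concern about $\pi_1$ itself embedding in a mapping torus is moot. The combination theorem is needed only for closed graph manifolds.

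Step~3 then has two unfilled holes. First, you never construct vertex systems compatible with the peripheral $\Z^2$'s; you flag this yourself as the main obstacle. Nothing available does it for general Seifert pieces, which have exceptional fibres and possibly non-orientable base, so their groups are in general not direct products, while \cref{prop:prod} only treats $\Z\times G$. Nor does anything handle hyperbolic pieces, which fibre only virtually and then only as semidirect products. The missing idea is Wang--Yu (\cref{lem:product}): a closed graph manifold is finitely covered by one whose JSJ pieces are $S_i\times S^1$ with $S_i$ orientable. For these, compatibility is immediate from \cref{thm:compatible_free}, applied to the boundary cyclic subgroups of the free group $\pi_1(S_i)$, followed by \cref{prop:prod}; this is \cref{prop:seifert}.

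Second, your count of free factors is wrong. \cref{lem:free_prod} gives $F(\mathcal{G},\Gamma)\cong\pi_1(\mathcal{G})*F_{|V|-1}$, not $\pi_1(\mathcal{G})*\Z$, and you would pick up such a factor for each graph-manifold summand. The paper first reduces this to $H*F_2$ via \cref{prop:producting} and \cref{cor:combination2}: $|V|$ copies of $\mathcal{G}$ glued along a spanning tree form a finite-index subgroup of $\pi_1(\mathcal{G})*F_2$. Here $H$ is a \emph{proper} finite-index subgroup of $G=*_i\pi_1(M_i)$. Then \cref{prop:finite_index}(2) with $[G:H]\geqslant 2$ yields $G*\Z$ (\cref{thm:graph_manifold}). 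Finally, the finite-index subgroup $F*(*_i\pi_1(M_i))$ of $\pi_1(M)$ from \cite[Proposition 2.2]{KL24b} is promoted back up to $\pi_1(M)*\Z$, again by \cref{prop:finite_index}. Every one of these steps uses passage to finite-index overgroups, which is exactly what your proposal rules out.
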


Note that while the question in \cite{Hermiller1999} only asked about closed three-manifolds our result applies to fundamental groups of arbitrary compact three-manifolds.   

We also note that it is an open problem if a free product of groups $H \ast K$ admitting a finitely complete rewriting system implies that both $H$ and $K$ also do; see \cite{Pride_Wang_2000} which also explains how that problem relates to the open question of whether having a finite complete rewriting system is a property preserved under taking finite index subgroups of groups. One challenge when trying to prove that having a finite complete rewriting system might pass from $\pi_1(M)*\Z$ to $\pi_1(M)$ is the fact that this property  is sensitive to the choice of finite generating set for the group; indeed, Jantzen showed in \cite{jantzen1985note} that the Klein bottle group $\langle a, b \mid a^2b^2\rangle$ does not admit a finite complete rewriting system on the generating set $\{a, b\}$, although it does over another generating set.

We do not know whether all three-manifold groups admit finite complete rewriting systems. Nevertheless, our results do show the following.  
\begin{theorem}\label{thm:GoodPieces}
If $M$ is a compact orientable three-manifold whose prime decomposition pieces do not contain any closed graph manifolds which do not virtually fibre, then $\pi_1(M)$ admits a finite complete rewriting system. Furthermore any finitely generated subgroup of $\pi_1(M)$ admits a finite complete rewriting system. 
\end{theorem}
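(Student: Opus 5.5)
We first reduce to a single prime piece, then run through the geometric cases, and finally pass to finitely generated subgroups. By the Kneser--Milnor prime decomposition, $\pi_1(M)$ is a free product of the groups $\pi_1(M_i)$ of the prime pieces, possibly with some free factors $\Z$ coming from $S^2\times S^1$ summands. Finite complete rewriting systems are preserved under free products: take the union of the systems on disjoint alphabets. So it suffices to treat a single prime, orientable, compact piece $M_i$ that is not a closed graph manifold failing to virtually fibre.

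For a prime piece we split into cases.
\begin{enumerate}
\item If $M_i$ has nonempty boundary that is not all spheres, then $\pi_1(M_i)$ is virtually free-by-cyclic. The hyperbolic case uses Wise, Agol and Przytycki--Wise (virtually compact special with boundary), and the graph manifold case uses Wise--Przytycki and Liu. Theorem~\ref{rewriting_main} then applies. When $\pi_1(M_i)$ is free we are trivially done.
\item If $M_i$ is closed and virtually fibred, then $\pi_1(M_i)$ is virtually (surface group)-by-$\Z$, hence virtually free-by-cyclic after passing to a cover or using a subgroup. I would rather handle this through the splitting: closed hyperbolic and mixed manifolds virtually fibre by Agol and Przytycki--Wise, and virtually fibred graph manifolds are covered by hypothesis. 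However, a virtually (closed surface)-by-$\Z$ group is not obviously virtually free-by-cyclic. I would therefore treat this case using Groves--Smith: the class of groups with finite complete rewriting systems contains virtual extensions built from groups with finite complete rewriting systems in a suitable sense, namely surface groups and $\Z$.
\item If $M_i$ is closed and geometric (spherical, Nil, Sol, Euclidean, $\widetilde{SL_2}$, $\mathbb{H}^2\times\mathbb{R}$), we cite Hermiller--Shapiro \cite{Hermiller1999}, whose hyperbolic case is now unconditional.
\end{enumerate}

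Finitely generated subgroups are the main obstacle. By Scott's core theorem, a finitely generated $H\le\pi_1(M)$ is $\pi_1$ of a compact three-manifold $N$ covering $M$. I would split on whether $N$ is closed.
\begin{enumerate}
\item If $N$ is closed, then $H$ has finite index. It is then the fundamental group of a finite cover, whose prime pieces are again not non-virtually-fibred closed graph manifolds, since finite covers of a piece cover pieces and virtual fibring passes to covers. We rerun the argument above.
\item If $N$ is not closed, then $H$ is a free product of groups of prime compact manifolds with boundary (or is free), or it is an infinite index subgroup of a prime piece. In the latter case, for hyperbolic and fibred pieces, the tameness and covering theorems show that $H$ is virtually fibred with fibre a surface, or is a subgroup of a virtually free-by-cyclic group. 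For the remaining pieces, $\pi_1$ of a compact manifold with nonempty non-spherical boundary is virtually free-by-cyclic, as above. In either situation Theorem~\ref{rewriting_main} applies to finitely generated subgroups of virtually free-by-cyclic groups.
\end{enumerate}
The delicate part is ensuring that every case lands inside a virtually free-by-cyclic group or inside the Hermiller--Shapiro geometric list. For closed pieces with Seifert or Sol geometry, we must also check that their infinite index finitely generated subgroups are free, surface or $\Z^2$-type groups, all of which have finite complete rewriting systems.
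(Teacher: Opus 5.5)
For the first assertion your outline is essentially the paper's proof (Propositions~\ref{prop:irreducible} and~\ref{prop:conditional}). You use Lemma~\ref{lem:join} for free products, Theorem~\ref{rewriting_main} for pieces with boundary, a surface-by-$\Z$ finite-index subgroup with Groves--Smith and Proposition~\ref{prop:finite_index} for closed virtually fibred pieces, and \cite{Hermiller1999} for geometric ones. The paper reduces via the finite-index subgroup of \cite[Proposition 2.2]{KL24b} rather than Kneser--Milnor, which is immaterial for orientable $M$. Two corrections are needed.
\begin{enumerate}
\item Virtual compact specialness does not by itself make a bounded piece virtually free-by-cyclic. Fibring over the circle forces toral boundary, and a piece with a boundary component of genus $\geq 2$ has $\chi<0$, so it is never virtually $\{$f.g.\ free$\}$-by-cyclic. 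The input you need is Kielak--Linton \cite{KL24b} (virtually RFRS, $\mathrm{cd}\le 2$, $b_2^{(2)}=0$), whose free kernels may be infinitely generated. That is why Theorem~\ref{rewriting_main} is proved for arbitrary $\F$.
\item Your first sentence in case 2 is false, not merely unobvious. A closed aspherical three-manifold group has $\mathrm{cd}=3$, so it is never virtually free-by-cyclic.
\end{enumerate}

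The genuine gap is in the statement about finitely generated subgroups. The Scott core $N$ of $H$ is a compact submanifold of the cover $\widetilde{M}_H$, not a cover of $M$. A non-closed $N$ may still have closed prime summands (e.g.\ $H=\pi_1(M_1)*\langle g\rangle$ for a closed piece $M_1$ and suitable $g$), so your dichotomy does not exhaust the cases. The clean reduction is Kurosh plus Grushko: $H$ is a free product of a finitely generated free group and finitely many finitely generated subgroups $K$ of conjugates of the $\pi_1(M_i)$. Your finite-cover argument handles finite-index $K$, and Theorem~\ref{rewriting_main} handles $K$ inside bounded pieces. What you never establish is the remaining case: $K$ of infinite index in $\pi_1(M_i)$ with $M_i$ closed aspherical.
\begin{itemize}
\item Tameness and the covering theorem only apply to hyperbolic $M_i$.
\item Even there, ``subgroup of a virtually free-by-cyclic group'' is unjustified for geometrically finite $K$, since the ambient group has $\mathrm{cd}=3$.
\item Graph, mixed, Seifert and Sol pieces are left open.
\item Your list ``free, surface or $\Z^2$'' is wrong: for example $F_2\times\Z\le\pi_1(S\times S^1)$.
\end{itemize}

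The missing idea is a uniform dimension argument. By Strebel's theorem, an infinite-index subgroup of a $\mathrm{PD}^3$ group has $\mathrm{cd}\le 2$, and $K$ is torsion-free. Take the Scott core of $\widetilde{M}_K$ with spheres capped off. Every prime summand is then $S^2\times S^1$ or has nonempty boundary: a closed aspherical summand would have $\mathrm{cd}=3$, and one with nontrivial finite $\pi_1$ would give torsion. So $K$ is a free product of a free group and fundamental groups of compact irreducible manifolds with nonempty boundary. Each of these is virtually free-by-cyclic by \cite{KL24b}, exactly as in Proposition~\ref{prop:irreducible}, and Theorem~\ref{rewriting_main} with Lemma~\ref{lem:join} finishes.

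This is what the paper's set-up is built for. Proposition~\ref{prop:conditional} is stated for any three-manifold with finitely generated $\pi_1$, so it applies to $\widetilde{M}_H$ itself, and all pieces with boundary are handled at once. By Kurosh and Strebel, the only closed aspherical pieces that can appear have fundamental groups of finite covers of closed pieces of $M$, so the first part covers them.
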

The above does not cover all compact three-manifolds since there exist closed orientable graph manifolds which do not virtually fiber \cite{LYQ97}. Theorem~\ref{thm:GoodPieces} reduces the problem of whether all three-manifold groups admit complete rewriting systems to certain cases, since it implies:

\begin{corollary}\label{thm:3ManReduction} 
If every fundamental group of a closed orientable graph manifold that does not virtually fibre admits a finite complete rewriting system then every fundamental group of a compact three-manifold admits a finite complete rewriting system.
  \end{corollary}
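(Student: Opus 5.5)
The plan is to deduce the corollary from Theorem~\ref{thm:GoodPieces}, together with standard three-manifold topology that reduces arbitrary compact three-manifolds to the orientable case. Suppose every fundamental group of a closed orientable graph manifold that does not virtually fibre admits a finite complete rewriting system, and let $M$ be a compact three-manifold.

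\emph{Orientable case.} I would take the prime decomposition $M = M_1 \# \cdots \# M_n$, so that $\pi_1(M) = \pi_1(M_1) * \cdots * \pi_1(M_n)$. The pieces fall into two kinds: those covered by Theorem~\ref{thm:GoodPieces}, and those that are closed graph manifolds which do not virtually fibre. The first kind admit finite complete rewriting systems by that theorem. The second kind admit them by hypothesis. Finally, finite complete rewriting systems are preserved under free products: take the disjoint union of the alphabets and rules of the factors. There are no overlaps between rules over disjoint alphabets, so confluence and termination both persist. Hence $\pi_1(M)$ admits a finite complete rewriting system. (Closed prime pieces with finite or cyclic fundamental group, such as $S^2\times S^1$, are handled directly or are already covered by the theorem.)

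\emph{Non-orientable case.} This is the main obstacle, because having a finite complete rewriting system is not known to pass to finite index overgroups in general. My first approach would be to repeat the argument with a prime decomposition in the non-orientable setting. The pieces are then $\mathbb{RP}^2$-irreducible pieces, copies of $S^2\tilde{\times}S^1$, or pieces containing two-sided projective planes.

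For pieces with nonempty boundary or infinite $\pi_1$, I would argue as follows. They are not closed graph manifolds unless geometric, and the geometric ones have virtually polycyclic or virtually free-by-cyclic $\pi_1$. I would then appeal to Theorem~\ref{rewriting_main}, or to Groves--Smith, for these groups. This presumes the proof of Theorem~\ref{thm:GoodPieces} proceeds piecewise via JSJ decompositions and the combination theorem, and that this argument goes through for non-orientable irreducible pieces too.

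The only remaining exceptional case would be closed non-orientable graph manifolds. Their orientation double cover is a closed orientable graph manifold. If that cover virtually fibres, the original manifold virtually fibres and is handled by Theorem~\ref{rewriting_main}, since it is virtually free-by-cyclic. Otherwise I would try to show that the non-orientable manifold's group is still obtained from hypothesis-covered pieces via the combination theorem, Theorem~\ref{thm:combination}. I expect this last reduction to be the delicate step.
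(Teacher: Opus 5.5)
Your orientable case is fine and is essentially the paper's argument. You split off prime factors, apply Theorem~\ref{thm:GoodPieces} or the hypothesis to each, and recombine with \cref{lem:join}. The genuine gap is the non-orientable case, which you leave unfinished, and the reason you give for its difficulty is backwards. Groves--Smith (\cref{prop:finite_index}(1)) show that a finite complete rewriting system \emph{does} pass from a finite-index subgroup up to the whole group; what is open is passing \emph{down} to finite-index subgroups. This upward direction is the ``closed under finite extensions'' that the paper invokes alongside free products.

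With it the non-orientable case takes one line. Let $\widetilde M$ be the orientation double cover of $M$. It is a compact orientable three-manifold whose prime pieces are orientable, so any closed graph manifold pieces that do not virtually fibre fall under the hypothesis. Your orientable argument then gives a finite complete rewriting system for $\pi_1(\widetilde M)$. Since $[\pi_1(M):\pi_1(\widetilde M)]=2$, \cref{prop:finite_index}(1) gives one for $\pi_1(M)$. The paper packages this in \cref{prop:conditional}. It passes to a finite-index subgroup $F*(*_i\pi_1(M_i))$ with each $M_i$ irreducible with incompressible boundary \cite[Proposition 2.2]{KL24b}. It treats each factor by \cref{prop:irreducible} or the hypothesis, combines them with \cref{lem:join}, and lifts back up with Groves--Smith. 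This handles non-orientability and compressible boundary uniformly.

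Your piecewise non-orientable sketch is not just unnecessary; it also contains false claims. A closed aspherical three-manifold group has cohomological dimension $3$, whereas free-by-cyclic groups have cohomological dimension at most $2$. So such a group is never virtually free-by-cyclic, and \cref{rewriting_main} cannot be applied to it. The virtually fibred closed case goes through surface-by-cyclic groups instead (\cref{thm:SurfaceByCyclic}, as in \cref{prop:irreducible}). Likewise, closed hyperbolic or $\widetilde{SL_2}$ manifolds are neither virtually polycyclic nor virtually free-by-cyclic. Closed non-geometric graph manifolds have infinite $\pi_1$, so the claim that such pieces are ``not closed graph manifolds unless geometric'' is not meaningful. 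The delicate combination-theorem step you anticipate at the end is never needed.
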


This corollary follows from Theorem~\ref{thm:GoodPieces} since admitting a finite complete rewriting system is closed under free products and finite extensions. See \cref{prop:conditional} for details.

It is proved in \cite{behrstock2008quasi} that the fundamental groups of any two closed irreducible non-geometric graph-manifolds are quasi-isometric. 
However, there are non-geometric closed graph manifold groups that do admit finite complete rewriting systems (see e.g. \cite[Section~6]{Hermiller1999}). 
Combining these observations with \cite[Theorem 0.1]{papasoglu2002quasi}, 
we conclude that another consequence of Theorem~\ref{thm:GoodPieces} is the following: 

\begin{corollary} 
Every three-manifold group is quasi-isometric to a three-manifold group
that admits a finite complete rewriting system.
\end{corollary}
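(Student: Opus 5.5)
The statement is the quasi-isometry corollary, so the plan is to combine Theorem~\ref{thm:GoodPieces} with quasi-isometry rigidity results for the pieces that theorem does not cover.

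The reduction is to prime decompositions. Let $M$ be a compact three-manifold. First pass to the orientation double cover if necessary. Its fundamental group is a finite index subgroup, hence quasi-isometric to $\pi_1(M)$, so we may assume $M$ is orientable.

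Write $\pi_1(M) = \pi_1(M_1) * \cdots * \pi_1(M_n) * F_r$ from the prime decomposition. By \cite[Theorem 0.1]{papasoglu2002quasi}, the quasi-isometry type of a free product of finitely generated groups is determined by the quasi-isometry types of its freely indecomposable factors, up to the usual caveats about trivial and $\Z$ factors. So it suffices to replace each bad factor by a quasi-isometric good one. Here a bad factor is a closed graph manifold $M_i$ that does not virtually fibre.

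Such an $M_i$ is irreducible and, since it does not virtually fibre, non-geometric. Seifert fibred and Sol manifolds virtually fibre, or are handled by Hermiller--Shapiro. Indeed, closed Seifert fibred spaces with infinite $\pi_1$ virtually fibre or are finite covers of $S^2\times S^1$-type pieces; I would check this carefully. By \cite{behrstock2008quasi}, $\pi_1(M_i)$ is then quasi-isometric to $\pi_1(N)$ for a fixed non-geometric closed graph manifold $N$ admitting a finite complete rewriting system. Such an $N$ exists by \cite[Section~6]{Hermiller1999}; alternatively, choose $N$ virtually fibred so that Theorem~\ref{thm:GoodPieces} applies.

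Now let $M'$ be the connected sum obtained by replacing each bad $M_i$ with a copy of $N$. We can arrange that $N$ itself is a good piece, for example a graph manifold that virtually fibres. Then every prime piece of $M'$ avoids the excluded class, and Theorem~\ref{thm:GoodPieces} shows that $\pi_1(M')$ admits a finite complete rewriting system. Papasoglu--Whyte then gives $\pi_1(M)\sim_{QI}\pi_1(M')$.

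The main obstacle is precisely the requirement that $N$ be both quasi-isometric to all non-geometric closed graph manifolds and admit a finite complete rewriting system. I would take $N$ virtually fibred, for instance any graph manifold fibring over the circle, since that is covered by Theorem~\ref{thm:GoodPieces}. This avoids relying on the specific examples of \cite{Hermiller1999}.

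One also has to check the edge cases of Papasoglu--Whyte, namely when there is only one factor or only $\Z$ factors. When there is a single bad factor, the statement is directly the Behrstock--Neumann quasi-isometry.
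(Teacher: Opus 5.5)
Your route is the paper's. You use Behrstock--Neumann \cite{behrstock2008quasi} to trade each offending closed graph manifold factor for one fixed non-geometric closed graph manifold $N$ with a finite complete rewriting system. Papasoglu--Whyte \cite{papasoglu2002quasi} then carries the quasi-isometry across the free product, and Theorem~\ref{thm:GoodPieces} handles the new manifold.

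Your two deviations are sound:
\begin{enumerate}
\item Passing to the orientation double cover is a step the paper's sketch omits but needs, since Theorem~\ref{thm:GoodPieces} assumes orientability.
\item Taking $N$ fibred, so that its rewriting system comes from Theorem~\ref{thm:SurfaceByCyclic} rather than the examples in \cite[Section~6]{Hermiller1999}, makes the argument self-contained. However, $N$ must be non-geometric, so \emph{any graph manifold fibring over the circle} is too loose: $\Sigma_g\times S^1$ fibres but is not quasi-isometric to a non-geometric graph manifold. The mapping torus of a Dehn twist about an essential curve on a closed surface of genus at least $2$ does the job.
\end{enumerate}

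One step fails as written. A closed graph manifold that does not virtually fibre need not be non-geometric. Your claim that closed Seifert fibred spaces with infinite $\pi_1$ virtually fibre is false. A closed $\widetilde{SL_2}$-manifold, such as the unit tangent bundle of a closed hyperbolic surface, never virtually fibres:
\begin{enumerate}
\item its non-zero Euler number rules out horizontal fibre surfaces in every finite cover;
\item its hyperbolic base orbifold rules out vertical torus fibres.
\end{enumerate}
Behrstock--Neumann does not apply to such a factor. Swapping it for $N$ changes the quasi-isometry type, since it is quasi-isometric to $\mathbb{H}^2\times\mathbb{R}$.

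The repair is the alternative you mention in passing. Replace only the non-geometric bad factors, and keep the geometric ones in $M'$: these have finite complete rewriting systems by Hermiller--Shapiro, exactly as in the geometric case of the proof of Proposition~\ref{prop:irreducible}. Then the argument proving Theorem~\ref{thm:GoodPieces} via Proposition~\ref{prop:conditional} still gives the rewriting system for $\pi_1(M')$, because every closed irreducible piece arising there is either virtually fibred or geometric. You cannot use the literal hypothesis of Theorem~\ref{thm:GoodPieces} here, since under the paper's convention it excludes $\widetilde{SL_2}$ pieces. The paper's own one-line derivation tacitly reads \emph{closed graph manifold that does not virtually fibre} as non-geometric in the same way.
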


It is unknown whether the property of having a finite complete rewriting system is a quasi-isometry invariant. It is also unknown whether this property is preserved when passing to a finite index subgroup of a group. 

Recall that a rational cross-section for a group is a regular language of unique representatives for elements of the group. 
See \cite{Bodart2024} for recent results on groups with rational cross sections, including the first examples of finitely presented groups with solvable word problem and without rational cross-sections.
Since the property of admitting a rational cross section is passed to free factors (see e.g. \cite[page 2]{Bodart2024}) and since any group admitting a finite complete rewriting system has a rational cross section, one immediate consequence of Theorem~\ref{thm:3-manifold:intro} is the following:  
  
\begin{corollary} \label{cor:3-manifold} 
Every fundamental group of a compact three-manifold 
has a rational cross section.  \end{corollary}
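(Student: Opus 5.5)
The plan is to deduce this from Theorem~\ref{thm:3-manifold:intro}, using two facts: a group admitting a finite complete rewriting system has a rational cross section, and the existence of a rational cross section passes from a free product to its free factors. Let $M$ be a compact three-manifold. By Theorem~\ref{thm:3-manifold:intro}, the group $\pi_1(M)\ast\Z$ admits a finite complete rewriting system $(A,R)$ over some finite alphabet $A$ that maps onto a monoid generating set. The set of $R$-irreducible words is then a regular language. Indeed, it is the complement of $A^*LA^*$, where $L$ is the finite set of left-hand sides of $R$. Because $R$ is complete, this set contains exactly one representative of each element of $\pi_1(M)\ast\Z$, so it is a rational cross section.

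Next I pass this cross section to the free factor $\pi_1(M)$, following \cite[page 2]{Bodart2024}. Fix a finite generating set for $\pi_1(M)$ and write each letter of $A$ as a word over this generating set together with $t^{\pm1}$, where $t$ generates $\Z$. Let $\rho$ be the retraction $\pi_1(M)\ast\Z\to\pi_1(M)$ that kills $t$. Let $N$ be the regular cross section of $\pi_1(M)\ast\Z$ obtained above. The set
\[
N'=\{w\in N : w \text{ represents an element of } \pi_1(M)\}
\]
is a set of unique representatives for the elements of $\pi_1(M)$. Applying the monoid homomorphism induced by $\rho$ letter by letter sends each such $w$ to a word over the generators of $\pi_1(M)$ representing the same element. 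Regular languages are closed under homomorphic images, so $N'$ would give a cross section once two points are settled: that $N'$ is itself regular, and that the homomorphic image stays injective on elements.

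The main obstacle is the regularity of $N'$, since membership of an arbitrary subgroup need not cut out a regular subset of a cross section. Here I would use the structure of the free product. Every element of $\pi_1(M)\ast\Z$ has a unique reduced form as an alternating product of nontrivial elements of $\pi_1(M)$ and nonzero powers of $t$. I would track this data with a finite automaton, or alternatively simply cite the known fact from \cite[page 2]{Bodart2024} that rational cross sections pass to free factors. If the direct argument is awkward, the cleaner route is to cite that statement. Either way, $\pi_1(M)$ has a rational cross section, which proves Corollary~\ref{cor:3-manifold}.
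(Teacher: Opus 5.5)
Your proposal is correct and matches the paper's argument. You apply Theorem~\ref{thm:3-manifold:intro}, observe that the irreducible words of a finite complete rewriting system form a rational cross section, and pass to the free factor $\pi_1(M)$ by citing \cite[page 2]{Bodart2024}. The paper also relies only on that citation, so the direct argument via $N'$ is not needed; its regularity is the step you rightly flag as nontrivial and leave unproved.
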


In fact we can apply our results to say more than this.    
As explained in the introduction to \cite{Hermiller2018}, one of the original motivations for the definition of automatic groups was to find algorithmic and topological characteristics of the Cayley graph satisfied by three-manifold fundamental groups, to allow computations to be carried out in these groups. There are, however, three-manifold groups that are not automatic.  

Autostackable groups, which were introduced by Brittenheim--Hermiller--Holt in the paper \cite{Brittenham2014}, are a generalisation of  
both automatic groups and groups with finite complete rewriting systems. Autostackability is a topological property of the Cayley graph of a group, together with a certain language theoretic restriction on this property. 
We refer the reader to \cite{Brittenham2014} for the formal definition of this notion. 
Among other things, an autostackable structure for a finitely generated group gives a solution to the word problem, 
a rational cross section, 
a recursive algorithm for building van Kampen diagrams, and implies tame combability; see 
\cite{Brittenham2014, brittenham2015uniform}.

In \cite[Theorem 6.2.]{Hermiller2018} Brittenheim--Hermiller--Susse show that if $M$ be a compact three-manifold with incompressible toral boundary then $\pi_1(M)$ is autostackable. In particular, if $M$ is closed, then $\pi_1(M)$ is autostackable. Applying our results we will extend this result to the general case by proving the following.

\begin{theorem}\label{thm:autostackable:intro} 
The fundamental group of any compact three-manifold is autostackable.
\end{theorem}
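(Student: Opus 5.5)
We will prove that $\pi_1(M)$ is autostackable for every compact three-manifold $M$ by reducing to pieces for which autostackability is already known. We then assemble those pieces using closure properties of autostackable groups established by Brittenham--Hermiller--Holt and Brittenham--Hermiller--Susse.

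\textbf{Step 1: closure properties.} First we record the facts we rely on: every group admitting a finite complete rewriting system is autostackable; autostackability is closed under free products and under finite extensions (in particular it passes to finite index overgroups, cf. \cite{Brittenham2014}); and it is closed under the graph-of-groups constructions used in \cite{Hermiller2018}.

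\textbf{Step 2: reduction to the orientable, prime case.} If $M$ is non-orientable, its orientation double cover $\tilde M$ gives $\pi_1(\tilde M)$ as an index-two subgroup of $\pi_1(M)$. Since autostackability passes to finite extensions, we may assume $M$ is orientable. We may also cap off sphere boundary components, which does not change $\pi_1$. The Kneser--Milnor prime decomposition then writes $\pi_1(M)$ as a free product of fundamental groups of prime pieces. Closure under free products reduces us to a single prime piece $P$. If $P$ is $S^1\times S^2$, then $\pi_1(P)=\Z$, which is fine.

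\textbf{Step 3: the irreducible pieces.} Suppose $P$ is irreducible. If $\partial P$ is empty or consists of incompressible tori, then $\pi_1(P)$ is autostackable by \cite[Theorem 6.2]{Hermiller2018}. Otherwise $P$ has a compressible boundary component or a boundary component of genus at least $2$; we handle these cases in turn.

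\begin{itemize}
\item If $P$ has compressible boundary, compressing along disks splits $\pi_1(P)$ as a free product of fundamental groups of manifolds with incompressible boundary and free groups. This reduces us to the incompressible-boundary case.
\item If $P$ has incompressible boundary containing higher-genus surfaces and is not a torus-boundary case, we use the JSJ/characteristic-submanifold decomposition and hyperbolization with totally geodesic (or quasi-Fuchsian) boundary. By Wise and Agol, such groups are virtually compact special and, having nonempty higher-genus boundary, are virtually free-by-cyclic. Indeed, a Haken manifold with nonempty boundary that is not a union of tori has a virtual fibration over the circle with fibre a surface with boundary, and its group is virtually \{free\}-by-cyclic. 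Theorem~\ref{rewriting_main} then gives a finite complete rewriting system for $\pi_1(P)$, hence autostackability.
\end{itemize}

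An alternative for the boundary case is to double $P$ along its higher-genus boundary, or glue hyperbolic pieces to obtain a manifold with toral boundary. Then $\pi_1(P)$ sits in a controlled way, but passing to subgroups is not available for autostackability, so we prefer the virtually free-by-cyclic route.

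\textbf{Main obstacle.} The hardest step is justifying that every irreducible piece with incompressible boundary containing a higher-genus component, including mixed graph-manifold/hyperbolic JSJ situations, has a virtually free-by-cyclic fundamental group. Our first attempt uses Wise's theorem that such manifolds are virtually compact special, together with the result that compact special groups of cohomological dimension $2$ whose boundary has higher genus virtually fibre with free fibre. The latter is Kielak's RFRS fibring criterion: the first $L^2$-Betti number vanishes because $\chi(P)<0$ forces $b_1^{(2)}$ and boundary contributions to cancel, more precisely because $P$ is aspherical with $\chi(P)=\tfrac12\chi(\partial P)$. If this fails for graph-manifold pieces with boundary, we instead decompose along the JSJ tori. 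We then apply Theorem~\ref{thm:main_free} and the combination Theorem~\ref{thm:combination} to the pieces, and use that $G*\Z$ having a finite complete rewriting system gives autostackability of $G$ via the free factor property of autostackable structures.
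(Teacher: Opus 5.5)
There is a genuine gap in the case you flag as the main obstacle. Your overall skeleton matches the paper's. You reduce via free products and finite-index overgroups to irreducible pieces. You use \cite[Theorem~6.2]{Hermiller2018} for closed pieces (or pieces with incompressible toral boundary). For the remaining pieces you aim for a finite complete rewriting system from Theorem~\ref{rewriting_main}, which gives autostackability.

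The problem case is an irreducible $P$ whose incompressible boundary has a component of genus at least $2$. Both justifications you give there are false.
\begin{enumerate}
\item No finite cover of such a $P$ fibres over $S^1$ with compact fibre. A fibration with fibre $F$ would make $\partial P$ fibre over $S^1$ with fibre $\partial F$, so every boundary component would be a torus. But finite covers of a genus-$\geqslant 2$ boundary component still have genus $\geqslant 2$.
\item The first $L^2$-Betti number does not vanish. $P$ is aspherical with a $2$-dimensional spine, and Lott--L\"uck give $b_1^{(2)}(\pi_1 P) = -\chi(P) = -\tfrac12\chi(\partial P) > 0$. So Kielak's fibring criterion shows that $\pi_1(P)$ is \emph{not} virtually \{finitely generated free\}-by-cyclic. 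In fact $\chi(P)\neq 0$ already rules this out.
\end{enumerate}

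The missing idea is that you must allow infinitely generated free kernels, which is exactly what Theorem~\ref{rewriting_main} is built to handle. The paper argues uniformly for every irreducible piece with non-empty incompressible boundary, tori included:
\begin{itemize}
\item asphericity plus a $2$-dimensional spine gives cohomological dimension at most two;
\item the Kielak--Linton theorem \cite{KL24b} (these groups are virtually RFRS with $b_2^{(2)}=0$) then shows $\pi_1(P)$ is virtually \{free\}-by-cyclic, with kernel typically of infinite rank;
\item Theorem~\ref{rewriting_main} together with Proposition~\ref{prop:finite_index} then gives a finite complete rewriting system, hence autostackability.
\end{itemize}
The paper uses \cite{Hermiller2018} only for the closed pieces.

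Your fallback does not close the gap either. Decomposing along JSJ tori and applying Theorem~\ref{thm:combination} only yields a finite complete rewriting system for $\pi_1(P) * F_k$ with $k\geqslant 1$. No known result says autostackability passes from $G*\Z$ to the free factor $G$, and the analogous question for finite complete rewriting systems is left open at the end of the paper.
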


This result gives a common framework to solve the word problem in arbitrary finitely generated three-manifold groups using finite state automata.

In a different direction, Bridson proved \cite{bridson1993combings} that the fundamental group of any compact three-manifold is asynchronously combable. It is interesting to compare Theorem~\ref{thm:autostackable:intro} and the fact that all three-manifold groups have a rational cross section with results in \cite{BG1996} showing that if one in addition insists that a combing of the fundamental group of a three-manifold satisfies the asynchronous fellow-traveller property then the combing cannot be context-free. However, their results show that every three-manifold group admits a combing that satisfies the asynchronous fellow-traveller property and is an indexed language.

Autostackability also has an alternative characterisation in terms of rewriting systems. It is shown in \cite{Brittenham2014} that a group $G$ is autostackable if and only if $G$ admits a synchronously regular bounded convergent prefix-rewriting system. Such a system consists of an infinite set of rewrite rules $(u,v)$ which are a synchronously regular language, and where only rewritings of the form $u \beta \rightarrow v \beta$ for $(u,v) \in R$ and $\beta \in A^*$ are allowed (in contrast to an ordinary rewriting system that allows     
rewritings of the form $\alpha u \beta \rightarrow \alpha v \beta$ for $(u,v) \in R$ and $\alpha, \beta \in A^*$). 
Saying that the prefix-rewriting system is \emph{bounded} means that the set $A$ is finite and there is a constant $k$ such that for every rule $u \rightarrow v$ in $R$ we can decompose the words $u = w \gamma$ and $v = w \delta$ with $w, \gamma, \delta \in A^*$ such that $|\gamma| + |\delta| \leq k$. We refer the reader to \cite[Section~1]{Brittenham2014} for the full definition of a   
synchronously regular bounded convergent prefix-rewriting system, and the proof that it is equivalent to being autostackable. Hence another consequence of the results in this paper is the following: 

\begin{corollary} 
Every fundamental group of a compact three-manifold admits a bounded regular convergent prefix-rewriting system.
\end{corollary}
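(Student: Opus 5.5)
The corollary should follow directly from Theorem~\ref{thm:autostackable:intro} together with the characterisation of autostackability in \cite{Brittenham2014}. The plan has two steps.

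First, let $M$ be a compact three-manifold. By Theorem~\ref{thm:autostackable:intro}, $\pi_1(M)$ is autostackable with respect to some finite generating set $A$.

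Second, I invoke the theorem of Brittenham--Hermiller--Holt \cite[Section~1]{Brittenham2014}: a group is autostackable over a finite generating set $A$ if and only if it admits a synchronously regular bounded convergent prefix-rewriting system over $A$. Applying the forward implication to $\pi_1(M)$ gives such a system. It is finite-alphabet and bounded in the sense recalled above, convergent, and its rule set is a synchronously regular language. Synchronous regularity of the set of pairs $(u,v)$ implies regularity of the rule language in the weaker sense intended by the word ``regular'' in the statement. So $\pi_1(M)$ admits a bounded regular convergent prefix-rewriting system.

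All of the real content lies in Theorem~\ref{thm:autostackable:intro}, and I expect that to be the main obstacle. Its proof presumably goes through the prime decomposition. One reduces to the pieces, using closure of autostackability under free products and finite extensions (or the combination theorem, Theorem~\ref{thm:combination}, in the rewriting setting). The pieces are then handled in cases: manifolds with incompressible toral boundary are covered by \cite[Theorem 6.2]{Hermiller2018}, and the remaining pieces by Theorem~\ref{thm:3-manifold:intro} and Theorem~\ref{thm:GoodPieces}. For the corollary itself, though, the only remaining point is to check that the notion of ``regular'' in the statement matches, or is implied by, the synchronous regularity in \cite{Brittenham2014}, and this is immediate.
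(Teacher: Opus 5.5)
Your proposal is correct and follows the paper's argument: the corollary comes straight from Theorem~\ref{thm:autostackable:intro} via the Brittenham--Hermiller--Holt equivalence \cite{Brittenham2014} between autostackability and admitting a synchronously regular bounded convergent prefix-rewriting system. One small caution about your side remark on how Theorem~\ref{thm:autostackable:intro} is proved: Theorem~\ref{thm:3-manifold:intro} would not help there, since it only concerns $\pi_1(M)\ast\Z$ and autostackability is not known to pass to free factors; the paper instead combines \cref{prop:irreducible} for the pieces with boundary, \cite[Theorem 6.2]{Hermiller2018} for the closed irreducible pieces, and closure of autostackability under free products and finite-index overgroups.
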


\section{Combining finite complete rewriting systems}
\label{sec:combining}

\subsection{Rewriting systems}

Let $X$ be a finite set, denote by $X^*$ the free monoid generated by $X$. A \emph{rewriting system} is a pair $(X, \mathcal{R})$ where $X$ is a finite set and $\mathcal{R}$ is a set of ordered pairs $(u, v)$ from $X^*\times X^*$, called \emph{rewriting rules}. Define a relation $\to_{\mathcal{R}}$ on words such that $x\to_{\mathcal{R}} y$ if $x = x_1ux_2$ and $y = y_1vy_2$ for some $(u, v)\in \mathcal{R}$. Then define the relation $ \to_{\mathcal{R}}^*$ on words such that $x\to_{\mathcal{R}}^*y$ if there is a sequence of words $x = x_0, x_1, \ldots, x_n = y$ such that $x_i\to_{\mathcal{R}} x_{i+1}$ for all $0\leqslant i<n$. An element $x$ is \emph{irreducible} if there is no $y\neq x$ such that $x\to_{\mathcal{R}} y$. It is \emph{reducible} otherwise.

A rewriting system $(X, \mathcal{R})$ is \emph{finite} if $X$ and $\mathcal{R}$ are finite. It is \emph{terminating} if there is no infinite sequence $x_0\to_{\mathcal{R}}x_1\to_{\mathcal{R}}\ldots$. It is \emph{confluent} if for all $x, y, z$ such that $x\to_{\mathcal{R}}^*y, z$, there exists some $\bar{x}$ such that $y, z\to_{\mathcal{R}}^*\bar{x}$. It is \emph{complete} if it is both terminating and confluent. In a complete rewriting system, each word has a \emph{normal form} which is obtained by simply applying reductions until no more reductions are possible.

\subsection{Rewriting systems for groups}

A group $G$ has a \emph{finite complete rewriting system} if there is a finite complete rewriting system $(X, \mathcal{R})$ such the monoid defined by the monoid presentation  $\langle X \mid \mathcal{R} \rangle$ is isomorphic to $G$. Equivalently, this means that there is a monoid epimorphism $\pi\colon X^*\to G$ such that $\pi(x) = \pi(y)$ if and only if the normal forms of $x$ and $y$ are equal as words in $X^*$.

We shall denote by $F_k$ the free group of rank $k$.

\begin{proposition}
\label{prop:finite_index}
Let $G$ be a group and let $H\leqslant G$ be a finite index subgroup. Then:
\begin{enumerate}
\item If $H$ admits a finite complete rewriting system, then so does $G$.
\item If $H*F_n$ admits a finite complete rewriting system, then $G*F_k$ admits a finite complete rewriting system for any $k\geqslant n/[G:H]$.
\end{enumerate}
\end{proposition}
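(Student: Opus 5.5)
For part (1) I would give the classical coset-representative construction, in the spirit of Groves--Smith \cite{Groves1993}. Let $(X,\mathcal{R}_H)$ be a finite complete rewriting system for $H$, with $\pi_H\colon X^*\to H$. Choose a right transversal $\{1=t_0,t_1,\dots,t_{d-1}\}$ for $H$ in $G$, where $d=[G:H]$. Introduce new letters $T=\{t_1,\dots,t_{d-1}\}$; the trivial representative gets no letter. The generating set is $X\sqcup T$. For each $y\in X\cup T$ and each $t\in T$, the product $ty$ in $G$ can be written uniquely as $h\,t'$ with $h\in H$ and $t'$ a coset representative. Let $u\in X^*$ be the $\mathcal{R}_H$-normal form of $h$. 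I add the rule $ty\to u\,t'$, deleting $t'$ when $t'=1$. I add the analogous rule $y\to u\,t'$ for each $y\in X$ only where needed; since $X$ generates $H$ this is not needed. Finally I include $\mathcal{R}_H$. This is a finite set of rules, and every rule holds in $G$. The letters of $X\cup T$ generate $G$ as a monoid, because $H$ is generated by $X$ and every element is $h t_i$.

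The key steps are termination and then confluence. For termination I would use an ordering that first compares words by the number of $T$-letters. Among words with the same number of $T$-letters, it compares the sequence of $X$-blocks separating the $T$-letters, read from right to left, using the termination ordering for $\mathcal{R}_H$ on the last block and then the length or position data. A rule $ty\to ut'$ either lowers the number of $T$-letters or moves a $T$-letter strictly to the right. This makes the block to its right shorter, at the cost of lengthening a block further left. A right-to-left lexicographic comparison of blocks, ordered first by length and then by $\mathcal{R}_H$, therefore decreases. I expect this choice of well-founded ordering to be the only fiddly point.

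For confluence I avoid critical-pair analysis. An irreducible word contains no factor $ty$ with $t\in T$, so it has the form $w$ or $w t$ with $w$ an $\mathcal{R}_H$-irreducible word in $X^*$. Distinct such words represent distinct elements of $G$, by uniqueness of the coset decomposition and completeness of $\mathcal{R}_H$. A terminating system in which each congruence class contains exactly one irreducible word is confluent: any two reducts of $x$ reduce to irreducibles in the same class, hence equal. So $G\cong\langle X\cup T\mid \mathcal{R}\rangle$ via a finite complete system.

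For part (2), let $G$ act on $G/H$ and extend this to an action of $G*F_k$ in which $F_k$ acts trivially. Let $K$ be the stabiliser of the coset $H$; it has index $d$. I would compute $K$ by Kurosh, or by Bass--Serre theory applied to the covering of the wedge of a $K(G,1)$ with $k$ circles. Each of the $d$ cosets contributes a conjugate $t_iF_kt_i^{-1}$, and the $G$-part of the cover contributes only $H$ with no extra free rank, since it is connected with a single lift. This gives $K\cong H*F_{kd}$. If $kd\geqslant n$, then $K\cong (H*F_n)*F_{kd-n}$. A free product of groups with finite complete rewriting systems has one, by taking the disjoint union of the systems, and $F_m$ has the standard system. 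Hence $K$ admits a finite complete rewriting system, and part (1) applied to $K\leqslant G*F_k$ finishes the proof. The only subtle point here is verifying the rank $kd$ in the Kurosh decomposition.
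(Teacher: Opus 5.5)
Your part (2) is the paper's argument. The paper also passes to the index-$[G:H]$ subgroup $\langle H, F_k^t : t\in T\rangle = H*(*_{t\in T}F_k^t)\cong H*F_{k[G:H]}$ of $G*F_k$, splits off $H*F_n$ as a free factor, and applies \cref{lem:join} and part (1). You are in fact more careful than the paper about the case $k[G:H]>n$. For part (1) the paper only cites Groves--Smith. Your coset-transversal construction, with confluence deduced from each congruence class containing exactly one irreducible, is the standard proof of that result, and those parts are fine.

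The termination ordering, however, fails as you state it. You order each $X$-block "first by length and then by $\mathcal{R}_H$", but the rules of $\mathcal{R}_H$ need not be length non-increasing. Termination of $\mathcal{R}_H$ only supplies some well-founded order. Complete systems that terminate under wreath-product orderings, such as the usual ones for Baumslag--Solitar groups, contain length-increasing rules. If such a rule is applied inside the last block $w_m$, that block gets longer, so your right-to-left lexicographic measure increases.

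The fix is to reverse the priority. Compare blocks by the pair $(c(w),|w|)$ lexicographically, where $c(w)$ is the maximal length of an $\mathcal{R}_H$-derivation starting at $w$; this is finite by K\"onig's lemma, as in the proof of \cref{thm:combination}. Then every rule decreases the measure:
\begin{enumerate}
\item An $\mathcal{R}_H$-step strictly lowers $c$ of the block it acts on and leaves the other blocks unchanged.
\item A rule $ty\to ut'$ with $y\in X$ and $t'\neq 1$ replaces the block $yw''$ to the right of $t$ by $w''$. Here $c(w'')\leqslant c(yw'')$, because any derivation from $w''$ can be prefixed by $y$, and the length strictly drops. The blocks further right are untouched, and the lengthened block $w_{i-1}u$ on the left does not matter under a right-to-left comparison.
\item Every other rule strictly lowers the number of $T$-letters.
\end{enumerate}
Comparing the number of $T$-letters first, and then tuples in $(\mathbb{N}\times\mathbb{N})^{m+1}$ right-to-left lexicographically, gives a well-founded order. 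With this measure the rest of your proof goes through.
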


\begin{proof}
The first statement is due to Groves--Smith \cite{Groves1993}. For the second statement, consider the group $G*F_k$. Let $T$ be a left transversal for $G/H$. Then the subgroup $\left\langle H, \{F_k^t\}_{t\in T}\right\rangle = H*(*_{t\in T}F_k^t)\cong H*F_n$, where $n = [G:H]k$, has finite index in $G*F_k$. Now the result follows by combining \cref{lem:join} with the first statement.
\end{proof}

\subsection{Relative and compatible rewriting systems}

We say that a group $G$ admits a \emph{finite complete rewriting system (left) relative to a collection of subgroups $\{G_{\alpha}\}_{\alpha\in A}$} if it admits a finite complete rewriting system over a monoid generating set $X = X_0\cup \bigcup_{\alpha\in A}X_{\alpha}$ so that:
\begin{enumerate}
\item $G_{\alpha} = \pi(X_{\alpha}^*)$ for each $\alpha\in A$.
\item If $v, w\in X^*$ are irreducible words so that $G_{\alpha}\cdot \pi(v) = G_{\alpha}\cdot \pi(w)$ for some $\alpha$, then $v = v_1u$ and $w = w_1u$ with $v_1, w_1\in X_{\alpha}^*$.
\end{enumerate}
\emph{Right relative finite complete rewriting systems} are defined similarly, but with right cosets replacing left cosets.

\begin{remark}
Note that since the empty word $\epsilon$ is an irreducible word, in the above definition we also have that if $w\in X^*$ is irreducible and $\pi(w)\in G_{\alpha}$, then $w\in X_{\alpha}^*$.
\end{remark}

We shall also need a weakening of relative finite complete rewriting systems. At first the definition will appear artificial, but when trying to build finite complete rewriting systems for graphs of groups it will prove to be rather useful.

Let $(X, \mathcal{R})$ be a finite complete rewriting system for $G$ and let $\{G_{\alpha}\}_{\alpha\in A}$ be a collection of finitely generated subgroups of $G$. We say that $(X, \mathcal{R})$ is \emph{(left) compatible with $\{G_{\alpha}\}_{\alpha\in A}$} if for each $\alpha$ there is a finite set $X_\alpha$, an epimorphism $\pi_\alpha\colon X_\alpha^*\to G_{\alpha}\leqslant G$ and a finite set of rewriting rules $\mathcal{R}_\alpha$ over the alphabet $X\cup X_{\alpha}\cup \{t_{\alpha}\}$, where $t_{\alpha}$ is a new letter, satisfying the following:
\begin{enumerate}
\item The rewriting system $(X\cup X_{\alpha}\cup \{t_{\alpha}\}, \mathcal{R}\cup \mathcal{R}_{\alpha})$ is terminating.
\item Each rule in $\mathcal{R}_\alpha$ is of the form $(t_\alpha u, vt_\alpha w)$, where $u, w\in X^*$, $v\in X^*_\alpha$ and $\pi(u) = \pi_\alpha(v)\pi(w)$.
\item If $t_\alpha u$ and $t_\alpha v$ are irreducible for $(X\cup X_\alpha\cup\{t_\alpha\}, \mathcal{R}\cup \mathcal{R}_\alpha)$ with $u, v\in X^*$, then $u = v$ if and only if $G_{\alpha}\cdot\pi(u) = G_{\alpha}\cdot \pi(v)$.
\end{enumerate}
\emph{Right compatibility} is defined similarly.

\begin{remark}
\label{rem:coset_reps}
It follows from the definition that the set of words $\mathcal{W}_{\alpha}\subseteq X^*$ so that $t_{\alpha}w$ is irreducible for the rewriting system $(X\cup X_\alpha\cup\{t_\alpha\}, \mathcal{R}\cup \mathcal{R}_\alpha)$ where $w\in \mathcal{W}_{\alpha}$, is a complete set of left $G_{\alpha}$-coset representatives.
\end{remark}

\begin{remark}
If $w_1t_{\alpha}w_2$ is a word with $w_1\in X_{\alpha}$ and $w_2\in X^*$, we see by definition that if $w_1t_{\alpha}w_2\to_{\mathcal{R}\cup \mathcal{R}_H}^*w$, then $w = w_1't_{\alpha}w_2'$ with $w_1'\in X_{\alpha}^*$, $w_2'\in X^*$ and we have that $\pi_{\alpha}(w_1)\pi(w_2) = \pi_{\alpha}(w_1')\pi(w_2')$.
\end{remark}

\begin{remark}
If $G$ admits a finite complete rewriting system relative to $\{G_{\alpha}\}_{\alpha\in A}$, then it certainly admits a finite complete rewriting system compatible with $\{G_{\alpha}\}_{\alpha\in A}$. Indeed, we may define $\mathcal{R}_{\alpha} = \{(t_{\alpha}u, ut_{\alpha}) \mid u\in X_{\alpha}\}$.
\end{remark}

Before moving to the main result of the section, we may establish some straightforward combination results for compatible rewriting systems.

\begin{lemma}
\label{lem:join}
If $(X_1, \mathcal{R}_1)$ and $(X_2, \mathcal{R}_2)$ are finite complete rewriting systems for $G_1$ and $G_2$, then $(X_1\cup X_2, \mathcal{R}_1\cup \mathcal{R}_2)$ is a finite complete rewriting system for $G_1*G_2$.

Moreover, if $(X_i, \mathcal{R}_i)$ is compatible with $\{G_{\alpha}\}_{\alpha\in A_i}$, then $(X_1\cup X_2, \mathcal{R}_1\cup \mathcal{R}_2)$ is compatible with $\{G_{\alpha}\}_{\alpha\in A_i}$.
\end{lemma}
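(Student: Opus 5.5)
We take $X_1$ and $X_2$ to be disjoint alphabets. First we check that the union system presents $G_1*G_2$. The monoid presented by $\langle X_1\cup X_2\mid \mathcal{R}_1\cup\mathcal{R}_2\rangle$ is the monoid free product of the monoids $\langle X_i\mid\mathcal{R}_i\rangle\cong G_i$. The monoid free product of two groups is the group free product, because every generator of either factor is invertible in that factor.

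\emph{Termination.} Every left-hand side of a rule of $\mathcal{R}_i$ lies in $X_i^*$. A word $w\in(X_1\cup X_2)^*$ factors uniquely into maximal syllables, each lying in $X_1^+$ or $X_2^+$. A rule application occurs inside one syllable and replaces a subword by a word over the same alphabet. It may shorten that syllable or empty it, in which case the neighbouring syllables merge. So we first show the number of syllables never increases and drops whenever a syllable becomes empty. We then measure $w$ by the number of syllables together with the multiset of syllables, compared by the multiset extension of the well-founded relations $\to^+_{\mathcal{R}_i}$. When syllables merge, the syllable count strictly drops. Otherwise one syllable is replaced by a $\to_{\mathcal{R}_i}$-successor. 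Ordering lexicographically by (syllable count, multiset) gives a well-founded order that decreases at every step, so the system terminates.

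\emph{Confluence.} By Newman's lemma it suffices to check critical pairs. Every overlap between two left-hand sides lies inside a single alphabet $X_i$. Such an overlap is therefore a critical pair of $\mathcal{R}_i$ alone, and it resolves by the confluence of $\mathcal{R}_i$. Alternatively, local confluence can be seen directly: two rule applications in disjoint positions commute, and overlapping ones live in one syllable of one system.

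\emph{Compatibility.} Suppose $(X_1,\mathcal{R}_1)$ is compatible with $\{G_\alpha\}_{\alpha\in A_1}$ via the data $X_\alpha,\pi_\alpha,\mathcal{R}_\alpha$. We keep the same data, so condition (2) holds as before. For (1), the new system $(X_1\cup X_2\cup X_\alpha\cup\{t_\alpha\},\ \mathcal{R}_1\cup\mathcal{R}_2\cup\mathcal{R}_\alpha)$ terminates by the same syllable argument. Here we treat $X_1\cup X_\alpha\cup\{t_\alpha\}$ as one alphabet carrying the terminating system $\mathcal{R}_1\cup\mathcal{R}_\alpha$, and $X_2$ as the other; the left-hand sides $t_\alpha u$ have $u\in X_1^*$.

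The main point is condition (3). Let $t_\alpha u$ be irreducible with $u\in(X_1\cup X_2)^*$. Write $u=u_1u'$, where $u_1\in X_1^*$ is the maximal $X_1$-prefix and $u'$ is empty or begins with a letter of $X_2$. Then $t_\alpha u_1$ is irreducible for $\mathcal{R}_1\cup\mathcal{R}_\alpha$, and $u'$ is an irreducible normal form, so it has reduced free-product form.

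Now suppose $G_\alpha\pi(u)=G_\alpha\pi(v)$, with $v=v_1v'$ decomposed in the same way. In $G_1*G_2$ we get $\pi(u')=\pi(u_1)^{-1}g\,\pi(v_1)\pi(v')$ for some $g\in G_\alpha\leq G_1$. The left side and $\pi(v')$ are reduced words that start outside $G_1$ or are trivial. Uniqueness of reduced forms in the free product therefore forces $\pi(u')=\pi(v')$ and $\pi(u_1)^{-1}g\,\pi(v_1)=1$. Since $u'$ and $v'$ are normal forms, $u'=v'$. The second equation gives $G_\alpha\pi(u_1)=G_\alpha\pi(v_1)$, so $u_1=v_1$ by compatibility of $\mathcal{R}_1$. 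Hence $u=v$. The converse direction of (3) is trivial. The symmetric argument handles $\alpha\in A_2$.
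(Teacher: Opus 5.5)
Your proof is correct and takes the same route as the paper, whose entire proof is the remark that the lemma follows from the definitions together with the normal form theorem for free products. You supply the details the paper omits: termination via the syllable-count/multiset measure, local confluence because critical pairs stay inside one alphabet, and condition (3) via the splitting $u = u_1u'$ combined with uniqueness of reduced forms in $G_1*G_2$.
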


\begin{proof}
This follows from the definitions combined with the normal form theorem for free products.
\end{proof}

\begin{proposition}
\label{prop:prod}
Let $(X, \mathcal{R})$ be a finite complete rewriting system for a group $G$. The following rewriting system:
\[
(X\cup \{z^{\pm1}\}, \mathcal{R}\cup \{(xz, zx), (xz^{-1}, z^{-1}x) \mid x\in X\})
\] 
is a finite complete rewriting system for $\Z\times G$. 

Suppose further that $(X, \mathcal{R})$ is compatible with a finite collection $\{G_{\alpha}\}_{\alpha\in A}$ of finitely generated subgroups of $G$ and let $\{k_{\alpha}\}_{\alpha\in A}$ be a collection of non-zero integers, then the above rewriting system is compatible with $\{k_{\alpha}\Z\times G_{\alpha}\}_{\alpha\in A}$.
\end{proposition}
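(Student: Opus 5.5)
The plan is to prove the two assertions separately: termination by a well-founded lexicographic measure, confluence by Newman's lemma and critical pairs, and compatibility by building explicit rule sets $\mathcal{R}'_\alpha$. Write $\mathcal{R}'$ for the rules $\mathcal{R}$, the commutation rules $(xz^{\pm1},z^{\pm1}x)$, and the free cancellation rules $(zz^{-1},\epsilon)$, $(z^{-1}z,\epsilon)$. The cancellation rules are not listed in the statement, but without them the monoid presented is not a group, so I will treat them as implicit.

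\emph{Termination.} Write a word as $u_0z^{e_1}u_1\cdots z^{e_m}u_m$ with $u_i\in X^*$. An $\mathcal{R}$-step rewrites inside one $u_i$. A commutation step deletes the last letter of some $u_{i-1}$ and prepends it to $u_i$. A cancellation step lowers $m$. On $X^*$, let $\succ$ be the transitive closure of $\to_{\mathcal{R}}$ together with deletion of a last letter. I would show $\succ$ is well-founded as follows.
\begin{itemize}
\item Deletions can be postponed: if $ux\to u\to_{\mathcal{R}}u'$ then $ux\to_{\mathcal{R}}u'x\to u'$. Hence the $\succ$-descendants of $u$ are prefixes of $\mathcal{R}$-descendants of $u$.
\item Since $\mathcal{R}$ is finite and terminating, K\"onig's lemma makes the set of $\mathcal{R}$-descendants finite, so each word has finitely many $\succ$-descendants.
\item A $\succ$-cycle would give $u\to_{\mathcal{R}}^*us$ with $s$ nonempty. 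Then $u\to^*_{\mathcal{R}} us\to^*_{\mathcal{R}} uss\to^*_{\mathcal{R}}\cdots$, contradicting termination.
\end{itemize}
Then order words first by $m$ and then lexicographically by $(u_0,u_1,\dots,u_m)$ under $\succ$. Every step decreases this order: a commutation step decreases $u_{i-1}$ and changes only later coordinates. So $\to_{\mathcal{R}'}$ terminates.

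\emph{Confluence.} By Newman's lemma it suffices to resolve critical pairs; $z^{\pm1}$ never appears in $\mathcal{R}$.
\begin{itemize}
\item An overlap of a rule $(\ell'x,r)\in\mathcal{R}$ with $xz$ gives $rz$ and $\ell'zx$. Both reduce to $zr$: the first by commutation, the second via $z\ell'x\to zr$.
\item Overlaps $xzz^{-1}$ give $zxz^{-1}\to zz^{-1}x\to x$ and $x$, so they resolve.
\item Overlaps $zz^{-1}z$ are trivial, and overlaps inside $\mathcal{R}$ resolve by completeness of $\mathcal{R}$.
\end{itemize}
Irreducible words are $z^nw$ with $n\in\Z$ and $w$ $\mathcal{R}$-irreducible. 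These map bijectively onto $\Z\times G$, so the monoid presented is $\Z\times G$.

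\emph{Compatibility.} Put $k=|k_\alpha|$ and take the alphabet $X_\alpha\cup\{a_\alpha^{\pm1}\}$ for $H_\alpha=k_\alpha\Z\times G_\alpha$, with $a_\alpha\mapsto z^{k_\alpha}$. Let $\mathcal{R}'_\alpha$ consist of the following rules.
\begin{enumerate}
\item $t_\alpha z^{k}\to a_\alpha^{\pm1}t_\alpha$, with the sign chosen to match $z^{k_\alpha}$.
\item $t_\alpha z^{-1}\to a_\alpha^{\mp1}t_\alpha z^{k-1}$.
\item For each rule $(t_\alpha u,vt_\alpha w)\in\mathcal{R}_\alpha$ and each $0\le n<k$, the rule $t_\alpha z^nu\to vt_\alpha z^nw$.
\end{enumerate}
These are finitely many rules of the required form, and they preserve values because $z$ is central. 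For condition (3), note that in a $t_\alpha$-irreducible word $t_\alpha u'$ the part $u'$ is also $\mathcal{R}'$-irreducible, hence of the form $z^nu$. Irreducibility then forces $0\le n<k$, $u$ $\mathcal{R}$-irreducible, and $t_\alpha u$ irreducible for $\mathcal{R}\cup\mathcal{R}_\alpha$. Two such words $z^nu$ and $z^{n'}u'$ lie in the same left $H_\alpha$-coset iff $n\equiv n'\pmod k$ and $G_\alpha\pi(u)=G_\alpha\pi(u')$. This gives $n=n'$, and then $u=u'$ by condition (3) for $\mathcal{R}_\alpha$.

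The main obstacle is termination of $\mathcal{R}'\cup\mathcal{R}'_\alpha$. Rules of type (2) create $z$'s to the right of $t_\alpha$, and rules of type (3) interact with $\mathcal{R}_\alpha$-steps. My first attempt is to combine three measures, in this priority:
\begin{itemize}
\item the $\succ$-type order on the part of the word to the right of $t_\alpha$, where I expect to use termination of $\mathcal{R}\cup\mathcal{R}_\alpha$ exactly as above;
\item the number of $z^{-1}$ letters to the right of $t_\alpha$, which type (2) strictly decreases;
\item the exponent sum of $z$ there, which type (1) strictly decreases once the second measure has stabilised.
\end{itemize}
Making this combination strictly decrease under every step is where the real checking will lie.
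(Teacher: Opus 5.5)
Your rules $\mathcal{R}'_\alpha$ are the paper's rules. The paper also lists the redundant rules $tz^{-i}\to y^{-1}tz^{k-i}$ for $1\le i\le k$, and it tacitly needs the cancellation rules $zz^{-1},z^{-1}z\to\epsilon$ that you add. Your verification of completeness of the $\Z\times G$ system and of condition (3) is correct; the paper simply asserts all of this.

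\textbf{The gap: termination of $\mathcal{R}'\cup\mathcal{R}'_\alpha$.} This is condition (1) of compatibility. You leave it open, and the measure you sketch fails with the priorities you give. Your $\succ$-type order from the base case must compare first the number $m$ of $z^{\pm1}$-letters in the block after $t_\alpha$. You cannot drop that component, because lexicographic comparison of tuples of unbounded length is not well-founded. But a type (2) step $t_\alpha z^{-1}\to a_\alpha^{\mp1}t_\alpha z^{k-1}$ changes $m$ by $k-2$. For example, with $k=3$ it sends $t_\alpha z^{-1}$ (where $m=1$) to $a_\alpha^{\mp1}t_\alpha z^{2}$ (where $m=2$). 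So for $|k_\alpha|\ge 3$ your top-priority measure strictly increases on such steps, and the drop in your second measure does not help.

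\textbf{The repair: reverse the priorities.} Write the block following an occurrence of $t_\alpha$ as $u'_0z^{e_1}u'_1\cdots z^{e_m}u'_m$ with $u'_i\in X^*$.
\begin{enumerate}
\item First compare $(N_-,N_+)$ lexicographically, where $N_-$ and $N_+$ are the numbers of letters $z^{-1}$ and $z$ in the block. Rule (1), rule (2) and the cancellations strictly lower this pair. Commutations, $\mathcal{R}$-steps and type (3) steps leave it, and hence $m$, unchanged.
\item Then compare $(u'_0,\dots,u'_m)$ lexicographically, with each coordinate compared by $\succ_\alpha$. Here $\succ_\alpha$ is the transitive closure of $\mathcal{R}$-steps, deletion of a last letter, and $u_0y\mapsto w_0y$ for $(t_\alpha u_0,vt_\alpha w_0)\in\mathcal{R}_\alpha$.
\end{enumerate}

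\textbf{Why $\succ_\alpha$ is well-founded.} Your three bullets go through verbatim, now using that $\mathcal{R}\cup\mathcal{R}_\alpha$ is finite and terminating on the words $t_\alpha u$. A cycle would give $t_\alpha u\to^{+}Vt_\alpha us$ with $V\in X_\alpha^*$, which can be iterated forever.

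\textbf{Why the second component decreases.} A type (3) step fires only when $u'_0=\cdots=u'_{n-1}=\epsilon$, and it then performs an $\mathcal{R}_\alpha$-step on $u'_n$. A commutation deletes the last letter of some $u'_{i-1}$. Both therefore strictly lower this order.

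\textbf{Arbitrary words.} Termination is required for words with several occurrences of $t_\alpha$ and with letters of $X_\alpha\cup\{a_\alpha^{\pm1}\}$. These cause no trouble:
\begin{itemize}
\item $t_\alpha$ is never created or destroyed, and it occurs in left-hand sides only as the first letter.
\item Letters of $X_\alpha\cup\{a_\alpha^{\pm1}\}$ occur in no left-hand side.
\end{itemize}
So the maximal $(X\cup\{z^{\pm1}\})$-blocks rewrite independently. An infinite derivation would project to an infinite derivation of a single block, which is excluded either by termination of $\mathcal{R}'$ or by the measure above.
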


\begin{proof}
The fact that the given finite rewriting system for $\Z\times G$ is complete is clear. It suffices to show the last statement for the case of a single subgroup $H$ and a single integer $k> 0$. Let $t$ be our new letter and let $\{y^{\pm1}\}\cup X_H$ be the generating set for $k\Z\times H$ with $\pi_{k\Z\times H}(y^{\pm1}) = z^{\pm k}$ and $\pi_{k\Z\times H}(h) = \pi_H(h)$ for all $h\in X_H$. We let
\begin{align*}
\mathcal{R}_{k\Z\times H} = &\{(tz^{k}, yt)\}\cup \{(tz^i, y^{-1}tz^{i+k}) \mid -k\leqslant i<0\}\\
						&\cup \{(tz^iw, utz^iv)\mid 0\leqslant i<k, \forall\, (t_Hw, ut_Hv)\in \mathcal{R}_H\}
\end{align*}
The rewriting system given by adding $X_H\cup\{y^{\pm1}\}$ and $\mathcal{R}_{k\Z\times H}$ satisfies the required properties.
\end{proof}

\subsection{Graphs of groups}

We refer the reader to Serre's book \cite{Serre} for the necessary details on graphs and graphs of groups. A \emph{graph} $\Gamma$ is formally a tuple $(V(\Gamma), E(\Gamma), o, \bar{ }\,)$ consisting of
\begin{enumerate}
\item The \emph{vertex set} $V = V(\Gamma)$.
\item The \emph{edge set} $E = E(\Gamma)$.
\item The \emph{origin map} $o\colon E\to V$.
\item The \emph{reverse map} $\bar{ }\,\colon E\to E$ satisfying $\overline{e}\neq e$ and $\overline{\overline{e}} = e$ for all $e\in E$.
\end{enumerate}
A \emph{spanning tree} for $\Gamma$ is a connected subgraph $T\subset \Gamma$ which contains every vertex of $\Gamma$, but which does not support any reduced cycles. A spanning tree exists for any connected graph.

A \emph{graph of groups} $\mathcal{G}$ is a tuple $(\Gamma, \{G_v\}_{v\in V}, \{G_e\}_{e\in E}, \{\partial_e\}_{e\in E})$ consisting of
\begin{enumerate}
\item A graph $\Gamma$ called the \emph{underlying graph}.
\item A group $G_v$ associated to each vertex $v\in V$ called the \emph{vertex groups}.
\item A group $G_e$ associated to each edge $e\in E$ called the \emph{edge groups}. We assume that $G_e = G_{\overline{e}}$ for each $e\in E$.
\item A monomorphism $\partial_e\colon G_e\to G_{o(e)}$.
\end{enumerate}
If $T\subset \Gamma$ is a spanning tree and, for each $v\in V$, $\langle S_v \mid R_v\rangle$ is a presentation for the vertex group $G_v$, then it is well-known that the fundamental group of $\mathcal{G}$ has presentation:
\begin{align*}
\pi_1(\mathcal{G}) = \langle \{S_v\}_{v\in V}\cup \{t_e\}_{e\in E} \mid & \{R_v\}_{v\in V}\cup \{t_e^{-1} = t_{\overline{e}}\}_{e\in E} \cup \{t_e = 1\}_{e\in E(T)} \\
										& \cup\{t_{e}^{-1}\partial_e(g)t_e = \partial_{\overline{e}}(g)\}_{e\in E, g\in G_e}\rangle. 
\end{align*}
It is also a well-known fact that different choices of spanning trees in $\Gamma$ lead to isomorphic groups.

Often one works instead with the following group:
\begin{align*}
F(\mathcal{G}, \Gamma) = \langle \{S_v\}_{v\in V}\cup \{t_e\}_{e\in E} \mid & \{R_v\}_{v\in V}\cup \{t_e^{-1} = t_{\overline{e}}\}_{e\in E}\\
										& \cup\{t_{e}^{-1}\partial_e(g)t_e = \partial_{\overline{e}}(g)\}_{e\in E, g\in G_e}\rangle. 
\end{align*}
There is a natural surjection $F(\mathcal{G}, \Gamma)\to \pi_1(\mathcal{G})$ given by quotienting out by the normal closure of $\{t_e\}_{e\in E(T)}$. We have:

\begin{lemma}
\label{lem:free_prod}
If $k = |V(\Gamma)| - 1$, we have
\[
F(\mathcal{G}, \Gamma) \cong \pi_1(\mathcal{G})*F_k.
\]
\end{lemma}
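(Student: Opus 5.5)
We prove \cref{lem:free_prod} by comparing the two presentations directly. Fix a spanning tree $T\subset\Gamma$. The underlying graph is connected, so $T$ has exactly $|V(\Gamma)|-1 = k$ geometric edges, i.e.\ $k$ pairs $\{e,\overline{e}\}$. Choose an orientation $E^+(T)$ of $T$, selecting one edge from each pair. The presentation of $\pi_1(\mathcal{G})$ is the presentation of $F(\mathcal{G},\Gamma)$ with the extra relators $t_e=1$ for $e\in E(T)$ added. Our plan is to change the generating set of $F(\mathcal{G},\Gamma)$ by Tietze transformations so that the letters $t_e$ with $e\in E^+(T)$ become free generators of a free factor, and the remaining generators and relators give exactly a presentation of $\pi_1(\mathcal{G})$.

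Concretely, root $T$ at a vertex $v_0$. For each vertex $v$ let $p_v$ be the word $t_{e_1}t_{e_2}\cdots t_{e_m}$ along the unique reduced path $e_1,\dots,e_m$ in $T$ from $v_0$ to $v$. For $s\in S_v$ introduce a new generator $s' = p_v s p_v^{-1}$, and for each edge $e\in E$ a new generator $t_e' = p_{o(e)}\, t_e\, p_{t(e)}^{-1}$, where $t(e)=o(\overline e)$. Then eliminate the old $S_v$ and old $t_e$ for $e\notin E^+(T)$ via $s = p_v^{-1}s'p_v$ and $t_e = p_{o(e)}^{-1}t_e'p_{t(e)}$; these substitutions are legitimate because $p_v$ involves only the tree letters $t_e$, $e\in E^+(T)$, which we keep. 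For $e\in E(T)$ we have $t_e'=1$ identically, since $p_{t(e)} = p_{o(e)}t_e$ (or the reverse, with $t_{\overline e}=t_e^{-1}$), so these new letters are trivial words.

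Next we rewrite the relators in the new generators. Each $R_v$ becomes $p_v^{-1}R_v' p_v$, which is equivalent to $R_v'$ in the $s'$ letters. The relations $t_e^{-1}=t_{\overline e}$ become $t_e'^{-1}=t_{\overline e}'$, which are trivial when $e\in E(T)$. The edge relations $t_e^{-1}\partial_e(g)t_e=\partial_{\overline e}(g)$ become, after conjugating by $p_{t(e)}$, the relations $t_e'^{-1}\partial_e(g)'t_e'=\partial_{\overline e}(g)'$, with $t_e'$ replaced by $1$ for tree edges. The key check is that conjugation by the path words cancels telescopically: $p_{t(e)} t_e^{-1} p_{o(e)}^{-1} \cdot p_{o(e)}\partial_e(g)p_{o(e)}^{-1}\cdots$. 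This bookkeeping is the only step requiring care, especially with orientations and the convention $t_{\overline e}=t_e^{-1}$.

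After these moves the presentation has generators $\{S_v'\}\cup\{t_e'\}_{e\notin E(T)}\cup\{t_e\}_{e\in E^+(T)}$. None of the relators involves the $k$ letters $t_e$, $e\in E^+(T)$, and the remaining relators are exactly the relators of the given presentation of $\pi_1(\mathcal{G})$. The group is therefore $\pi_1(\mathcal{G})*F_k$. As a sanity check, this isomorphism is compatible with the natural surjection $F(\mathcal{G},\Gamma)\to\pi_1(\mathcal{G})$, which kills exactly the free factor.
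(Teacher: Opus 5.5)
Your argument is correct, but it takes a different route from the paper. The paper does no Tietze bookkeeping. It enlarges $\mathcal{G}$ to a graph of groups $\mathcal{H}$ by adding a cone vertex with trivial vertex group, joined by an edge (necessarily with trivial edge group) to every vertex of $\Gamma$. It then computes $\pi_1(\mathcal{H})$ with two different spanning trees:
\begin{itemize}
\item Taking $T$ together with one cone edge gives $\pi_1(\mathcal{G})*F_{|V(\Gamma)|-1}$, since the remaining cone edges contribute free letters.
\item Taking the star of all cone edges leaves every $t_e$, $e\in E(\Gamma)$, unkilled, and gives exactly the presentation of $F(\mathcal{G},\Gamma)$.
\end{itemize}
The lemma then follows from the standard fact that $\pi_1(\mathcal{H})$ does not depend on the spanning tree.

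Your substitutions $s'=p_vsp_v^{-1}$ and $t'_e=p_{o(e)}t_ep_{t(e)}^{-1}$ are precisely the computation that proves that standard fact, specialised to this case. So your proof is self-contained and yields an explicit isomorphism. As you note, it is one under which the natural quotient $F(\mathcal{G},\Gamma)\to\pi_1(\mathcal{G})$ becomes the projection killing $F_k$. The paper's proof is shorter and more conceptual, but it outsources that calculation.

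One point to tidy: $p_v$ involves only letters $t_e$ with $e\in E^+(T)$ only if you orient $T$ away from the root $v_0$. For an arbitrary orientation, $p_v$ may contain a letter $t_{\overline{e}}$ that you are eliminating, which makes its own substitution circular. Either fix the orientation away from $v_0$, or write $t_e^{-1}$ in place of $t_{\overline{e}}$ inside $p_v$, which is legitimate by the relation $t_{\overline{e}}=t_e^{-1}$.
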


\begin{proof}
Consider the graph of groups $\mathcal{H}$ obtained from $\mathcal{G}$ by adding a vertex $v$ with trivial vertex group and, for each vertex $w\in V(\Gamma)$, adding an edge connecting $v$ with $w$. By extending the spanning tree of $\Gamma$ to a spanning tree of the underlying graph of $\mathcal{H}$ and by observing the presentations, we see that $\pi_1(\mathcal{H})\cong \pi_1(\mathcal{G})*F$ where $F$ is a free group with generating set of cardinality equal to $|V(\Gamma)| - 1$. By instead choosing the spanning tree consisting of all the edges not in $\Gamma$ and by observing the presentations, we see that $\pi_1(\mathcal{H})\cong F(\mathcal{G}, \Gamma)$. This completes the proof.
\end{proof}

\begin{theorem}[Combining finite complete rewriting systems]
\label{thm:combination}
If $\mathcal{G}$ is a finite graph of groups in which each vertex group $G_v$ admits a finite complete rewriting system left compatible with the subgroups $\{\partial_e(G_e)\}_{e\in E, o(e) = v}$, then $F(\mathcal{G}, \Gamma)$ admits a finite complete rewriting system.
\end{theorem}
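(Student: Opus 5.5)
The plan is to build the rewriting system for $F(\mathcal{G},\Gamma)$ directly from the compatible systems of the vertex groups, using the edge letters $t_e$ as the new letters $t_\alpha$ of the compatibility definition.

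\textbf{The system.} For each vertex $v$ let $(X_v,\mathcal{R}_v)$ be the given system for $G_v$. For each edge $e$ with $o(e)=v$, compatibility with $H_e=\partial_e(G_e)$ supplies a finite set $X_{e}$, an epimorphism $\pi_e\colon X_e^*\to H_e$, and rules $\mathcal{R}_e$ of the form $(t_e u, v' t_e w)$ with $u,w\in X_v^*$ and $v'\in X_e^*$. In $F(\mathcal{G},\Gamma)$ we have $t_e^{-1}\partial_e(g)t_e=\partial_{\overline e}(g)$, so conjugation by $t_e$ carries $H_e\le G_{o(e)}$ onto $H_{\overline e}\le G_{o(\overline e)}$. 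I would therefore replace each letter $x\in X_e$ by a fixed word $\phi_e(x)$ over $X_{o(\overline e)}$ representing $\partial_{\overline e}\partial_e^{-1}(\pi_e(x))$. The orientation needs care: a prefix $v'\in X_e^*$ sits to the \emph{left} of $t_e$ and represents an element of $H_{o(e)}$, so it should be pushed through to the right-hand group. I would take the generators to be $\bigcup_v X_v\cup\{t_e\}_{e\in E}$, with $t_{\overline e}$ playing the role of $t_e^{-1}$, and the rules to be:
\begin{itemize}
\item[(a)] $\bigcup_v\mathcal{R}_v$;
\item[(b)] $t_e t_{\overline e}\to\epsilon$;
\item[(c)] for each $(t_e u, v' t_e w)\in\mathcal{R}_e$, a rule $t_{\overline e}\,u\to\phi(v')\,t_{\overline e}\,w$, where the letter reading is arranged so that the left side equals the right side in $F(\mathcal{G},\Gamma)$.
\end{itemize}
Rules (c) push as much of an element of the incoming vertex group as possible leftward across the edge letter, leaving a canonical coset representative after it. Each rule relates equal elements, and the relations of the presentation are consequences of the rules, so the system presents $F(\mathcal{G},\Gamma)$ as a monoid.

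\textbf{Termination.} This is the step I expect to be the main obstacle. Locally, each block $t_{\overline e}\,X_v^*$ terminates by compatibility condition (1). Globally, however, rules (c) deposit new letters $\phi(v')$ into the preceding vertex block. That can trigger further $\mathcal{R}_{v'}$ rewrites there and further type (c) rewrites with the edge letter further left.

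I would argue by induction on the number of edge letters, reading from the right. The rightmost $t$-block can only be rewritten finitely often: its tail only changes by rules from $\mathcal{R}\cup\mathcal{R}_e$, which are terminating, and nothing enters it from the right. So after finitely many steps the rightmost suffix stabilises, and only finitely many letters are ever sent leftward. By induction, the remaining prefix then terminates.

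To make this rigorous I would use a recursive path ordering or wreath-product-style ordering in which the rightmost blocks are most significant. An alternative is to check that König's lemma applies, since each block contributes only finitely branching, finitely long sequences. If needed, I would first restrict to rules in which $t_e t_{\overline e}$ cancellation does not interfere.

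\textbf{Confluence.} I would check critical pairs and rely on the normal form theorem for $F(\mathcal{G},\Gamma)$, i.e.\ the reduced-word theorem for graphs of groups in Serre's form. Once termination holds, it suffices to show that distinct irreducible words represent distinct elements. An irreducible word has the shape $g_0\, t_{e_1} c_1\, t_{e_2} c_2\cdots$ where:
\begin{itemize}
\item[(i)] each $g_0$ and each $c_i$ is $\mathcal{R}$-irreducible;
\item[(ii)] each $t_{e_i}c_i$ is irreducible for the compatibility system, so by Remark~\ref{rem:coset_reps} $c_i$ is a canonical coset representative;
\item[(iii)] there is no subword $t_et_{\overline e}$ with an empty word between the two letters.
\end{itemize}
Words of this shape are exactly the standard normal forms of Serre: a coset transversal in each vertex group together with reducedness. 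By compatibility condition (3), equality of cosets matches equality of representatives, so distinct irreducible words give distinct elements. This yields completeness.
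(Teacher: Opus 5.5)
Your proposal is essentially the paper's proof. It uses the same rules ($t_{\overline e}u\to\phi(v')t_{\overline e}w$ together with $t_et_{\overline e}\to\epsilon$), proves termination with the same ordering (number of edge letters first, then block complexities with the rightmost block most significant, which is exactly your induction from the right), and gets confluence the same way, from \cref{rem:coset_reps} and the normal form theorem. The one step to tighten is that the paper first collapses all vertices to a single vertex group $*_vG_v$ via \cref{lem:join}: a syllable $c_i$ between edge letters in $F(\mathcal{G},\Gamma)$ may mix letters from different $X_v$, so your item (ii) and the appeal to Serre's normal form are only literally correct after this reduction, with coset representatives taken in the free product.
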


\begin{proof}
If we identify all vertices of $\Gamma$ we obtain a new graph of groups $\mathcal{G}'$ with a single vertex group $G_u$ so that $G_u = *_{v\in V}G_v$. We have $\pi_1(\mathcal{G}') \cong F(\mathcal{G}, \Gamma)$. Moreover, $\mathcal{G}'$ satisfies all the assumptions of the theorem by \cref{lem:join}. Thus, we may assume without loss that $\Gamma$ consists of a single vertex, $u$, for simplicity.

Let $(X_u, \mathcal{R}_u)$ be the finite complete rewriting system for $G_u$ that is compatile with $\{\partial_e(G_e)\}_{e\in E}$ which exists by assumption. For each $e\in E$, denote by $\pi_e\colon X_{e}^*\to G_e$ the surjection, by $\mathcal{R}'_e$ the rules and by $t_e'$ the new letter given by the definition of compatible rewriting systems. For each $x\in X_e$, choose some word $w_x$ over $X_{u}$ so that 
\[
\pi(w_x) = \partial_{\overline{e}}(\pi_e(x)) = \pi(t_{e}^{-1}xt_e).
\]
For each $e\in E(\Gamma)$, consider the rules:
\[
\mathcal{R}_e = \{(t_{\overline{e}}u, w_xt_{\overline{e}}v) \mid \,\forall\, (t_{e}'u, xt_{e}'v)\in \mathcal{R}_e'\}.
\]
Consider the sets
\begin{align*}
X &= X_u \cup \{t_e, t_e^{-1}\}_{e\in E}\\
\mathcal{R} &= \mathcal{R}_u\cup \bigcup_{e\in E}\mathcal{R}_e\cup \{(t_et_{\overline{e}}, \epsilon), (t_{\overline{e}}t_e, \epsilon)\}_{e\in E}\cup \{(t_{e}^{-1}, t_{\overline{e}})\}_{e\in E}
\end{align*}
and the rewriting system $(X, \mathcal{R})$. It is clear that this is a finite monoid presentation of $F(\mathcal{G}, \Gamma)$. We just need to show that it is complete.

If $\mathcal{Q}$ is a set of rewriting rules over an alphabet $Y$, define $c_{\mathcal{Q}}(x)$ to be the maximal integer $n$ so that $x = y_0\to_{\mathcal{Q}}y_1\to_{\mathcal{Q}}\ldots\to_{\mathcal{Q}}y_n$. This is well-defined whenever $(Y, \mathcal{Q})$ is finite and terminating (this follows from an application of K\"onig's Lemma). 

Since $(X_u\cup \{t_{e}\}, \mathcal{R}_u\cup\mathcal{R}_{\overline{e}})$ is a finite terminating rewriting system by assumption, we see that $c_{\mathcal{R}_u\cup\mathcal{R}_{\overline{e}}}(x)$ is well-defined for any word $x\in (X_u\cup\{t_e\})^*$. If $x\in X^*$ is any word, there is a unique factorisation $x = x_0t_{e_1}x_1\ldots t_{e_k}x_k$ so that for each $i\geqslant 0$ we have $x_i\in X_u^*$. We may associate to $x$ a tuple of integers $C(x) = (n_0, \ldots, n_k)$ where $n_i = c_{\mathcal{R}_u\cup\mathcal{R}_{\overline{e}}}(t_{e_i}x_i)$ for each $i\geqslant 1$ and $n_0 = c_{\mathcal{R}_u}(x_0)$. We write $(n_0, \ldots, n_k)>(m_0, \ldots, m_q)$ if $k>q$ or if $k = q$ and $n_i>m_i$ for some $i$ and $n_j = m_j$ for all $j>i$. We see that for each $x, y\in X^*$ so that $x\to_{\mathcal{R}}y$, we have $C(x)>C(y)$. Thus, $(X, \mathcal{R})$ is terminating.

Now let $x, y\in X^*$ be two irreducibles. We wish to show that $\pi(x) = \pi(y)$ if and only if $x = y$. Let $x = x_0t_{e_1}x_1\ldots t_{e_n}x_n$ be the unique factorisation as above. We have:
\begin{enumerate}
\item For each $i\geqslant 0$, we have that $t_{e_i}x_i$ is an irreducible for the rewriting system $(X_u\cup\{t_{e_i}\}, \mathcal{R}_u\cup\mathcal{R}_{\overline{e}_i})$.
\item For each $i\geqslant 1$, we have $\overline{e}_i =  e_{i+1}$ only if $x_{i}\neq \epsilon$.
\end{enumerate}
Similarly, there is a unique factorisation $y = y_0t_{f_1}y_1\ldots t_{f_m}y_m$ with the same properties. Now by \cref{rem:coset_reps} and by the normal form theorem for graphs of groups \cite{Higgins1976}, the above implies that $x = y$ as claimed.
\end{proof}

\begin{remark}
It is perhaps worth remarking here that \cref{thm:combination} cannot be improved too much since there are groups with undecidable word problem that can be constructed from free groups by finitely many HNN-extensions over finitely generated groups \cite{CM99}.
\end{remark}

By combining \cref{lem:free_prod} with \cref{thm:combination}, we obtain the following corollary.

\begin{corollary}
\label{cor:combination}
If $\mathcal{G}$ is a finite graph of groups in which each vertex group $G_v$ admits a finite complete rewriting system (left) compatible with the subgroups $\{\partial_e(G_e)\}_{e\in E, o(e) = v}$, then the group $\pi_1(\mathcal{G})*F$ admits a finite complete rewriting system where $F$ is a free group with $\rk(F) = |V| -1$.
\end{corollary}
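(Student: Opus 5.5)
The plan is to obtain \cref{cor:combination} by chaining the two results stated just before it. By hypothesis, every vertex group $G_v$ carries a finite complete rewriting system that is left compatible with the family $\{\partial_e(G_e)\}_{e\in E,\, o(e)=v}$. These are exactly the hypotheses of \cref{thm:combination}, so that theorem applies verbatim to $\mathcal{G}$. It tells us that the group $F(\mathcal{G},\Gamma)$, with the presentation given above (generators $\{S_v\}\cup\{t_e\}$, and no relations killing the spanning-tree edges), admits a finite complete rewriting system.

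Next we identify this group. By \cref{lem:free_prod}, taking $k=|V(\Gamma)|-1$, there is an isomorphism $F(\mathcal{G},\Gamma)\cong \pi_1(\mathcal{G})*F_k$. Admitting a finite complete rewriting system means that some finite complete monoid presentation defines a monoid isomorphic to the group. Composing the presentation map with this group isomorphism therefore transports the property to $\pi_1(\mathcal{G})*F$, where $F$ is free of rank $|V|-1$. This is precisely the claim.

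The argument has essentially no obstacle; the only points needing care are bookkeeping. First, \cref{thm:combination} produces a rewriting system for $F(\mathcal{G},\Gamma)$ and not for $\pi_1(\mathcal{G})$ itself. Second, the identification in \cref{lem:free_prod} is an abstract isomorphism of groups, which is enough because having a finite complete rewriting system is an isomorphism invariant: we are free to choose the generating set. Third, the hypotheses of \cref{thm:combination} are stated for the full family $\{\partial_e(G_e)\}_{e\in E,\,o(e)=v}$, which includes both orientations $e$ and $\overline{e}$ of each edge. The compatibility hypothesis must therefore be read as covering every oriented edge starting at $v$, exactly as in the corollary's statement.

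Finally, if $|V|=1$ then $F$ is trivial, and the corollary simply says that $\pi_1(\mathcal{G})$ itself admits a finite complete rewriting system.
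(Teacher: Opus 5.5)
Your proposal is correct and is exactly the paper's argument: apply \cref{thm:combination} to obtain a finite complete rewriting system for $F(\mathcal{G},\Gamma)$, then transport it along the isomorphism $F(\mathcal{G},\Gamma)\cong \pi_1(\mathcal{G})*F_{|V|-1}$ from \cref{lem:free_prod}. The paper states this in a single line, so your bookkeeping remarks (isomorphism invariance of the property, the $|V|=1$ case) are accurate but not strictly needed.
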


We may slightly improve \cref{cor:combination}.

\begin{proposition}
\label{prop:producting}
If $\mathcal{G}$ is a finite graph of groups in which each vertex group $G_v$ admits a finite complete rewriting system (left) compatible with the subgroups $\{\partial_e(G_e)\}_{e\in E, o(e) = v}$, then the group $F*(*_{i=1}^n\pi_1(\mathcal{G}))$ admits a finite complete rewriting system for all $n\geqslant 1$, where $F$ is a free group with $\rk(F) = |V| - 1$ .
\end{proposition}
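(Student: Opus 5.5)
Build a new finite graph of groups $\mathcal{G}_n$ that still satisfies the hypotheses of \cref{thm:combination}, and whose group $F(\mathcal{G}_n,\Gamma_n)$ is isomorphic to $F*(*_{i=1}^n\pi_1(\mathcal{G}))$. Concretely, let $\Gamma_n$ be the disjoint union of $n$ copies $\Gamma^{(1)},\dots,\Gamma^{(n)}$ of $\Gamma$, carrying the same vertex groups, edge groups and boundary maps. Then identify, for each vertex $v\in V(\Gamma)$, the $n$ copies $v^{(1)},\dots,v^{(n)}$ in a prescribed way so that the vertex count stays $|V|$.

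The cleanest way to implement this is to pass through the one-vertex reduction from the proof of \cref{thm:combination}. First, by \cref{lem:free_prod}, we have $F(\mathcal{G},\Gamma)\cong \pi_1(\mathcal{G})*F$ with $\rk F=|V|-1$. Next, take the disjoint union $\mathcal{G}^{\sqcup n}$, whose underlying graph has $n$ components. The group $F(\mathcal{G}^{\sqcup n},\Gamma^{\sqcup n})$ is simply $*_{i=1}^n F(\mathcal{G},\Gamma)$, which yields $n(|V|-1)$ free generators. That is too many, so the plan is instead to take the graph $\Gamma_n$ with vertex set $V$. Each vertex $v$ carries the group $G_v^{*n}$, a free product of $n$ copies of $G_v$. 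Each edge $e$ of $\Gamma$ gets $n$ parallel edges $e^{(i)}$, with edge group $G_e$ and boundary maps into the $i$-th copy.

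By \cref{lem:join}, $G_v^{*n}$ has a finite complete rewriting system compatible with all the $\partial_{e^{(i)}}(G_e)$. To see this, apply compatibility of the $i$-th factor with its own subgroups and use the "moreover" clause of \cref{lem:join}. The hypotheses of \cref{cor:combination} therefore hold for $\mathcal{G}_n$, and we obtain a finite complete rewriting system for $\pi_1(\mathcal{G}_n)*F$ with $\rk F=|V|-1$.

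It remains to verify the isomorphism $\pi_1(\mathcal{G}_n)\cong *_{i=1}^n\pi_1(\mathcal{G})$. This is the main point to check. Choose a spanning tree $T$ of $\Gamma$. In $\Gamma_n$, take as spanning tree the copy $T^{(1)}$, so that the edges $e^{(i)}$ with $e\in E(T)$ and $i\geqslant 2$ become stable letters. Reading off the presentation, each copy $i$ gives the generators of the $G_v^{(i)}$ together with the stable letters $t_{e^{(i)}}$. The relations are those of $\pi_1(\mathcal{G})$, except that the tree edges of copies $i\geqslant2$ are not killed.

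This shows the naive count is off. In fact $\pi_1(\mathcal{G}_n)\cong \pi_1(\mathcal{G})*(*_{i=2}^n F(\mathcal{G},\Gamma))$, which is $*_{i=1}^n\pi_1(\mathcal{G})*F_{(n-1)(|V|-1)}$ by \cref{lem:free_prod}. The obstacle is therefore to eliminate the extra free factor. The fix I would try is to let copy $i$ live on $T$-connected vertex groups by identifying vertices differently. Take the vertex set of $\Gamma_n$ to be $V$ again, but with vertex group $G_v^{(1)} * \cdots$ arranged as follows. Use the tree $T$ with its vertex groups amalgamated trivially, that is, first replace $\mathcal{G}$ by the graph of groups $\mathcal{G}'$ obtained by collapsing $T$ to a point with vertex group $*_{v}G_v$. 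This does not change the free rank count in \cref{lem:free_prod}, which is where the $|V|-1$ comes from. One then checks the isomorphism directly on presentations, with $n$ copies sharing the same $|V|-1$ tree letters; these shared tree letters are exactly the $F$ factor. So the final step is a careful Tietze-transformation comparison of the presentation produced by \cref{thm:combination} with that of $F*(*_{i}\pi_1(\mathcal{G}))$. I expect this bookkeeping of which stable letters are shared between copies to be the delicate step.
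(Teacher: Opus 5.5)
\textbf{There is a genuine gap: the proposal never produces a graph of groups with the right fundamental group.} Your diagnosis of the parallel-edge construction is correct, and the missing idea is exactly what makes the proposition work. As long as every edge group lies in a single free factor, each of the $n$ copies of a tree relation $\partial_e(g)=\partial_{\bar e}(g)$ needs its own edge. On $|V|$ vertices only $|V|-1$ edges are killed, so $(n-1)(|V|-1)$ surplus stable letters survive.

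Your ``collapse'' step does not help. Giving the collapsed vertex the group $*_vG_v$ is not a collapse of $T$, since collapsing would amalgamate. It is the identification of all vertices from the proof of \cref{thm:combination}. That identification only turns the $|V|-1$ free letters into stable letters of loops, so $\pi_1(\mathcal{G}')\cong F(\mathcal{G},\Gamma)$, and $n$ copies of $\mathcal{G}'$ have the same overcount.

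To make the copies genuinely share the tree letters, you must identify the $n$ copies of $T$ \emph{edges included}, as the paper does:
\begin{itemize}
\item each $e\in E(T)$ appears once in $\Gamma_n$, with edge group $G_e^{*n}$ embedded in $G_{o(e)}^{*n}$ by $*_i\partial_e^{(i)}$;
\item each non-tree edge appears $n$ times, with edge group $G_e$ mapping into the $i$-th factor.
\end{itemize}
Take $T$ as the spanning tree of $\Gamma_n$. The standard presentation of $\pi_1(\mathcal{G}_n)$ is then literally the free product of $n$ copies of that of $\pi_1(\mathcal{G})$, so no delicate Tietze bookkeeping is needed. \Cref{cor:combination} then adds only $F_{|V|-1}$.

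\textbf{This construction changes what has to be checked, and your proposal does not address it.} The vertex groups must now have rewriting systems compatible with the free-product subgroups $*_i\partial_e^{(i)}(G_e)\leqslant G_{o(e)}^{*n}$. The ``moreover'' clause of \cref{lem:join} concerns subgroups of a single factor, so it does not literally give this; the paper also simply cites \cref{lem:join} at this point. The claim does hold, by the following argument.
\begin{itemize}
\item Adjoin a single new letter $t$, and take the union of the $n$ factors' rule sets, renaming each factor's letter to $t$.
\item Termination: measure each maximal $X^*$-segment, together with its preceding $t$ if any, first by its number of free-product syllables. Then use the multiset of the factor-wise termination measures of its syllables. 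Every rule changes one segment and decreases this measure.
\item Coset representatives: an irreducible word $tu$ has either $u$ empty, or $u$ in free-product normal form with first syllable $u_1$ such that $tu_1$ is irreducible in its own factor. In the latter case $u_1$ is nonempty and $\pi(u_1)\notin\partial_e^{(i)}(G_e)$.
\item The normal form theorem for free products shows these words form a complete set of distinct coset representatives for $*_i\partial_e^{(i)}(G_e)$.
\end{itemize}
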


\begin{proof}
Choose a spanning tree $T\subset \Gamma$ and define a new graph of groups $\mathcal{G}_n$ obtained by taking $n$ copies of $\mathcal{G}$ and identifying each copy of $T$ together. The new vertex groups are simply free products of $n$ copies of the old vertex groups. By \cref{lem:join} this new graph of groups satisfies the hypotheses of \cref{thm:combination}. We have that $\pi_1(\mathcal{G}_n) \cong *_{i=1}^n\pi_1(\mathcal{G})$. Now the result holds by \cref{cor:combination}.
\end{proof}

\begin{corollary}
\label{cor:combination2}
If $\mathcal{G}$ is a finite graph of groups in which each vertex group $G_v$ admits a finite complete rewriting system (left) compatible with the subgroups $\{\partial_e(G_e)\}_{e\in E, o(e) = v}$, then the group $\pi_1(\mathcal{G})*F_2$ admits a finite complete rewriting system.
\end{corollary}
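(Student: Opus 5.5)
The idea is to reduce the number of vertices of the underlying graph to at most three and then apply \cref{prop:producting} or \cref{cor:combination}. If $|V(\Gamma)| \leqslant 3$, \cref{cor:combination} gives a finite complete rewriting system for $\pi_1(\mathcal{G})*F_k$ with $k = |V|-1\leqslant 2$. We then add free factors $\Z$ by \cref{lem:join}, using the standard rewriting system $\{(zz^{-1},\epsilon),(z^{-1}z,\epsilon)\}$ for $\Z$, to reach exactly $F_2$.

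For general $\Gamma$, I would collapse $\Gamma$ to a graph with two vertices. Concretely, I choose a partition $V = V_1\sqcup V_2$ into two nonempty sets; if $|V|=1$ we are already done. I form a new graph of groups $\mathcal{G}'$ whose underlying graph $\Gamma'$ is obtained by identifying all vertices of $V_1$ to a vertex $u_1$ and all of $V_2$ to a vertex $u_2$. Its vertex groups are $G_{u_i} = *_{v\in V_i}G_v$, and every edge of $\Gamma$ survives, with the same edge groups and attaching maps. By \cref{lem:join}, each $G_{u_i}$ admits a finite complete rewriting system compatible with the images of the edge groups at $u_i$. So by \cref{thm:combination} and \cref{lem:free_prod} we get a finite complete rewriting system for $F(\mathcal{G}',\Gamma') \cong \pi_1(\mathcal{G}')*\Z$.

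It remains to identify $\pi_1(\mathcal{G}')$, and this is the step I expect to be the main obstacle. Collapsing vertices without collapsing a spanning tree changes the fundamental group by free factors. Choose spanning trees $T_i$ of the subgraphs spanned by $V_i$ (assuming, after choosing the partition well, that these are connected, e.g.\ by splitting off a single leaf of a spanning tree of $\Gamma$), together with one connecting edge. Comparing presentations as in the proof of \cref{lem:free_prod} then gives $\pi_1(\mathcal{G}') \cong \pi_1(\mathcal{G}) * F_{|V|-2}$, since the tree edges inside each $V_i$ become loops that are no longer killed. That is too big, so this naive collapse fails. The correct route is instead \cref{prop:producting} with a cleverer graph. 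I would write $G*F_2$ as $\pi_1$ of a graph of groups with three vertices: $\Gamma$ collapsed to one vertex along a spanning tree $T$ (legitimate for $\pi_1$, since $t_e = 1$ on $T$ amalgamates the tree part into a single vertex group $\pi_1(\mathcal{G}|_T)$), plus two extra vertices with trivial groups. The difficulty is that the vertex group $\pi_1(\mathcal{G}|_T)$ must admit a compatible rewriting system, which is not given by hypothesis.

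Given this obstacle, I would ultimately argue as follows. Apply \cref{prop:producting} with $n$ large, together with \cref{lem:free_prod}, to obtain finite complete systems for $F_{|V|-1}*(*^n G)$. Then exhibit $G*F_2$ as a finite-index subgroup of such a group, or conversely obtain $G*F_2$ inside by a Kurosh-type argument, and use \cref{prop:finite_index}. The main check would be the free-rank arithmetic in the Kurosh decomposition, i.e.\ that the index and ranks can be balanced to land exactly on $F_2$.
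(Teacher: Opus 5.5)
Your final paragraph names the right tools, \cref{prop:producting} followed by \cref{prop:finite_index}, and that is how the paper argues. But it stops where the proof actually begins, and one of your two options cannot work. \cref{prop:finite_index}(1) transfers a finite complete rewriting system only \emph{upward}, from a finite-index subgroup to the ambient group. Whether the property passes down to finite-index subgroups is open, as the paper notes. So realising $G*F_2$ (with $G=\pi_1(\mathcal{G})$) as a finite-index subgroup of $F_{|V|-1}*(*^nG)$ gains nothing. What you need is a specific finite-index subgroup of $G*F_2$ of the shape produced by \cref{prop:producting}. You neither name one nor compute its decomposition, and that computation is the whole content of the corollary. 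Your worry about balancing ranks to ``land exactly'' is also misplaced: \cref{lem:join} lets you enlarge the free factor at will. So you only need the free rank of that subgroup to be at least $|V|-1$.

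Here is the missing step. Put $k=|V|$, write $F_2=\langle a,b\rangle$, and let $\phi\colon G*F_2\to\Z/k\Z$ kill $G$ and $b$ and send $a\mapsto 1$.
\begin{itemize}
\item The kernel $K$ is normal of index $k$ and contains $G$. So the $G$-type Kurosh factors of $K$ are the $k$ conjugates $a^iGa^{-i}$, $0\leqslant i<k$.
\item Since $KF_2=G*F_2$, there is a single $F_2$-type factor $K\cap F_2$. The quotient graph is a star, so there is no extra free part, and $K=(*_{i=0}^{k-1}a^iGa^{-i})*(K\cap F_2)$.
\item $K\cap F_2$ has index $k$ in $F_2$, so it is free of rank $k+1$, with basis $a^k$ and $a^iba^{-i}$ for $0\leqslant i<k$.
\end{itemize}
Now \cref{prop:producting} with $n=k$ gives a finite complete rewriting system for $F_{k-1}*(*^kG)$. \cref{lem:join} with the standard system for $F_2$ upgrades this to $F_{k+1}*(*^kG)\cong K$, and \cref{prop:finite_index}(1) lifts it to $G*F_2$. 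Any $n\geqslant\max\{1,|V|-2\}$ would work in place of $k$; $n=1$ recovers your case $|V|\leqslant 3$. Your treatment of $|V|\leqslant 3$ is correct but subsumed. Your analysis of the two-vertex collapse is right that it gains nothing, since $F(\mathcal{G}',\Gamma')$ has literally the same presentation as $F(\mathcal{G},\Gamma)$.
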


\begin{proof}
By \cref{prop:producting}, $F_{k+1}*(*_{i=1}^k\pi_1(\mathcal{G}))$ admits a finite complete rewriting system where $k = |V|$. Consider any epimorphism $\phi\colon \pi_1(\mathcal{G})*F_2\to \Z/k\Z$ factoring through the projection to $F_2$. Then $\ker(\phi) \cong F_{k+1}*(*_{i=1}^k\pi_1(\mathcal{G}))$. Now by \cref{prop:finite_index}, $\pi_1(\mathcal{G})*F_2$ admits a finite complete rewriting system.
\end{proof}

\section{Finite complete rewriting systems for graphs of free groups}

The aim of this section is to prove the following theorem.

\MainTheoremA*

\subsection{Compatible rewriting systems for free groups}

The key missing ingredient for the proof of \cref{thm:main_free} is the following.

\begin{theorem}
\label{thm:compatible_free}
Let $F = F(X)$ be the free group freely generated by the finite set $X$ and let $\pi\colon (X^{\pm1})^*\to F$ be the natural monoid epimorphism. If $\mathcal{R} = \{(xx^{-1}, \epsilon), (x^{-1}x, \epsilon)\, \forall x\in X\}$, then the rewriting system $(X^{\pm1}, \mathcal{R})$ is a finite complete rewriting system for $F$.

Moreover, if $\{F_{\alpha}\}_{\alpha\in A}$ is a finite collection of finitely generated subgroups, then the rewriting system $(X^{\pm1}, \mathcal{R})$ is (left and right) compatible with $\{F_{\alpha}\}_{\alpha\in A}$.
\end{theorem}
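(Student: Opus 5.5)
The completeness of $(X^{\pm1},\mathcal{R})$ is the classical statement about free reduction. The rules are length reducing, so the system is terminating. The only overlaps are $xx^{-1}x$ and $x^{-1}xx^{-1}$, and both resolve to $x^{\pm1}$, so the system is locally confluent and hence confluent by Newman's lemma. The irreducible words are the freely reduced words, which are in bijection with $F$. The substance of the theorem is compatibility. It suffices to treat one finitely generated subgroup $H=F_\alpha$ at a time, because the conditions in the definition are imposed separately for each $\alpha$. Right compatibility will follow from left compatibility by applying the anti-automorphism $w\mapsto w^{-1}$, which reverses words and inverts letters.

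The plan is to build everything out of the Stallings core graph $\Gamma_H$ of $H$. This is a finite folded $X$-labelled graph with basepoint $b$, whose reduced closed paths at $b$ read exactly the reduced words in $H$. Fix a spanning tree $T\subset\Gamma_H$, chosen to be a breadth-first (geodesic) tree from $b$. For each vertex $v$, let $\rho_v$ be the reduced label of the tree path from $b$ to $v$. The set $X_H$ is the set of oriented non-tree edges, one letter for each edge and one for its reverse. Each such letter maps to $\rho_v x\rho_{v'}^{-1}\in H$, where the edge is $v\xrightarrow{x}v'$; these elements form a free basis of $H$. The set $\mathcal{R}_H$ consists of the rules
\[
(t\rho_v x,\; e\, t\rho_{v'})
\]
for every oriented non-tree edge $e\colon v\xrightarrow{x}v'$ of $\Gamma_H$, where $x\in X^{\pm1}$. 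These rules have the required shape $(t u, v t w)$ and satisfy $\pi(u)=\pi_H(e)\pi(w)$ by construction.

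Next I would characterise the irreducible words $tu$ with $u\in (X^{\pm1})^*$. Such a $u$ is freely reduced. Let $p$ be the maximal prefix of $u$ readable as a path in $\Gamma_H$ from $b$; this path is unique because $\Gamma_H$ is folded. If the path used a non-tree edge, consider the first one, $v\xrightarrow{x}$. The part of the path before it is a reduced tree path, so it reads $\rho_v$, and $u$ would have prefix $\rho_vx$, which is forbidden. Hence $p=\rho_v$ for some vertex $v$, and the remaining word $u=\rho_v u'$ has first letter not labelling any edge out of $v$ in $\Gamma_H$ (or $u'$ is empty). By the covering-space description of $H\backslash$(Cayley tree), namely $\Gamma_H$ with hanging trees attached, these words are in bijection with the right cosets $H\cdot g$. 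The vertex of the covering graph reached by $g$ from $b$ is uniquely encoded as a tree path in the core followed by a reduced path in a hanging tree. This gives condition (3).

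I expect termination of $\mathcal{R}\cup\mathcal{R}_H$ to be the main obstacle, because $|\rho_{v'}|$ may equal $|\rho_v|+1$, so the rules are not length decreasing. My plan is to work segment by segment after each occurrence of $t$; letters of $X_H$ have no rules applying to them. For a segment $tu$ I would use the lexicographic measure
\[
\bigl(|u|-|\text{maximal prefix of }u\text{ of the form }\rho_v|,\ |u|\bigr).
\]
A rule of $\mathcal{R}_H$ consumes the letter $x$ immediately after a tree prefix $\rho_v$ and replaces $\rho_vx$ by a new tree prefix $\rho_{v'}$, so the first coordinate drops by one. A free reduction shortens $u$. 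I still need to check that a free reduction cannot increase the first coordinate. Since $\rho_v$ is reduced, the only cancellation that can touch the prefix is at its junction with the rest of $u$, and it replaces $\rho_v$ by $\rho_{w}$ for the parent vertex $w$, which appears to keep the first coordinate non-increasing. This bookkeeping is the step I would verify most carefully. If it fails, my fallback is to enlarge the left-hand sides to $t\rho_vx$ for all edges, including tree edges, whose right-hand sides drop to a shorter reduced representative, so that each rule application strictly decreases a suitable length.
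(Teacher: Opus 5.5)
Your proposal is correct and is essentially the paper's proof. The paper takes a spanning tree of the full Schreier graph, which must contain every hanging tree, so its non-tree edges are exactly your non-tree edges of the core. Its rules $(t_Hp_ex_e, w_et_Hq_e^{-1})$ coincide with your $(t\rho_vx, e\,t\rho_{v'})$, and condition (3) is proved by the same observation that the path of an irreducible $u$ must stay in the spanning tree.

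The only divergence is termination. The paper uses the number of non-tree edges traversed by the paths of the $X^{\pm1}$-segments, which drops by exactly one under each rule of $\mathcal{R}_H$ and does not increase under free reduction, with length as tie-breaker. Your measure also works, and in fact its first coordinate alone strictly decreases under every rule: a cancellation at the junction removes two letters while shortening the tree prefix by only one. So neither the breadth-first tree nor your fallback is needed.
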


\begin{proof}
The fact that the given rewriting system is finite and complete is clear. Moreover, for the compatibility statement, it suffices to prove it for a single group, say $H= F_{\alpha}$. We will prove left compatibility as right compatibility is similar.

Let $\Gamma$ denote the Schreier coset graph for $H$. Recall that the vertices of $\Gamma$ are left cosets $Hh$, oriented edges are labelled by elements in the generating set $X^{\pm1}$ and two vertices $Hh_1, Hh_2$ are connected by an oriented edge labelled by $x$ if and only if $Hh_1\pi(x) = Hh_2$. Let $T\subset \Gamma$ be a spanning tree for $\Gamma$. Since $H$ is finitely generated, the number of oriented edges $E$ in $\Gamma - T$ is finite (see \cite{St83}). For each oriented edge $e\in E$, let $p_e\in (X^{\pm1})^*$ be the word labelling the unique geodesic path in $T$ connecting $H$ with the origin of $e$, let $x_e$ be the label of $e$ and let $q_e\in X^*$ be the word labelling the unique geodesic path in $T$ connecting the target of $e$ with $H$. By definition we have $\pi(p_ex_eq_e)\in H$. Denote by $X_H = \{w_e \mid e\in E\}$, and define $\pi_H\colon X_H\to H$ by $\pi_H(w_e) = p_ex_eq_e$. Note that $\pi_H$ is surjective. Now consider the following rules
\[
\mathcal{R}_H = \{(t_Hp_ex_e, w_et_Hq_e^{-1}) \mid e\in E\}.
\]
We claim that $\mathcal{R}_H$ is the required set of rewriting rules. 

We begin by showing that $(X^{\pm1}\cup X_H\cup\{t_H\}, \mathcal{R}\cup\mathcal{R}_H)$ is terminating. Let $w$ be any word. We may write $w = w_0u_1w_1\ldots u_kw_k$ where $w_i\in (X^{\pm1})^*$ and $u_i\in (X_H\cup \{t_H\})^*$. Now each word $w_i$ determines a unique path in $\Gamma$ connecting $H$ with another vertex. Denote by $\#_E(w_i)$ the number of times the corresponding path traverses an edge in $E$. Denote by $\#_E(w) = \sum_{i=0}^k\#_E(w_i)$. Then each application of a rule in $\mathcal{R}_H$ strictly decreases $\#_E(w)$ and each application of a rule in $\mathcal{R}$ strictly decreases the length of $w$ and does not increase $\#_E(w)$. Thus, the rewriting system is certainly terminating.

The second required property follows by definition. 

Now we establish the third property. Let $t_{H}u, t_{H}v$ be two irreducibles for $(X^{\pm1}\cup X_{H}\cup \{t_H\}, \mathcal{R}\cup \mathcal{R}_H)$ with $u, v\in (X^{\pm1})^*$. Since $u, v$ are irreducible, they are freely reduced. In particular, the unique paths in $\Gamma$ leading out from $H$ with labels $u, v$ respectively do not have any backtracking. Thus, if either of the paths traverse an edge in $E$, we must have that $p_ex_e$ is a prefix of $u$ or $v$ (where $e$ is the first edge in $E$ traversed), contradicting the assumption that $u, v$ were irreducible. Thus, neither of the two paths traverse any edge in $E$ and so are supported in the spanning tree $T$. By definition of the Schreier graph, $H\pi(u) = H\pi(v)$ if and only if the corresponding paths end at the same vertex. But since the paths are supported in a tree, they are geodesic and they end at the same vertex precisely if they coincide. Thus $H\pi(u) = H\pi(v)$ if and only if $u = v$. This establishes the third required property.
\end{proof}

\begin{remark}
\label{rem:algorithmic}
Via Stallings' graphs and Stallings foldings \cite{St83} (see also \cite{KM02}), the rewriting rules from the proof of \cref{thm:compatible_free} can be given algorithmically.
\end{remark}

\subsection{Proof of \cref{thm:main_free}}

The first item from \cref{thm:main_free} is the combination of \cref{thm:compatible_free} with \cref{cor:combination,cor:combination2}. For the second item, note that if $G$ admits a proper finite index subgroup $H$, then $H$ also splits as a finite graph of finitely generated free groups. Thus, by \cref{thm:compatible_free} and \cref{cor:combination2}, $H*F_2$ admits a finite complete rewriting system and by \cref{prop:finite_index}, $G*\Z$ admits a finite complete rewriting system as claimed.

\subsection{Applications}

In this section we discuss applications of \cref{thm:main_free}.

\MainTheoremB*

\begin{proof}
Let $H\leqslant G$ be a finitely generated subgroup. By work of Feighn--Handel \cite[Proposition 2.3]{Feighn1999}, there is a finitely generated free group $A*B = F\leqslant \F$ so that $\phi = \psi\mid_{A}\colon A\to \psi(A)$ is an isomorphism and $H\cong F*_{\phi}$. In particular, $H$ splits as a HNN-extension of a finitely generated free group and so admits a finite complete rewriting system by \cref{cor:HNN}. The last sentence in the statement of the result then follows by applying a result from \cite{Groves1993} which says that the property of having a finite complete rewriting system is preserved when taking finite index extensions of groups.
\end{proof}

In the introduction we stated some applications of \cref{rewriting_main}. In this subsection we discuss those applications and give proofs for them.

We also have the following effective version of this result.

\begin{theorem}
\label{thm:computingFCRSGeneral} 
There is an algorithm that takes any finite presentation of a group that is virtually a subgroup of a mapping torus of a free group as input and outputs a finite complete presentation defining that group.
\end{theorem}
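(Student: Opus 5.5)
The strategy is a brute-force search that is guaranteed to halt because of \cref{rewriting_main} and \cref{prop:finite_index}. Let $\langle Y \mid S\rangle$ be the input presentation of $G$. By \cref{rewriting_main} and the Groves--Smith finite index result, $G$ admits \emph{some} finite complete rewriting system. The algorithm does not need to know the splitting $H\cong F*_\phi$, the finite index subgroup, or anything about $\psi$. It runs two processes in parallel.

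\textbf{First process.} Enumerate all finite rewriting systems $(X,\mathcal{R})$ together with candidate maps $X\to Y^{\pm1\,*}$ and $Y\to X^*$. For each candidate, the steps are as follows.
\begin{enumerate}
\item Check whether the candidate maps induce mutually inverse isomorphisms between the monoid $\langle X\mid\mathcal{R}\rangle$ and $G$. This reduces to finitely many relations holding in the respective monoids/groups. By Tietze transformations, this is equivalent to the existence of finitely many derivations, which can be enumerated. Since $\langle Y\mid S\rangle$ is only semi-decidable for equalities, we search for certificates of these equalities rather than deciding them.
\item Check whether $\mathcal{R}$ is terminating and confluent.
\end{enumerate}
Confluence of a terminating finite system is decidable via Knuth--Bendix critical pairs. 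Termination is in general undecidable, so instead we search for a certificate of termination. To make this uniform, we restrict the search to systems whose termination is witnessed by a computable well-founded order that can be checked on the finitely many rules, for instance a recursive path ordering or a weight/lexicographic order on tuples like the $C(x)$ used in \cref{thm:combination}.

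\textbf{The main obstacle} is ensuring that some system in the restricted search space actually exists. The systems built in \cref{thm:combination} and \cref{thm:compatible_free} are proved terminating by explicit measures: the edge count $\#_E$ combined with length, and lexicographic tuples over $t$-syllables. After the Groves--Smith finite-index construction, these measures should still be expressible by a checkable order. I would verify this by going through the proofs and showing that each termination argument is an instance of a fixed family of orderings (e.g.\ monotone interpretations into tuples of natural numbers) whose compatibility with a finite rule set is decidable.

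\textbf{Alternative route avoiding this issue.} Be constructive instead. Enumerate finite-index subgroups of $G$, given by finite permutation representations, and search for a presentation of such a subgroup as a free group HNN-extension $A*_C$ with $A,C$ free, given as an explicit isomorphism with a presentation of the form in \cref{cor:HNN}. A subgroup of this kind exists by the proof of \cref{rewriting_main}, using Feighn--Handel. Once found:
\begin{enumerate}
\item \cref{rem:algorithmic} makes the rules of \cref{thm:compatible_free} computable from Stallings graphs.
\item \cref{thm:combination} is explicit.
\item The Groves--Smith extension to the finite-index overgroup is explicit given coset data.
\end{enumerate}
Because each step comes with a proof of completeness, no termination certificate needs to be searched for. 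Transporting the resulting system back to the generators $Y$ via the found isomorphism completes the algorithm.
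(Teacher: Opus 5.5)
Your alternative, constructive route is essentially the paper's proof, and it is correct. The paper does the same: enumerate finite-index subgroups, use Tietze transformations to find one presented as the Feighn--Handel HNN-extension $F*_{\phi}$ of a finitely generated free group, write down the rules from the Stallings graphs (\cref{thm:compatible_free}, \cref{rem:algorithmic}) and from \cref{thm:combination}, and then apply the explicit Groves--Smith construction to pass to $G$.

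Two small points on that route. When you recognise the HNN form, you should check that the prescribed map between the associated subgroups is an isomorphism. This is decidable via Stallings graphs, e.g.\ by checking that both families of words are free bases of the subgroups they generate. Also, the final ``transport back to $Y$'' is unnecessary. The output only has to present a group isomorphic to $G$, and its alphabet cannot in general be taken to be $Y$.

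Your first, brute-force route is not a proof as it stands. It rests on exactly the claim you flag and leave open: that some finite complete system for $G$ has its termination certified by a fixed family of orderings whose compatibility with a finite rule set is decidable.

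This is not a formality. The termination measure $C(x)$ in \cref{thm:combination} is built from the maximal derivation lengths $c_{\mathcal{Q}}$ of auxiliary systems, not from a monotone interpretation that can be checked rule by rule. You would also have to follow the system through the Groves--Smith finite-index construction. And you cannot drop the restriction, since a halting search needs a positive, checkable certificate of termination.

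So the constructive route should be your actual argument. The search could be rescued by enumerating formal proofs of termination in a fixed sound theory, but that is more roundabout than what the paper does.
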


\begin{proof}
Let $G$ be a group given by a finite presentation such that $G$ is virtually a subgroup of a mapping torus of a free group. We can enumerate presentations of finite index subgroups of $G$ and perform Tietze transformations on the presentations until we have a presentation for a finite index subgroup $H$ of $G$ of the form given by the HNN-extension from the statement of \cite[Proposition 2.3]{Feighn1999}. Then, by combining the compatible rewriting systems from \cref{thm:compatible_free} (see \cref{rem:algorithmic}) with the rewriting systems in the proof of \cref{thm:combination}, we may write down a finite complete presentation for $H$. Then we may use the argument in \cite{Groves1993} to transform the finite complete rewriting system for the finite index subgroup into one for $G$. 
\end{proof}

We saw in the introduction in Theorem~\ref{thm:locallyQuasConv} that this can be applied to locally quasi-convex hyperbolic and virtually compact special groups. We provide a proof of that result here. 

\begin{proof}[Proof of Theorem~\ref{thm:locallyQuasConv}] 
Let $G$ be a locally quasi-convex hyperbolic and virtually compact special group. It follows from \cite[Theorem A.2]{fisher2026coherent} that $G$ is virtually free-by-cyclic. Hence by  \cref{rewriting_main} 
the group $G$ and all of its finitely generated subgroups admit complete rewriting systems.  
For the second clause in the statement of Theorem~\ref{thm:locallyQuasConv}, it follows from work of Grunschlag \cite[Proposition 6.1]{Grunschlag1999} that locally quasi-convex hyperbolic groups are effectively coherent. In other words, there is an algorithm that, given a finite presentation for $G$ and a finite subset $S\subset G$ computes a finite presentation for the subgroup $H$ of $G$ generated by $S$. But since $G$ is virtually free-by-cyclic it follows that $H$ also is, and the result follows by applying Theorem~\ref{thm:computingFCRSGeneral}. 
\end{proof}

The main result of this section may also be applied to subgroups of surface-by-cyclic groups. A surface-by-cyclic group is a group $S \rtimes_{\psi} \Z$, where $S$ is the fundamental group of a connected compact surface. 

\begin{theorem}\label{thm:SurfaceByCyclic} 
Any finitely generated subgroup of a surface-by-cyclic group admits a finite complete rewriting system. 
  \end{theorem}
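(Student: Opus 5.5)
The plan is to reduce \cref{thm:SurfaceByCyclic} to \cref{rewriting_main} by splitting according to the surface $\Sigma$, where $S = \pi_1(\Sigma)$ and $G = S\rtimes_\psi \Z$, and $H \leqslant G$ is finitely generated.

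\emph{Case 1: $\Sigma$ has nonempty boundary.} Then $S$ is a finitely generated free group. Hence $G$ is a \{finitely generated free\}-by-cyclic group, i.e.\ a mapping torus $M(\psi)$ of an automorphism of a free group. \cref{rewriting_main} then gives a finite complete rewriting system for every finitely generated subgroup $H$ of $G$.

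\emph{Case 2: $\Sigma$ is closed.} Here we split according to the subgroup.

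\begin{itemize}
\item If $H\cap S$ has infinite index in $S$, then $H\cap S$ is a subgroup of infinite index in a closed surface group, and is therefore free (possibly of infinite rank). If $H \leqslant S$, then $H$ is a finitely generated free group and we are done. Otherwise $H$ maps onto a nontrivial subgroup $n\Z$ of $\Z$, so $H \cong (H\cap S)\rtimes \Z$ is a mapping torus of an automorphism of a free group $\F = H\cap S$ of possibly infinite rank. Since $H$ is finitely generated, \cref{rewriting_main} applies to $H$ as a finitely generated subgroup of itself.
\item If $H\cap S$ has finite index in $S$ and $H\not\leqslant S$, then $H$ has finite index in $G$. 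It then suffices to find a finite complete rewriting system for a finite index subgroup of $G$ and apply \cref{prop:finite_index}(1).
\item If $H \leqslant S$ with finite index, then $H$ is itself a closed surface group. Surface groups admit finite complete rewriting systems by the cited work \cite{LeChenadec1986, hermiller1994rewriting}, and the sphere, torus, projective plane and Klein bottle cases are elementary.
\end{itemize}

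For the finite index case with $H\not\leqslant S$, I would handle closed surfaces of nonpositive Euler characteristic as follows.

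\begin{itemize}
\item If $\chi(\Sigma)<0$, then $G$ is the fundamental group of a closed $3$-manifold fibring over the circle. The closed surface group $S$ is an HNN-extension (equivalently, splits as a graph of groups) of a free group over a free subgroup, obtained by cutting $\Sigma$ along a nonseparating curve. I would like to use this to realise $G$ as a graph of free groups; but to get a clean statement, the better route is to pass to the infinite cyclic cover argument. Concretely, I would write $G$ as virtually a mapping torus of a free group by choosing a different fibration. Since $b_1(G)\geqslant 1$, Thurston norm theory gives a fibred class. Perturbing it (or using that $G$ virtually has $b_1\geqslant 2$) yields a fibration of a finite cover whose fibre has boundary, or more simply a nonfibred epimorphism to $\Z$ with finitely generated free kernel.
\item In fact a simpler route exists: $G$ contains $S' = $ fundamental group of the punctured surface? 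That fails, since the monodromy need not preserve a puncture. So I would instead invoke the standard fact that a closed fibred hyperbolic or $\mathrm{Sol}/\mathbb{H}^2\times\mathbb{R}$-type $3$-manifold group with $\chi(\Sigma)<0$ is virtually free-by-cyclic. This follows because $G$ is virtually special (Agol, Wise, Przytycki--Wise), hence virtually RFRS, so by Kielak a finite index subgroup maps onto $\Z$ with finitely generated kernel of cohomological dimension one, i.e.\ free.
\item If $\chi(\Sigma)=0$, then $G$ is virtually polycyclic, and Groves--Smith \cite{Groves1993} applies directly.
\end{itemize}

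The main obstacle is this last step, namely producing a finite index subgroup of $G$ which is \{finitely generated free\}-by-cyclic. I would first try the virtual RFRS plus Kielak fibring argument. If that is too heavy, the fallback is the Thurston-norm perturbation inside a finite cover with $b_1\geqslant 2$: open cones in the fibred face give fibrations, and one wants one whose fibre has boundary, which requires working with the manifold with boundary obtained by drilling. Once this input is in place, everything else follows from \cref{rewriting_main} and \cref{prop:finite_index}.
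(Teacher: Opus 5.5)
\paragraph{There is a genuine gap, and it is in the step you flag as the main obstacle.} Your boundary case and your infinite index case are fine, and they agree with the paper: both go through \cref{rewriting_main}. The problem is the case where $\Sigma$ is closed with $\chi(\Sigma)<0$ and $H$ has finite index in $G$ with $H\not\leqslant S$.

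The claim that such a $G$ is virtually free-by-cyclic is false.
\begin{itemize}
\item $G$ is the fundamental group of a closed aspherical $3$-manifold, so $\mathrm{cd}(G)=3$.
\item By Serre's theorem, every finite index subgroup of the torsion-free group $G$ also has cohomological dimension $3$.
\item On the other hand, $\F\rtimes\Z$ has cohomological dimension at most $2$ for every free group $\F$, of any rank.
\end{itemize}
Concretely, virtual RFRS plus Kielak does give a finite index subgroup mapping onto $\Z$ with finitely generated kernel. That kernel has cohomological dimension at least $\mathrm{cd}(G)-1=2$; by Stallings' fibration theorem it is a closed surface group, not a free one. The Kielak--Linton argument you are recalling needs cohomological dimension at most two.

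The Thurston-norm fallback fails for the same reason. Every fibre of a fibration of a closed $3$-manifold over the circle is a closed surface. Drilling does not help either: it replaces $G$ by a group that surjects onto $G$, not by a finite index subgroup of $G$.

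\paragraph{The missing idea is that no free-by-cyclic subgroup is needed in the finite index case.} If $H$ has finite index in $G$, then $H\cong T\rtimes\Z$, where $T=H\cap S$ has finite index in $S$ and so is itself a surface group.
\begin{itemize}
\item $T$ admits a finite complete rewriting system by Le Chenadec \cite{LeChenadec1986}, an ingredient you already use in your third bullet.
\item The Groves--Smith argument that finite complete rewriting systems pass to extensions (the proof of part (1) of Theorem 2 in \cite{Groves1993}) then gives one for $T\rtimes\Z$.
\end{itemize}
This is exactly the paper's proof. It treats all compact surfaces uniformly, with no split according to $\partial\Sigma$ or $\chi(\Sigma)$ and no appeal to virtual specialness.

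A smaller point about your second bullet: a finite complete rewriting system for a finite index subgroup of $G$ only transfers to $H$ via \cref{prop:finite_index} if that subgroup lies inside $H$. It is cleaner to note that $H$ is itself surface-by-cyclic and work with $H$ directly.
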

\begin{proof} 
Let $H$ be a finitely generated subgroup of a surface-by-cyclic group $G = S \rtimes_{\psi} \mathbb{Z}$.

If $H$ has finite index in $G$ then $H$ is also a surface-by-cyclic group so $H = T \rtimes_{\phi} \mathbb{Z}$ for some surface group $T$. Since surface groups have finite complete rewriting systems \cite{LeChenadec1986} it follows that $T$ admits a finite complete rewriting systems (since they are also free-by-cyclic, we could also use \cref{rewriting_main}). Then applying \cite[Proof of part (1) of  Theorem (2)]{Groves1993} it follows that $H$ had a finite complete rewriting system. 

If $H$ has infinite index in $G$ then, since surface groups are free-by-cyclic and since every infinite index subgroup of a surface group is a free group, it follows that $H$ is a finitely generated free-by-cyclic group. Hence $H$ has a finite complete rewriting system by Theorem~\ref{rewriting_main}. \end{proof}

\section{Finite complete rewriting systems for three-manifold groups}
\label{sec:3man}

In this section we prove the following, almost answering a question which appears at the end of \cite{Hermiller1999}.

\MainTheoremC*

We will need to understand the closed graph manifold case separately via \cref{thm:combination}. We will then use \cref{rewriting_main} along with general three-manifold results to conclude the proof. See \cite{AFW15} for background.

\subsection{The irreducible but not closed graph manifold case}

We begin by handling the simpler case in which $M^3$ does not contain any closed graph manifold pieces.

\begin{proposition}
\label{prop:irreducible}
Let $M^3$ be a compact irreducible three-manifold with (possibly empty) incompressible boundary. If $M^3$ is not a closed graph manifold that does not virtually fibre, then $\pi_1(M^3)$ admits a finite complete rewriting system.
\end{proposition}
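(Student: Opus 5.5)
The plan is to reduce to the virtually free-by-cyclic (or virtually fibred) case and then apply \cref{rewriting_main} together with \cref{prop:finite_index}. I split into cases according to the boundary and the geometric decomposition of $M^3$.

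\emph{Case 1: $\partial M^3\neq\emptyset$.} Since $M^3$ is compact, irreducible, with non-empty incompressible boundary, results summarised in \cite{AFW15} give that $\pi_1(M^3)$ is virtually special. These rely on Wise's work for hyperbolic manifolds with boundary and on Przytycki--Wise for mixed and graph manifolds with boundary. Moreover, a compact irreducible $3$-manifold with non-empty boundary that is not $D^3$ (excluded, or trivial) and has $\chi(M)\le 0$ has virtually RFRS fundamental group. I would then invoke the virtual fibring criterion of Agol, or more precisely its version for manifolds with boundary: $M^3$ has a finite cover $\widetilde M$ that fibres over the circle with fibre a compact surface with boundary. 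The fundamental group of such a fibre is a finitely generated free group, so $\pi_1(\widetilde M)$ is \{finitely generated free\}-by-cyclic. Hence $\pi_1(M^3)$ is virtually free-by-cyclic and has a finite complete rewriting system by \cref{rewriting_main}. If $\chi(M)>0$ then some boundary component is a sphere, so by irreducibility $M=D^3$ and the group is trivial.

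\emph{Case 2: $M^3$ closed.} I subdivide by geometry. If $M^3$ is hyperbolic, Agol's virtual fibring theorem gives a finite cover that is surface-by-cyclic, so \cref{thm:SurfaceByCyclic} (or \cite{Hermiller1999} together with \cref{prop:finite_index}) applies. If $M^3$ is Seifert fibred or Sol, then Hermiller--Shapiro \cite{Hermiller1999} already give finite complete rewriting systems. Alternatively, such groups are virtually polycyclic or virtually (surface group)$\times\Z$, handled by \cite{Groves1993} and \cref{prop:prod}. If the JSJ decomposition is non-trivial with at least one hyperbolic piece, the manifold is mixed, and Przytycki--Wise show $\pi_1(M^3)$ is virtually special, hence virtually RFRS. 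Since $M$ is aspherical with $\chi=0$, Agol's criterion gives virtual fibring, and the fibre is a closed surface. So $\pi_1$ is virtually surface-by-cyclic, and \cref{thm:SurfaceByCyclic} with \cref{prop:finite_index}(1) applies. The remaining closed case is a graph manifold, which by hypothesis virtually fibres; the same argument then applies.

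The main obstacle I expect is collecting the correct literature statements for virtual fibring in the bounded and mixed cases, in the precise form needed: that the fibre of the cover has finitely generated fundamental group. Once each case yields a finite-index subgroup that is free-by-cyclic or surface-by-cyclic, the conclusion is immediate from \cref{rewriting_main}, \cref{thm:SurfaceByCyclic} and \cref{prop:finite_index}(1).
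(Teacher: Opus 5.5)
\textbf{There is a genuine gap in Case 1.} Your closed case is fine and is essentially the paper's argument: Hermiller--Shapiro for geometric manifolds, virtual fibring from \cite{Ag13,PW18} (or the hypothesis, for graph manifolds) otherwise, then \cref{thm:SurfaceByCyclic} and \cref{prop:finite_index}.

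The problem is the bounded case with $\chi(M^3)<0$. Agol's virtual fibring criterion needs $\chi(M^3)=0$, i.e.\ empty or toroidal boundary, and this is not a technicality.
\begin{itemize}
\item If a $d$-sheeted cover $\widetilde M$ fibred over $S^1$ with compact fibre $F$, then $\chi(\widetilde M)=\chi(F)\chi(S^1)=0$. But $\chi(\widetilde M)=d\,\chi(M^3)$.
\item So whenever $\partial M^3$ has a component of genus at least two, $\chi(M^3)=\tfrac12\chi(\partial M^3)<0$, and \emph{no} finite cover fibres.
\item Group-theoretically, $\pi_1(M^3)$ then cannot be virtually \{finitely generated free\}-by-cyclic, since such groups have Euler characteristic zero.
\end{itemize}
Already $M^3=\Sigma_2\times[0,1]$ (irreducible, with incompressible boundary) contradicts your intermediate claim. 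So the range $\chi(M^3)<0$, which you explicitly included in Case 1, is not covered. This is not a matter of locating the right literature statement: the statement you want is false there.

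\textbf{The missing idea.} For $\chi(M^3)<0$ one only gets a free-by-cyclic structure whose free kernel has infinite rank. Compare $\pi_1(\Sigma_2)$, where the kernel of any epimorphism to $\Z$ is free of infinite rank.

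The paper handles every bounded case uniformly this way:
\begin{itemize}
\item By the sphere theorem, $\pi_2(M^3)=0$, so $M^3$ deformation retracts onto an aspherical $2$-dimensional spine.
\item Hence $\pi_1(M^3)$ has cohomological dimension at most two, and is virtually free-by-cyclic by \cite{KL24b}, with kernel possibly infinitely generated.
\item \cref{rewriting_main} is stated for free groups $\F$ of arbitrary rank precisely to allow this. Applying it to the finite-index free-by-cyclic subgroup and then \cref{prop:finite_index}(1) finishes the proof.
\end{itemize}
Your fibring argument is valid when $\partial M^3$ consists of tori. The cohomological-dimension route, however, needs neither the split by Euler characteristic nor any fibring in the bounded case.
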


\begin{proof}
By the sphere theorem (see \cite[Theorem 1.3.2]{AFW15}), since $M^3$ is irreducible we have that $\pi_2(M^3) = 0$. 

Let us first suppose that $M^3$ has non-empty boundary. Every $n$-manifold with non-empty boundary deformation retracts onto a spine, which is a CW-complex of dimension $n-1$. Since $\pi_2(M^3) = 0$, $M^3$ deformation retracts onto an aspherical 2-complex. In particular, $\pi_1(M^3)$ has cohomological dimension at most two and so is virtually free-by-cyclic by \cite{KL24b}. Now by \cref{rewriting_main} combined with \cref{prop:finite_index}, $\pi_1(M^3)$ admits a finite complete rewriting system.

Now suppose that $M^3$ is closed. The geometric case is handled by Hermiller--Shapiro \cite{Hermiller1999}. Since $M^3$ is not a closed graph manifold that does not virtually fibre, it follows from \cite{WY97,Ag13,PW18} that $M^3$ does virtually fibre. Thus, $G$ has a finite index subgroup $H \cong S\rtimes\Z$ where $S$ is the fundamental group of a closed surface. Hence, by Theorem~\ref{thm:SurfaceByCyclic} combined with \cref{prop:finite_index}, $\pi_1(M^3)$ has a finite complete rewriting system.
\end{proof}

\subsection{The closed graph manifold case}

A useful lemma, which will be our basis for handling the closed graph manifold case, was proven by Wang--Yu \cite[Lemma 0.4]{WY97}.

\begin{lemma}
\label{lem:product}
Every compact graph manifold has a finite sheeted cover with JSJ-decomposition in which each geometric piece $M_i$ is homeomorphic to $S_i\times S^1$, where $S_i$ is a compact orientable surface. 
\end{lemma}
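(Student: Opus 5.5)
This lemma is quoted from Wang--Yu \cite[Lemma 0.4]{WY97}. To prove it, I would show that the obstructions to being a product are all detected by finite quotients, and then pass to a single cover that kills all of them at once.

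First take the JSJ decomposition of the compact graph manifold $M$. Its pieces $M_i$ are Seifert fibred, glued along incompressible tori (and possibly Klein bottles). Passing to the orientation double cover, and then to a further double cover that unfolds any twisted $I$-bundle pieces over the Klein bottle, we may assume $M$ is orientable with toral JSJ decomposition. After a further finite cover we may also assume each Seifert piece has orientable base orbifold.

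Next, each piece $M_i$ is Seifert fibred over a compact $2$-orbifold $B_i$ with cone points. Its fundamental group fits into $1\to \Z\to \pi_1(M_i)\to \pi_1^{orb}(B_i)\to 1$. Two things must be removed in a cover: the cone points, and the obstruction to a section (the Euler number). The pieces of a non-trivial JSJ decomposition have non-empty boundary, and then the Euler number is no obstruction. So it is enough to pass to a cover in which the base orbifold becomes a genuine surface $S_i$ and the fibration becomes a trivial circle bundle $S_i\times S^1$. By Selberg's lemma, or residual finiteness of $2$-orbifold groups, $\pi_1^{orb}(B_i)$ has a torsion-free finite index subgroup. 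Pulling this back gives a cover of $M_i$ that is a circle bundle over a surface with boundary, hence a product.

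The main obstacle is gluing these local covers together compatibly. The local covers must agree on the boundary tori, i.e.\ induce the same finite covers of each JSJ torus. My plan is to use Hempel's residual finiteness argument and choose the covers uniformly. Fix an integer $N$ divisible by all the cone orders. For each piece, take the characteristic cover of $M_i$ that restricts to the ``multiplication by $N$'' cover $T^2\to T^2$ on every boundary torus. Such covers exist by choosing, for each piece, a finite quotient of $\pi_1(M_i)$ that restricts to $(\Z/N)^2$ on each peripheral subgroup. Such a quotient is available for surface-with-boundary times $S^1$ type groups after first passing to a torsion-free base cover, composed with $H_1$-type maps. Because every boundary torus is covered by the same characteristic cover, different copies can be glued along matching tori. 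Taking enough copies of each piece's cover (a common multiple of the degrees) gives a finite cover of $M$. Its JSJ pieces are covers of the $M_i$ dominating the torsion-free base covers, hence products $S_i\times S^1$. If $M$ is a single Seifert piece, the closed case, I would accept either a product or a nontrivial bundle; whether this suffices depends on how the lemma is used.
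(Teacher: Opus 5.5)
The paper does not prove this lemma; it quotes Wang--Yu \cite[Lemma 0.4]{WY97}. Your outline is the standard Hempel-style argument behind that lemma: pass to an orientable cover, find for each Seifert piece a cover with torsion-free base that induces the characteristic cover $NP\leqslant P$ on every elevation of every boundary torus, then glue copies along the JSJ tori. The reductions and the gluing step are fine.

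\textbf{The gap is the local step.} It is the whole content of the lemma, you settle it in one sentence (``composed with $H_1$-type maps''), and as stated it fails. After the torsion-free base cover $S'\times S^1\to M_i$, an elevation of a boundary torus with $P=\langle d,h\rangle$ has group $\langle d^{e},h\rangle$. In the product structure $\pi_1(S')\times\langle h\rangle$ one has $d^{e}=\delta h^{s}$, where $\delta$ is a boundary curve of $S'$ and $s$ is governed by a relative Euler number. An abelian cover of $S'\times S^1$ gives $NP$ only if congruences on these $s$ hold, and they can fail for every $N$.

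For example, take base a disc with cone points of invariants $(2,1),(3,1),(5,1)$. Write $c_j^{\alpha_j}h^{\beta_j}=1$ and $c_1c_2c_3d=1$, and take the torsion-free base cover given by $\Z/2*\Z/3*\Z/5\to\Z/2\times\Z/3\times\Z/5$. Then:
\begin{enumerate}
\item $e=30$, and $S'$ has genus $15$ with a single boundary component.
\item The homomorphism $\rho\colon G\to\Z$ with $\rho(h)=30$, $\rho(c_1)=-15$, $\rho(c_2)=-10$, $\rho(c_3)=-6$, $\rho(d)=31$ shows that $d^{30}h^{-31}$ is a product of commutators in $\pi_1(S'\times S^1)$.
\item Hence $d^{30}h^{-31}$ lies in the subgroup of every abelian cover of $S'\times S^1$, but it is never in $\langle d^N,h^N\rangle$ for $N\geqslant 2$.
\end{enumerate}
So ``$N$ divisible by the cone orders'' is not the operative condition. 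You would have to choose the base cover far more carefully, controlling the number of boundary components and the relative Euler number, or else use non-abelian quotients.

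\textbf{A route that works.} You can invoke Hempel's lemma on Seifert fibred spaces with boundary (finite regular covers inducing characteristic covers on all boundary elevations, for suitable $N$). Alternatively, argue directly. For $N$ large, $\hat G=G/\normal{d_1^N,\dots,d_m^N,h^N}$ is a central $\Z/N$-extension of a cocompact Fuchsian group. It is therefore virtually $\pi_1(\Sigma)\times\Z/N$, hence residually finite. Each $P_l/NP_l$ embeds in $\hat G$, and each exceptional fibre has image of order divisible by its cone order. A finite quotient of $\hat G$ injective on these finitely many finite subgroups gives, in one step, the required normal subgroup with $K\cap P_l=NP_l$ and torsion-free image in the base. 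With that in hand, your gluing argument goes through.

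\textbf{The closed case.} Your caveat there is correct and is not a weakness of your argument. A closed Seifert fibred space with non-zero Euler number has no finite cover that is a product, since the Euler number scales by a non-zero factor. So the lemma has to be read for graph manifolds with non-trivial JSJ decomposition.
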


The idea will now be to apply \cref{thm:combination} to the decomposition provided by \cref{lem:product}. In order to do this we need to concoct finite complete rewriting systems for $\pi_1(S\times S^1)$ for each orientable surface $S$ with boundary, compatible with the fundamental groups of the components of $\partial S\times S^1$.

\begin{proposition}
\label{prop:seifert}
If $S$ is a compact surface with boundary, then $G = \pi_1(S\times S^1)$ admits a finite complete rewriting system compatible with $\{\pi_1(B\times S^1)\}_{B\subset \partial S}$.
\end{proposition}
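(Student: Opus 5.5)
Since $S$ is a compact surface with non-empty boundary, $\pi_1(S)$ is a finitely generated free group $F = F(X)$. Hence $G = \pi_1(S\times S^1)\cong \Z\times F$. For each boundary component $B\subset\partial S$, the subgroup $\pi_1(B\times S^1)$ is the image of $\pi_1(B)\times\pi_1(S^1)$, that is $\Z\times\langle c_B\rangle$. Here $c_B\in F$ is (a conjugate of) the boundary word, and $\langle c_B\rangle$ is a cyclic, so finitely generated, subgroup of $F$. The plan is to combine the two compatibility results already proved: \cref{thm:compatible_free} for the free factor, and \cref{prop:prod} to add the central $\Z$ factor.

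First, I would take the standard free-reduction system $(X^{\pm1},\mathcal{R})$ for $F$. By \cref{thm:compatible_free} it is left compatible with the finite collection $\{\langle c_B\rangle\}_{B\subset\partial S}$, since $\partial S$ has finitely many components. The system is compatible with each conjugate taken individually, so there is no harm in using the specific conjugates determined by a choice of basepoint paths to each boundary component.

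Next, I would apply \cref{prop:prod} with $k_B = 1$ for every $B$. It shows that the system
\[
(X^{\pm1}\cup\{z^{\pm1}\},\ \mathcal{R}\cup\{(xz,zx),(xz^{-1},z^{-1}x)\mid x\in X^{\pm1}\})
\]
is a finite complete rewriting system for $\Z\times F$. It also shows that this system is compatible with $\{\Z\times\langle c_B\rangle\}_B$, which is exactly $\{\pi_1(B\times S^1)\}_B$. The only thing to verify is that the identification $G\cong \Z\times F$ carries each boundary torus subgroup to $\Z\times\langle c_B\rangle$. This holds because the $S^1$ factor generates the central $\Z$, and $\pi_1(B)$ maps to the cyclic subgroup generated by the boundary loop.

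I do not expect a real obstacle, since the two ingredient results do all the work. The only step needing care is the proof of \cref{prop:prod} for $k=1$, where the extra rules $(tz^i,\dots)$ with $-k\le i<0$ become $(tz^{-1},y^{-1}t)$. I would check that termination still holds once $z$-letters are commuted to the left past the letters of $X^{\pm1}$. If the bookkeeping of where $z$ sits relative to $t$ caused trouble, I would fall back on a direct definition $\mathcal{R}_B=\{(tz^{\pm1},y^{\pm1}t)\}\cup\mathcal{R}'_{\langle c_B\rangle}$. I would then verify the three compatibility axioms by hand: irreducible words $t u$ have $u$ free of $z$ and irreducible for the free-group compatible system, so they give distinct coset representatives.
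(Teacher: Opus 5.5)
Your proposal is correct and is essentially the paper's proof. The paper also takes the free-reduction system for $\pi_1(S)$, compatible with the boundary subgroups, and then applies \cref{prop:prod}; it cites \cref{thm:main_free} for the compatibility step, but \cref{thm:compatible_free}, which you use, is the result actually needed. One cosmetic point, inherited from the statement of \cref{prop:prod}: your displayed system for $\Z\times F$ should also contain the rules $(zz^{-1},\epsilon)$ and $(z^{-1}z,\epsilon)$, since without them the presented monoid is $\{z,z^{-1}\}^*\times F$ rather than a group. Adding these rules does not affect your termination check or your coset-representative check.
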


\begin{proof}
Since $S$ has boundary, $\pi_1(S)$ is free. By \cref{thm:main_free}, $\pi_1(S)$ admits a finite complete rewriting system $\mathcal{R}$ (over a generating set $X$) compatible with $\{\pi_1(B)\}_{B\subset \partial S}$. Combined with \cref{prop:prod}, we see that $G$ admits a finite complete rewriting system compatible with $\{\pi_1(B\times S^1)\}_{B\subset \partial S}$.
\end{proof}

\begin{theorem}
\label{thm:graph_manifold}
If $M_1, \ldots, M_n$ are closed graph manifolds, then $(*_{i=1}^n\pi_1(M_i))*\Z$ admits a finite complete rewriting system.
\end{theorem}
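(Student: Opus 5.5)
Using \cref{lem:product}, I will pass to finite covers in which every graph manifold decomposes into products $S\times S^1$. Then I will apply \cref{prop:producting} to the resulting graph of groups and descend back along a finite-index subgroup via \cref{prop:finite_index}. The main difficulty is the bookkeeping that ensures a single extra free factor of rank one suffices.

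\textbf{Step 1: the covers.} By \cref{lem:product}, each $M_i$ has a finite sheeted cover $N_i$ whose JSJ pieces are homeomorphic to $S_{ij}\times S^1$ with $S_{ij}$ compact orientable. If $M_i$ is itself Seifert fibred with no JSJ tori, I handle it separately. Such manifolds are geometric, so $\pi_1(M_i)$ admits a finite complete rewriting system by Hermiller--Shapiro \cite{Hermiller1999}; by \cref{lem:join} these free factors cause no trouble. I therefore assume every piece has nonempty boundary, so $S_{ij}$ has boundary.

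\textbf{Step 2: the graph of groups.} $\pi_1(N_i)$ is the fundamental group of a finite graph of groups $\mathcal{G}_i$. Its vertex groups are $\pi_1(S_{ij}\times S^1)$ and its edge groups are $\Z^2$, each embedded as some $\pi_1(B\times S^1)$ with $B\subset\partial S_{ij}$. By \cref{prop:seifert}, each vertex group admits a finite complete rewriting system compatible with its incident edge groups. One caveat: \cref{prop:seifert} relies on \cref{thm:main_free}, but what is really needed is \cref{thm:compatible_free} together with \cref{prop:prod}, and that is what I will cite. Taking the disjoint union of the $\mathcal{G}_i$, or joining them along trivial edges, gives a finite graph of groups $\mathcal{G}$. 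Its fundamental group is $*_i\pi_1(N_i)$, possibly together with a free factor coming from connecting edges. By \cref{prop:producting}, $F*(*_{k=1}^m \pi_1(\mathcal{G}))$ admits a finite complete rewriting system for every $m\geqslant1$, where $\rk F=|V|-1$.

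\textbf{Step 3: descending to $(*_i\pi_1(M_i))*\Z$.} Set $G=(*_i\pi_1(M_i))*\Z$. I need a finite index subgroup of $G$ isomorphic to something Step 2 covers. Let $d$ be a common multiple of the degrees $[\pi_1(M_i):\pi_1(N_i)]$; after passing to further covers, I may assume all these degrees equal $d$. Consider the homomorphism $G\to \Z/d\Z$ killing every $\pi_1(M_i)$ and sending the generator of $\Z$ to $1$. Its kernel is $(*_{s=0}^{d-1}(*_i\pi_1(M_i))^{z^s})*\Z$, which has index $d$. This alone does not reach the $N_i$. Instead I will use the Kurosh subgroup theorem. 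Take a finite quotient $\phi\colon G\to Q$ that restricts on each $\pi_1(M_i)$ to a map with kernel contained in $\pi_1(N_i)$, chosen using residual finiteness and normal cores. Then $\ker\phi$ is a free product of conjugates of finite-index subgroups of the $\pi_1(N_i)$, together with a free group. By the Euler characteristic count, this free group has rank $1+|Q|\bigl(1-\sum_i 1/[\,\cdot\,]\bigr)$-ish, which is large for $|Q|$ large. Each finite-index subgroup of $\pi_1(N_i)$ is again a graph manifold group of product type, since products $S\times S^1$ lift to products. Hence $\ker\phi$ has the form $F_r*(*_k\pi_1(\mathcal{G}'_k))$.

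\textbf{Expected obstacle.} The main obstacle is matching the free rank $r$ exactly to $|V|-1$ from \cref{prop:producting}. My plan is to absorb any surplus free rank by adding vertices with trivial vertex groups, which are trivially compatible. Each such vertex raises $|V|$ by one and adds no relations. This lets me increase the rank required by \cref{prop:producting} until it equals $r$, as long as the free rank $r$ in $\ker\phi$ is at least $|V|-1$ for the original graph. Ensuring that this inequality holds, by enlarging $Q$, is the delicate counting step. Once it holds, \cref{prop:finite_index}(1) yields a finite complete rewriting system for $G$.
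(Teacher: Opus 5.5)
Your outline matches the paper's: product-type covers $N_i$, compatible systems from \cref{thm:compatible_free} and \cref{prop:prod}, then descent along a finite-index subgroup. Treating Seifert fibred $M_i$ separately via Hermiller--Shapiro is a sensible precaution.

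\textbf{The gap.} The gap is the step you flag yourself: the inequality $r\geqslant |V|-1$ is never proved, and with your choice of $\phi$, enlarging $Q$ does not give it.
\begin{enumerate}
\item You only require $\ker\phi\cap\pi_1(M_i)\leqslant\pi_1(N_i)$. So the non-free Kurosh factors of $\ker\phi$ are finite covers of the $N_i$, not copies of $\pi_1(\mathcal{G})$.
\item Hence \cref{prop:producting} applied to your original $\mathcal{G}$ is not the relevant statement. The graph of groups you actually need has one vertex for every JSJ piece of every such factor.
\item That vertex count grows with $|Q|$ just as $r$ does. Write $a_i=|\phi(\pi_1(M_i))|$ and $b_P=|\phi(\pi_1(P))|$ for the JSJ pieces $P$ of $M_i$. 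Then $r=1+|Q|\sum_i(1-1/a_i)$ (your formula should read this way), while the number of pieces is $|Q|\sum_i\sum_{P\subseteq M_i}1/b_P$.
\end{enumerate}
So $r\geqslant|V|-1$ is a condition on ratios, not something that enlarging $Q$ secures. It fails, for example, whenever $\sum_{P\subseteq M_i}1/b_P\geqslant 1$ for each $i$ and $|Q|\sum_i 1/a_i>2$.

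\textbf{The missing idea.} You need to keep the vertex count fixed while only the free rank grows; this is exactly what the copies in \cref{prop:producting} are for. One way to do it within your framework:
\begin{enumerate}
\item Fix a surjection $\phi_0\colon *_i\pi_1(M_i)\to P$ onto a finite group, with $\ker\phi_0\cap\pi_1(M_i)\leqslant\pi_1(N_i)$ and $\phi_0$ nontrivial on some factor.
\item Define $\phi\colon G\to P\times\Z/m\Z$ by $\phi_0$ on each $\pi_1(M_i)$, sending the generator of $\Z$ to $(1,1)$.
\item Then the intersections $\ker\phi\cap\pi_1(M_i)$ do not depend on $m$. The non-free part of $\ker\phi$ is $m$ copies of one fixed $\pi_1(\mathcal{G}'')$, and $r=1+m|P|\sum_i(1-1/|\phi_0(\pi_1(M_i))|)$ grows linearly in $m$.
\item For $m$ large, \cref{prop:producting}, \cref{lem:join} and \cref{prop:finite_index}(1) finish the argument.
\end{enumerate}

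\textbf{How the paper does it.} The paper is more economical. It takes a proper finite-index subgroup $H\leqslant *_i\pi_1(M_i)$ that splits as a graph of groups satisfying the hypotheses of \cref{thm:combination}. It gets $H*F_2$ from \cref{cor:combination2}, which already packages the copies trick with no dependence on $|V|$. It then applies \cref{prop:finite_index}(2): $(*_i\pi_1(M_i))*\Z$ contains $H*F_d$ with finite index, where $d=[*_i\pi_1(M_i):H]\geqslant 2$.
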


\begin{proof}
By \cref{lem:product} combined with \cref{prop:seifert} and \cref{lem:join}, $G = *_{i=1}^n\pi_1(M_i)$ has a finite index subgroup $H$ isomorphic to a finite graph of groups satisfying the hypotheses of \cref{thm:combination}. After passing to another finite index subgroup, we may assume that $H$ is proper (note that $G$ is residually finite). Thus, by \cref{cor:combination2}, $H*F_2$ admits a finite complete rewriting system. Using the fact that $[G:H]\geqslant 2$, we may use \cref{prop:finite_index} to complete the proof.
\end{proof}

\subsection{The general case}

Before proving \cref{thm:3-manifold:intro}, we prove a conditional statement without requiring the free factor.

\begin{proposition}
\label{prop:conditional}
Suppose that $M^3$ is a three-manifold with $G = \pi_1(M^3)$ finitely generated and so that every finitely generated subgroup $H\leqslant \pi_1(M^3)$ isomorphic to the fundamental group of a closed graph manifold that does not virtually fibre admits a finite complete rewriting system. Then $\pi_1(M^3)$ admits a finite complete rewriting system.
\end{proposition}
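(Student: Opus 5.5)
We reduce to a free product of pieces and then handle each piece using results already in hand. First, $M^3$ may be noncompact, but since $G=\pi_1(M^3)$ is finitely generated, Scott's core theorem gives a compact three-manifold $N$ with $\pi_1(N)\cong G$. We replace $M^3$ by $N$.

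Next we split $N$ into pieces. By the Kneser--Milnor prime decomposition, $G$ is a free product of fundamental groups of prime compact three-manifolds, and these groups are again subgroups of $G$. We cap off sphere boundary components first, so that none remain. By \cref{lem:join}, having a finite complete rewriting system is closed under free products, so it suffices to treat each prime factor $P$. The case analysis on $P$ is as follows.
\begin{itemize}
\item If $P$ is $S^1\times S^2$ or $S^1\tilde\times S^2$, then $\pi_1(P)$ is $\Z$ or virtually $\Z$. These groups have finite complete rewriting systems, using \cref{prop:finite_index} in the twisted case.
\item If $P$ is irreducible with compressible boundary, we apply the loop theorem and Dehn's lemma repeatedly along compressing discs. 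This writes $\pi_1(P)$ as a free product of free groups and fundamental groups of compact irreducible manifolds with incompressible boundary. The process terminates by a complexity argument, and the resulting factors are again finitely generated subgroups of $G$.
\item Any non-orientable factor we would like to pass to its orientation double cover, via \cref{prop:finite_index}(1). However, \cref{prop:finite_index}(1) transfers rewriting systems from a finite index subgroup up to $G$, which is the direction we need. So it is cleaner to apply \cref{prop:irreducible} directly, since it does not assume orientability.
\end{itemize}

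We are thus reduced to a compact irreducible $P$ with empty or incompressible boundary. If $P$ is not a closed graph manifold that fails to virtually fibre, \cref{prop:irreducible} gives a finite complete rewriting system for $\pi_1(P)$. Otherwise $\pi_1(P)$ is a finitely generated subgroup of $G$ isomorphic to the fundamental group of a closed graph manifold that does not virtually fibre. By the hypothesis, it then admits a finite complete rewriting system. Recombining the factors with \cref{lem:join} proves the claim.

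The main obstacle is the bookkeeping in the reduction. We must check that every piece produced is genuinely a subgroup of $G$; this holds because free factors embed, including in the free splittings coming from compressing discs. We must also check that the boundary-compression step ends with pieces whose boundary is incompressible and contains no spheres, so that \cref{prop:irreducible} applies. Finally, non-orientable or $P^2$-irreducible subtleties such as $\mathbb{RP}^2$ boundary components need care. We would handle these via the standard statement that $\pi_1$ of a compact three-manifold is a free product of $\Z$'s, groups of closed or incompressibly bounded irreducible manifolds, and finite groups. Finite groups trivially have finite complete rewriting systems, using the full multiplication table.
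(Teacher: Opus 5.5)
Your skeleton matches the paper's: reduce to a free product of a free group and groups of compact irreducible manifolds with incompressible boundary, treat each factor by \cref{prop:irreducible} or by the hypothesis, and recombine with \cref{lem:join}. The difference is that the paper first passes to the finite-index subgroup $F*(*_{i=1}^n\pi_1(M_i))$ supplied by \cite[Proposition 2.2]{KL24b}, and only at the end climbs back to $G$ with \cref{prop:finite_index}(1). You drop that finite-index step, and that is where the gap is.

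You apply \cref{prop:irreducible} to possibly non-orientable irreducible pieces because its statement does not mention orientability. Its argument needs it, though. It starts from $\pi_2=0$, which the sphere theorem gives only for orientable (or $P^2$-irreducible) manifolds. Its closed case also relies on the virtual fibring results \cite{WY97,Ag13,PW18}, which are for orientable aspherical manifolds. For example, $\mathbb{RP}^2\times S^1$ is irreducible with $\pi_2\cong\Z$. Likewise $\mathbb{RP}^2\times I$ is irreducible with incompressible boundary, yet its fundamental group $\Z/2$ has infinite cohomological dimension. So the step ``aspherical spine, hence cohomological dimension at most two, hence virtually free-by-cyclic'' fails. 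Your closing appeal to a ``standard statement'' does not repair this. It disposes of finite factors, but its irreducible factors can still be non-orientable manifolds with two-sided projective planes and infinite fundamental group, where the argument of \cref{prop:irreducible} does not go through.

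The repair is the move you considered and then discarded. \cref{prop:finite_index}(1) transfers a finite complete rewriting system from a finite-index subgroup \emph{up} to the overgroup, which is exactly the direction you need. So if $N$ is non-orientable, replace it by its orientation double cover $\widetilde N$, and run the prime decomposition and disc compressions there. They must be redone, since $\widetilde N$ can be reducible even when $N$ is irreducible, e.g.\ $S^2\times S^1\to\mathbb{RP}^2\times S^1$.

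Every resulting factor is then orientable, so \cref{prop:irreducible} genuinely applies. Every factor is also a finitely generated subgroup of $\pi_1(\widetilde N)\leqslant G$, so the hypothesis still covers the closed graph manifold factors that do not virtually fibre. Then \cref{lem:join} gives a system for $\pi_1(\widetilde N)$, and \cref{prop:finite_index}(1) gives one for $G$. With this change your argument becomes a hands-on version of the finite-index reduction the paper imports from \cite{KL24b}.
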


\begin{proof}
By \cite[Proposition 2.2]{KL24b}, $G$ has a finite index subgroup isomorphic to $F*(*_{i=1}^n\pi_1(M_i))$, where $F$ is a finitely generated free group and where each $M_i$ is a compact irreducible three-manifold with (possibly empty) incompressible boundary. Then combining \cref{lem:join}, \cref{prop:finite_index} and \cref{prop:irreducible} completes the proof.
\end{proof}

\begin{proof}[Proof of \cref{thm:3-manifold:intro}]
By \cite[Proposition 2.2]{KL24b}, $G = \pi_1(M)$ has a finite index subgroup isomorphic to $F*(*_{i=1}^n\pi_1(M_i))$, where $F$ is a finitely generated free group and where each $M_i$ is a compact irreducible three-manifold with (possibly empty) incompressible boundary. Then combining \cref{lem:join}, \cref{prop:finite_index}, \cref{prop:irreducible} and \cref{thm:graph_manifold} completes the proof.
\end{proof}

We may now also prove \cref{thm:autostackable:intro} from the introduction.

\begin{proof}[Proof of \cref{thm:autostackable:intro}]
By \cite[Theorem 3.4]{Brittenham2016}, being autostackable passes to finite index overgroups. By \cite[Theorem 3.2]{Brittenham2016}, free products of autostackable groups are autostackable. By \cite[Theorem 6.2]{Hermiller2018}, closed orientable irreducible three-manifold groups are autostackable. Since groups with finite complete rewriting systems are autostackable by \cite[Corollary 5.4]{Brittenham2014}, we may use \cref{prop:irreducible} combined with the above to complete the proof in the same way as that of \cref{thm:3-manifold:intro}. 
\end{proof}

\section{Concluding remarks and open questions} 

We end the paper with some remarks on open questions. One obvious lingering question is the following:

\begin{question}
Does \cref{thm:combination} hold if $F(\mathcal{G}, \Gamma)$ is replaced with $\pi_1(\mathcal{G})$?
\end{question}

It remains open whether all one-relator groups admit finite complete rewriting systems (although they do not, in general, admit finite complete rewriting systems over the given generating set \cite{jantzen1985note}). It is open even for the Baumslag--Gersten group \cite{hermiller2018hnn}. We also do not know if all one-relator groups are autostackable. If either of these results were true it would give a rational set of normal forms. This prompts us to ask:   

\begin{question}
Does every one-relator group have a rational cross-section?     
\end{question}

We do not know whether the $\Z$ factor in \cref{thm:3-manifold:intro}, or in \cref{cor:FreeProdAmalg}, can be dropped. More generally, we ask:

\begin{question}
If $G*\Z$ admits a finite complete rewriting system, does $G$ admit a finite complete rewriting system?
\end{question}

This is a simpler instance of the general question for free products, asked by Otto (see \cite{Pride_Wang_2000}).


\begin{thebibliography}{10}

\bibitem{AdamsGrobner}
W.~W. Adams and P.~Loustaunau.
\newblock {\em An introduction to {G}r\"obner bases}, volume~3 of {\em Graduate
  Studies in Mathematics}.
\newblock American Mathematical Society, Providence, RI, 1994.

\bibitem{Ag13}
I.~Agol.
\newblock The virtual {H}aken conjecture.
\newblock {\em Doc. Math.}, 18:1045--1087, 2013.
\newblock With an appendix by Agol, Daniel Groves, and Jason Manning.

\bibitem{AGS}
D.~J. Anick.
\newblock On the homology of associative algebras.
\newblock {\em Trans. Amer. Math. Soc.}, 296(2):641--659, 1986.

\bibitem{AFW15}
M.~Aschenbrenner, S.~Friedl, and H.~Wilton.
\newblock {\em 3-manifold groups}.
\newblock EMS Series of Lectures in Mathematics. European Mathematical Society
  (EMS), Z\"{u}rich, 2015.

\bibitem{behrstock2008quasi}
J.~A. Behrstock and W.~D. Neumann.
\newblock Quasi-isometric classification of graph manifold groups.
\newblock {\em Duke Math. J.}, 141(2):217--240, 2008.

\bibitem{BF91}
M.~Bestvina and M.~Feighn.
\newblock Bounding the complexity of simplicial group actions on trees.
\newblock {\em Inventiones mathematicae}, 103(3):449--469, 1991.

\bibitem{Bodart2024}
C.~Bodart.
\newblock Rational cross-sections, bounded generation, and orders on groups.
\newblock {\em J. Lond. Math. Soc. (2)}, 109(6):e12920, 2024.

\bibitem{bridson1993combings}
M.~R. Bridson.
\newblock Combings of semidirect products and 3-manifold groups.
\newblock {\em Geom. Funct. Anal.}, 3:263--278, 1993.

\bibitem{BG1996}
M.~R. Bridson and R.~H. Gilman.
\newblock Formal language theory and the geometry of 3-manifolds.
\newblock {\em Comment. Math. Helv.}, 71(1):525--555, 1996.

\bibitem{brittenham2015uniform}
M.~Brittenham and S.~Hermiller.
\newblock A uniform model for almost convexity and rewriting systems.
\newblock {\em J. Group Theory}, 18(5):805--828, 2015.

\bibitem{Brittenham2014}
M.~Brittenham, S.~Hermiller, and D.~Holt.
\newblock Algorithms and topology of {C}ayley graphs for groups.
\newblock {\em J. Algebra}, 415:112--136, 2014.

\bibitem{Brittenham2016}
M.~Brittenham, S.~Hermiller, and A.~Johnson.
\newblock Homology and closure properties of autostackable groups.
\newblock {\em J. Algebra}, 452:596--617, 2016.

\bibitem{Hermiller2018}
M.~Brittenham, S.~Hermiller, and T.~Susse.
\newblock Geometry of the word problem for 3-manifold groups.
\newblock {\em J. Algebra}, 499:111--150, 2018.

\bibitem{Chouraqui2009}
F.~Chouraqui.
\newblock Rewriting systems in alternating knot groups with the {D}ehn
  presentation.
\newblock {\em Geom. Dedicata}, 138:173--192, 2009.

\bibitem{CM99}
D.~J. Collins and C.~F. Miller.
\newblock The word problem in groups of cohomological dimension 2.
\newblock In {\em Groups St. Andrews 1997 in Bath. Selected papers of the
  international conference, Bath, UK, July 26--August 9, 1997. Vol. 1}, pages
  211--218. Cambridge: Cambridge University Press, 1999.

\bibitem{Dekov1997}
D.~V. Dekov.
\newblock Finite complete rewriting systems for groups.
\newblock {\em Comm. Algebra}, 25(12):4023--4028, 1997.

\bibitem{Diekert2010}
V.~Diekert, A.~J. Duncan, and A.~G. Myasnikov.
\newblock Geodesic rewriting systems and pregroups.
\newblock In {\em Combinatorial and geometric group theory}, Trends Math.,
  pages 55--91. Birkh\"{a}user/Springer Basel AG, Basel, 2010.

\bibitem{DrutuSapir2005}
C.~Drutu and M.~Sapir.
\newblock Non-linear residually finite groups.
\newblock {\em Journal of Algebra}, 284(1):174--178, 2005.

\bibitem{Feighn1999}
M.~Feighn and M.~Handel.
\newblock Mapping tori of free group automorphisms are coherent.
\newblock {\em Ann. of Math. (2)}, 149(3):1061--1077, 1999.

\bibitem{fisher2026cohomological}
S.~P. Fisher.
\newblock On the cohomological dimension of kernels of maps to $\mathbb{Z}$.
\newblock {\em Geom. Topol.}, 30(1):373--388, 2026.

\bibitem{fisher2026coherent}
S.~P. Fisher, M.~Linton, and P.~S{\'a}nchez-Peralta.
\newblock Coherent {RFRS} groups.
\newblock {\em arXiv preprint arXiv:2603.16763}, 2026.

\bibitem{Gersten}
S.~M. Gersten.
\newblock Subgroups of word hyperbolic groups in dimension 2.
\newblock {\em J. London Math. Soc. (2)}, 54(2):261--283, 1996.

\bibitem{Grigorchu1997}
R.~Grigorchuk and T.~Nagnibeda.
\newblock Complete growth functions of hyperbolic groups.
\newblock {\em Invent. Math.}, 130(1):159--188, 1997.

\bibitem{Groves1993}
J.~R.~J. Groves and G.~C. Smith.
\newblock Soluble groups with a finite rewriting system.
\newblock {\em Proc. Edinburgh Math. Soc. (2)}, 36(2):283--288, 1993.

\bibitem{Grunschlag1999}
Z.~Grunschlag.
\newblock Computing angles in hyperbolic groups.
\newblock In {\em Groups, languages and geometry ({S}outh {H}adley, {MA},
  1998)}, volume 250 of {\em Contemp. Math.}, pages 59--88. Amer. Math. Soc.,
  Providence, RI, 1999.

\bibitem{HW2010}
M.~F. Hagen and D.~T. Wise.
\newblock Special groups with an elementary hierarchy are virtually
  free-by-$\mathbb{Z}$.
\newblock {\em Groups Geom. Dyn.}, 4(3):597--603, 2010.

\bibitem{hermiller2018hnn}
S.~Hermiller and C.~Mart{\'\i}nez-P{\'e}rez.
\newblock {HNN} extensions and stackable groups.
\newblock {\em Groups, Geometry, and Dynamics}, 12(3):1123--1158, 2018.

\bibitem{Hermiller1995}
S.~Hermiller and J.~Meier.
\newblock Algorithms and geometry for graph products of groups.
\newblock {\em J. Algebra}, 171(1):230--257, 1995.

\bibitem{hermiller2001measuring}
S.~Hermiller and J.~Meier.
\newblock Measuring the tameness of almost convex groups.
\newblock {\em Trans. Amer. Math. Soc.}, 353(3):943--962, 2001.

\bibitem{Hermiller1999}
S.~Hermiller and M.~Shapiro.
\newblock Rewriting systems and geometric three-manifolds.
\newblock {\em Geom. Dedicata}, 76(2):211--228, 1999.

\bibitem{hermiller1994rewriting}
S.~M. Hermiller.
\newblock Rewriting systems for {C}oxeter groups.
\newblock {\em J. Pure Appl. Algebra}, 92(2):137--148, 1994.

\bibitem{Hermiller1999Second}
S.~M. Hermiller and J.~Meier.
\newblock Artin groups, rewriting systems and three-manifolds.
\newblock {\em J. Pure Appl. Algebra}, 136(2):141--156, 1999.

\bibitem{Higgins1976}
P.~J. Higgins.
\newblock The fundamental groupoid of a graph of groups.
\newblock {\em J. London Math. Soc. (2)}, 13(1):145--149, 1976.

\bibitem{HoltBook}
D.~F. Holt, B.~Eick, and E.~A. O'Brien.
\newblock {\em Handbook of computational group theory}.
\newblock Discrete Mathematics and its Applications (Boca Raton). Chapman \&
  Hall/CRC, Boca Raton, FL, 2005.

\bibitem{JZL2025}
A.~Jaikin-Zapirain and M.~Linton.
\newblock On the coherence of one-relator groups and their group algebras.
\newblock {\em Ann. of Math. (2)}, 201(3):909--959, 2025.

\bibitem{jantzen1985note}
M.~Jantzen.
\newblock A note on a special one-rule semi-thue system.
\newblock {\em Inform. Process. Lett.}, 21(3):135--140, 1985.

\bibitem{KM02}
I.~Kapovich and A.~Myasnikov.
\newblock Stallings foldings and subgroups of free groups.
\newblock {\em J. Algebra}, 248(2):608--668, 2002.

\bibitem{KL24b}
D.~Kielak and M.~Linton.
\newblock Virtually free-by-cyclic groups.
\newblock {\em Geom. Funct. Anal.}, 34(5):1580--1608, 2024.

\bibitem{LeChenadec1986}
P.~Le~Chenadec.
\newblock A catalogue of complete group presentations.
\newblock {\em J. Symbolic Comput.}, 2(4):363--381, 1986.

\bibitem{linton2025geometry}
M.~Linton.
\newblock The geometry of subgroups of mapping tori of free groups.
\newblock {\em arXiv preprint arXiv:2510.03145}, 2025.

\bibitem{Li26}
M.~Linton.
\newblock Embedding finitely generated free-by-cyclic groups in
  {{\(\{\)}}finitely generated free{{\(\}\)}}-by-cyclic groups.
\newblock {\em Int. Math. Res. Not.}, 2026(4):15, 2026.
\newblock Id/No rnag020.

\bibitem{linton2025theory}
M.~Linton and C.-F. Nyberg-Brodda.
\newblock The theory of one-relator groups: history and recent progress.
\newblock {\em arXiv preprint arXiv:2501.18306}, 2025.

\bibitem{LYQ97}
J.~Luecke and Y.-Q. Wu.
\newblock Relative {E}uler number and finite covers of graph manifolds.
\newblock In {\em Geometric topology ({A}thens, {GA}, 1993)}, volume 2.1 of
  {\em AMS/IP Stud. Adv. Math.}, pages 80--103. Amer. Math. Soc., Providence,
  RI, 1997.

\bibitem{Otto1997}
F.~Otto and Y.~Kobayashi.
\newblock Properties of monoids that are presented by finite convergent
  string-rewriting systems---a survey.
\newblock In {\em Advances in algorithms, languages, and complexity}, pages
  225--266. Kluwer Acad. Publ., Dordrecht, 1997.

\bibitem{Otto1984}
F.~Otto.
\newblock Finite complete rewriting systems for the {J}antzen monoid and the
  {G}reendlinger group.
\newblock {\em Theoret. Comput. Sci.}, 32(3):249--260, 1984.

\bibitem{papasoglu2002quasi}
P.~Papasoglu and K.~Whyte.
\newblock Quasi-isometries between groups with infinitely many ends.
\newblock {\em Commentarii Mathematici Helvetici}, 77(1):133--144, 2002.

\bibitem{Pride_Wang_2000}
S.~J. Pride and J.~Wang.
\newblock Subgroups of finite index in groups with finite complete rewriting
  systems.
\newblock {\em Proc. Edinburgh Math. Soc. (2)}, 43(1):177–183, 2000.

\bibitem{PW18}
P.~Przytycki and D.~T. Wise.
\newblock Mixed 3-manifolds are virtually special.
\newblock {\em J. Amer. Math. Soc.}, 31(2):319--347, 2018.

\bibitem{Serre}
J.-P. Serre.
\newblock {\em Trees}.
\newblock Springer Monographs in Mathematics. Springer-Verlag, Berlin, 2003.
\newblock Translated from the French original by John Stillwell, Corrected 2nd
  printing of the 1980 English translation.

\bibitem{St83}
J.~R. Stallings.
\newblock Topology of finite graphs.
\newblock {\em Invent. Math.}, 71(3):551--565, 1983.

\bibitem{WY97}
S.~Wang and F.~Yu.
\newblock Graph manifolds with non-empty boundary are covered by surface
  bundles.
\newblock {\em Math. Proc. Cambridge Philos. Soc.}, 122(3):447--455, 1997.

\bibitem{WiseWhatsNext}
D.~T. Wise.
\newblock {\em An Invitation to Coherent Groups}, pages 326--414.
\newblock Princeton University Press, Princeton, 2020.

\end{thebibliography}

\end{document}